\newcommand{\N}{\mathbb{N}}
\def\bt{\beta}
\def\rank{\operatorname{rank}}
\newcommand{\reff}[1]{(\ref{#1})}
\newcommand{\bnum}{\begin{enumerate}}
\newcommand{\enum}{\end{enumerate}}
\newcommand{\bit}{\begin{itemize}}
\newcommand{\eit}{\end{itemize}}
\newcommand{\be}{\begin{equation}}
\newcommand{\ee}{\end{equation}}
\newcommand{\baray}{\begin{array}}
\newcommand{\earay}{\end{array}}
\newcommand{\bca}{\begin{cases}}
\newcommand{\eca}{\end{cases}}
\newcommand{\bcen}{\begin{center}}
\newcommand{\ecen}{\end{center}}
\newcommand{\bbm}{\begin{bmatrix}}
\newcommand{\ebm}{\end{bmatrix}}
\newcommand{\bpm}{\begin{pmatrix}}
\newcommand{\epm}{\end{pmatrix}}
\newcommand{\btab}{\begin{tabular}}
\newcommand{\etab}{\end{tabular}}
\newtheorem{theorem}{Theorem}[section]
\newtheorem{lemma}[theorem]{Lemma}
\newtheorem{defi}[theorem]{Definition}
\newtheorem{remark}[theorem]{Remark}
\newcommand{\bm}[1]{{\mbox{\boldmath $#1$}}}
\newcommand*{\R}{\mathbb{R}}
\def\beq#1{\begin{equation}\label{#1}}
\def\eeq{\end{equation}}
\def\bep{\begin{proof}}
\def\ep{\end{proof}}
\def\bt{\begin{theorem}}
\def\et{\end{theorem}}
\def\bl{\begin{lemma}}
\def\el{\end{lemma}}
\def\reff#1{(\ref{#1})}
\def\ignore#1{}
\def\bea#1{\begin{array}{#1}}
\def\ea{\end{array}}
\begin{document}
\setcounter{page}{1}

\title{Efficient Low Rank Matrix Recovery With Flexible Group Sparse Regularization\footnote{*Corresponding author: Xinzhen Zhang}}
%Inexact Restarted Augmented Lagrangian Algorithm For Fast Low Rank Matrix Recovery With Flexible Group Sparse Regularization

\author{Quan Yu, Minru Bai, Xinzhen Zhang*}
\address{School of Mathematics, Hunan University, Changsha 410082, China.}
\email{quanyu@hnu.edu.cn, minru-bai@hnu.edu.cn}

\address{School of Mathematics, Tianjin University, Tianjin 300350, China.
} \email{xzzhang@tju.edu.cn}

\begin{abstract}
	 In this paper, we present a novel approach to the low rank matrix recovery (LRMR) problem by casting it as a group sparsity problem. Specifically, we propose a flexible group sparse regularizer (FLGSR) that can group any number of matrix columns as a unit, whereas existing methods group each column as a unit. We prove the equivalence between the matrix rank and the FLGSR under some mild conditions, and show that the LRMR problem with either of them has the same global minimizers. We also establish the equivalence between the relaxed and the penalty formulations of the LRMR problem with FLGSR. We then propose an inexact restarted augmented Lagrangian method, which solves each subproblem by an extrapolated linearized alternating minimization method. We analyze the convergence of our method. Remarkably, our method linearizes each group of the variable separately and uses the information of the previous groups to solve the current group within the same iteration step. This strategy enables our algorithm to achieve fast convergence and high performance, which are further improved by the restart technique.  Finally, we conduct numerical experiments on both grayscale images and high altitude aerial images to confirm the superiority of the proposed FLGSR and algorithm.
	%The paper foucs on  low rank matrix recovery problem based on group sparsity reformulation. Firstly, we formulate the low rank matrix recovery problem as a group sparse optimizaiton problem. We further use a penalty method to reconsider the constraints in the group sparse optimization problem.  We give exact continuous relaxation problems for the reformulated group sparse optimization problem and its penalty problem. Secondly, we propose a smoothing penalty algorithm, combining with the accelarated proximal gradient algorithm, to solve the group sparse problem. We show that the algortihm converges to a $d$-stationary point.  Finally, simulation results illustrate the superiority of the group sparse formulation and the proposed algorithm.
\end{abstract}

\keywords{Low rank matrix recovery, flexible group sparse optimization, capped folded concave function}

\subjclass[2020]{}

\maketitle

\section{Introduction}
The recovery of an unknown low rank matrix $ C \in \R^{m\times n} $ from very limited information has arisen in many applications, such as optimal control \cite{FHB04,FHB01}, image classification \cite{ClTCB15,LLTX15}, multi-task learning \cite{ABEV06,AFSU07}, image inpainting \cite{Kom,MLH17,YZ22}, and so on. The problem is formulated as the following low rank matrix recovery (LRMR) problem:
\begin{equation}\label{Q:p}
\mathop{\min} \limits_{C \in \R^{m\times n}} \rank(C), \quad  \mbox{\rm s.t.} \quad \left\|\mathscr{A}\left(C \right)-\bm{b}\right\|_2\le\sigma,  		
\end{equation}
where $ C $ is the decision variable, and the linear transformation $\mathscr{A}:\R^{m\times n} \to \R^{p}$ and vector $ \bm{b} \in \R^p $ are given.

Problem \reff{Q:p} is NP-hard because of the combinatorial property of the rank function. To solve problem \reff{Q:p}, the rank function is relaxed by various spectral functions, such as the nuclear norm, the truncated nuclear norm, the Schatten-$q$ quasi-norm, etc.
Under mild conditions, the low rank matrix can be exactly recovered from most sampled entries by minimizing the nuclear norm of the matrix \cite{CR09}. Therefore, the nuclear norm minimization has been widely studied for LRMR problem \cite{CR09,CCS10,RFP10,MGC11}, which leads to a convex optimization problem. Numerical methods for nuclear norm minimization problem have strong theoretical guarantees under  some conditions, which cannot be satisfied in some practical applications \cite{CR09,CT10}. In other words, the nuclear norm is not the best approximation of the rank function. In this regard, the nonconvex relaxations are tighter than the nuclear norm relaxation to the rank function. Some popular nonconvex relaxation include truncated nuclear norm \cite{DXG18,LL16,SWKT19}, capped-$ l_1 $ function \cite{YZ22}, truncated $ l_{1-2} $ metric \cite{MLH17} and Schatten-$q$ quasi-norm \cite{MS12,NWC12}. Note that all of these methods have to compute singular value decompositions (SVD) in each iteration, which leads to high computational cost. 
To cut down the computation cost and running time, \cite{EvdH12,KMO10,WYZ12,XYWZ12} adopted a low rank matrix factorization to preserve the low rank structure of matrix such that \reff{Q:p} is relaxed as
\begin{equation}\label{eq:fac}
	\mathop{\min} \limits_{X\in\R^{m\times r},Y\in\R^{n\times r},C\in\R^{m\times n}} \frac{1}{2}\left\|XY^T-C\right\|_F^2, \quad  \mbox{\rm s.t.} \quad \left\|\mathscr{A}\left(C \right)-\bm{b}\right\|_2\le\sigma,  		
\end{equation}
where $ r $ is a preestimated matrix rank. For problem \reff{eq:fac}, the running time of numerical methods is cut down dramatically while the performance of the methods based on matrix factorization is not satisfactory \cite{FDCU19,YKWL19}.

Recently, group sparse regularizer as a surrogate for the matrix rank has attracted more and more interest as it scales well to large-scale problems. For example, Fan et al. \cite{FDCU19} proposed a new class of factor group-sparse regularizers (FGSR) as a surrogate for the matrix rank. To solve the matrix recovery problem associated with the proposed FGSR, they used the alternating direction method of multipliers (ADMM) with linearization. Jia et al. \cite{JFWZ21} proposed a generalized unitarily invariant gauge (GUIG) function for LRMR problem and solved it using the accelerated block prox-linear (ABPL) algorithm.
Although the above group sparse regularizer based matrix recovery methods have achieved satisfactory performance, they still suffer from the following limitations: 1) They treated each column of the matrix as a group and designed their algorithms to linearize the entire matrix instead of each column, which would otherwise dramatically increase the running time as the matrix size grows. However, a drawback of this approach is that they could not use the information of the previous groups to solve the current group in the same iteration step. This resulted in poor solution quality. 2) Although they established the relationship between the proposed group sparse regularizer and the matrix rank, they lacked the equivalence analysis of the related problem.

%Such two concerns are not well solved in existing group sparse regularizer based LRMR method. 
In this paper, we introduce a flexible group sparse regularizer (FLGSR) for the LRMR problem. The proposed FLGSR is partly based on the group sparse regularizer studied in \cite{FDCU19,JFWZ21}, but generalizes it to the flexible group setting. Our method, based on FLGSR, outperforms spectral functions, matrix decomposition, and previous group sparse regularizers in performance and efficiency for large-scale problems. It avoids computing SVD, estimating matrix rank, and allows flexible column grouping. In summary, our contributions can be summarized as follows:
\begin{itemize}
	\item[(1)] We prove that the matrix rank can be formulated equivalently as a FLGSR under some simple conditions. Our FLGSR model is more flexible than the FGSR model proposed in \cite{FDCU19} and the GUIG model proposed in \cite{JFWZ21} because it can group an arbitrary number of matrix columns as a unit, whereas FGSR and GUIG group each column as a unit.

	\item[(2)] We show that the LRMR problem based on matrix rank and the one based on FLGSR have the same global minimizers. Moreover, we prove the equivalence between the LRMR problem based on FLGSR with $\ell_0$-norm and its relaxed version, as well as the equivalence between the relaxed version and the corresponding penalty problem. Their links are summarized in Figure \ref{fig:link}.
	\begin{figure}[htbp]
		\centering
		\includegraphics[width=0.7\linewidth]{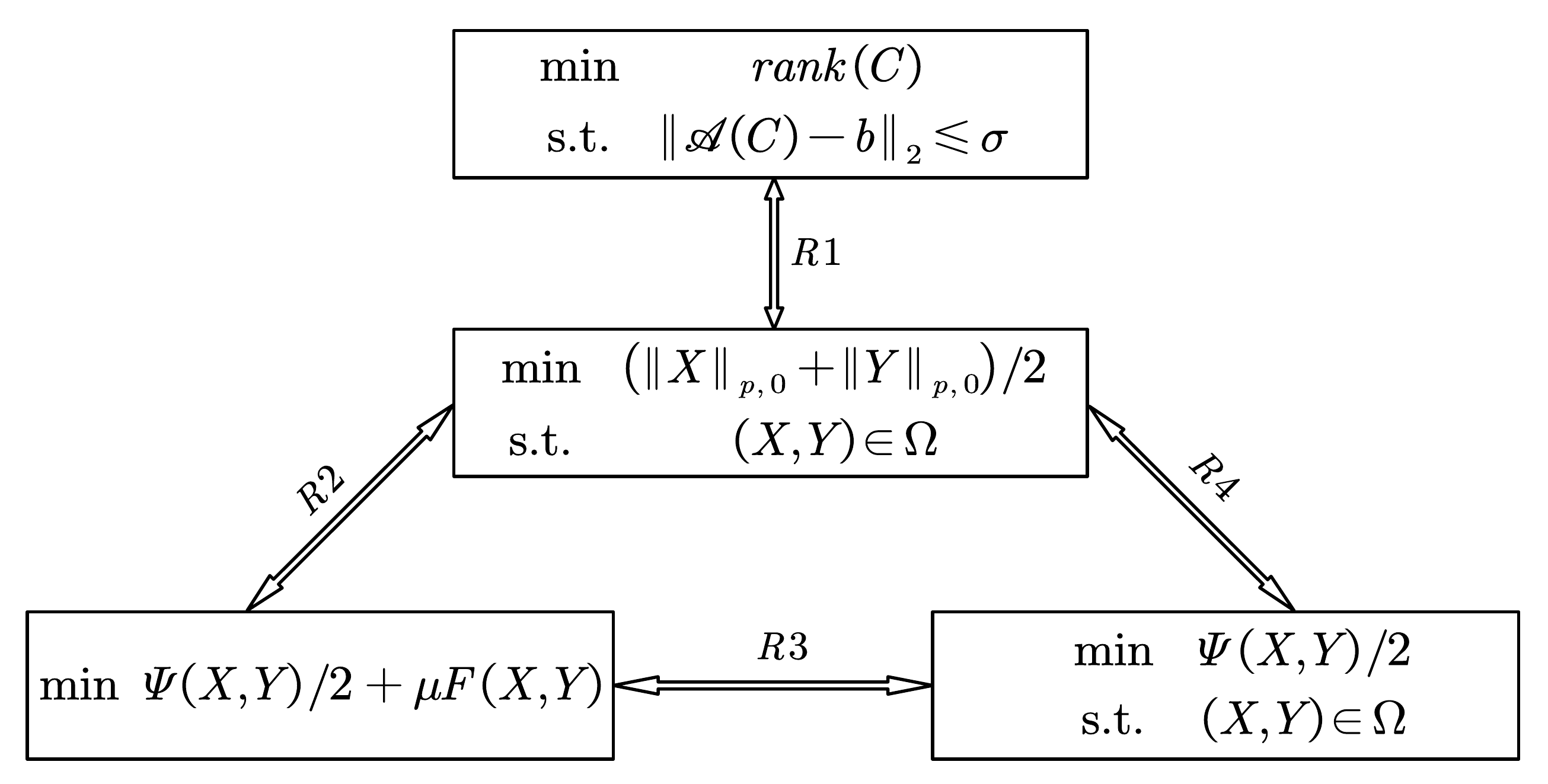}
		\caption{The relationships of global minimizers between problems \eqref{Q:p}, \eqref{Q:p0}, \eqref{Q:p_phi} and \eqref{Q:p_reg}.}
	\end{figure}
	\label{fig:link}
	\begin{center}
		R1 [Theorem \ref{th_p_p0}], R2 [Theorem \ref{th_p0_preg}], R3 [Theorem \ref{th_phi_preg}], R4 [Theorem \ref{th_p0_phi}].
	\end{center}	
	
	\item[(3)] We propose an inexact restarted augmented Lagrangian method, whose subproblem at each iteration is solved by an extrapolated linearized alternating minimization method. We also provide the convergence analysis of our method. In the update subproblem, we linearize each group, so that we can utilize the information from the groups that have been iterated before when we iterate the current group. This strategy enables our algorithm to achieve fast convergence and high performance, which are further enhanced by the restarted technique. %We also prove that the sequence generated by the proposed IRAL-ELAM algorithm converges to a Karush-Kuhn-Tucker (KKT) point of the proposed model. We propose a smoothing penalty method combining with accelerated proximal gradient algorithm to solve the group sparse optimization problem and show any accumulation point generated  by the algorithm  is a directional stationary point of \reff{Q:p_phi}. 
\end{itemize}

%The rest of the paper is organized as follows. Section 2 presents the links among problems (\ref{Q:p}),  (\reff{Q:p0}), \reff{Q:p_phi} and \reff{Q:p_reg}. In Section 3, we propose a smoothing penalty algorithm for \reff{Q:p_phi} and analyze its convergence. In Section 4, numerical examples are reported to illustrate the proposed algorithm. Section 5 draws conclusions.

%It is known that, while the matrix factorization models can generally be solved faster than the nuclear norm minimization-based approaches, it needs to know the matrix rank $ r $ in advance.

\textbf{Notation.} We introduce some notations that will be used throughout this paper. We denote by $[n]$ the set $\left\lbrace 1,2,\ldots,n\right\rbrace$, where $n$ is a positive integer. Scalars, vectors and matrices are denoted by lowercase letters (e.g., $x$), boldface lowercase letters (e.g., $\bm{x}$), and uppercase letters (e.g., $X$), respectively. The notation $\left\|\bm{x}\right\|_2:=\sqrt{\sum_i x_i^2}$ denotes the $\ell_2$-norm of a vector $\bm{x}$. For a matrix $X$, we denote by $\sigma(X):=\left(\sigma^1(X),\sigma^2(X),\ldots \right) $  the singular value vector of $X$ arranged in a nonincreasing order. The Frobenius norm, the $\ell_p$-norm, the nuclear norm and the spectral norm of a matrix $X$ are defined as $\left\|X\right\|_F:=\sqrt{\sum_i\sum_j X_{ij}^2}$, $\left\|X\right\|_p := \sqrt[p]{\sum_i\sum_j \left|X_{ij}\right|^p}$, $\left\|X\right\|_*:=\sum_i\sigma^i(X)$ and $\left\|X\right\|_2:=\sigma^1(X)$, respectively. 
Let $X$ be partitioned into $s$ disjoint groups as $ X:=[X_1,\ldots, X_s] \in \R^{m\times n} $ such that $X_{i} \in \mathbb{R}^{m\times n_{i}}$ for all $i\in [s]$. Here, $ n_1,\ldots, n_s $ are positive integers satisfying $ \sum_{i=1}^{s} n_{i}=n $.
Then we denote the $\ell_{p,0}$-norm of $ X $ as $\left\|X\right\|_{p,0}:=\sum_{i=1}^sn_i\left\|X_i\right\|_{p}^0$ (adopting the convenience that $0^0 = 0$).
We also denote the group support set of $ X $ as
\[ \Gamma(X):=\left\lbrace i \mid\left\|X_{i}\right\|_p \neq 0, \, i=1, \ldots, n\right\rbrace=\Gamma_{1}(X) \cup \Gamma_{2}(X), \]
\[ \Gamma_{1}\left(X\right):=\left\lbrace i \mid \left\|X_{i}\right\|_p<\nu, i \in \Gamma\left(X\right)\right\rbrace, \, \Gamma_{2}\left(X\right):=\left\lbrace i \mid \left\|X_{i}\right\|_p \geq \nu, i \in \Gamma\left(X\right)\right\rbrace. \]
The $ p $-distance from $X$ to a closed set $\mathbb{S} \subseteq \mathbb{R}^{n}$ is defined by $\operatorname{dist}_p(X, \mathbb{S}):=\inf \{\|X-Y\|_p: Y \in \mathbb{S}\}$.

\section{Flexible group sparse regularizer}
%In this section, we propose a flexible group sparse regularizer (FLGSR) as a surrogate to the matrix rank. For the convex relaxation of the matrix rank function, namely the nuclear norm, we also introduce the corresponding generalized matrix group nuclear norm. \textcolor{blue}{Moreover, we present the relationships between the two proposed concepts and the matrix rank and nuclear norm, respectively.}
Following \cite{FDCU19}, the rank of a matrix $C\in\R^{m\times n}$ can then be written as
$$
\operatorname{rank}\left(C\right) =\min_{C=XY^T} \operatorname{nnzc}\left(X\right)=\min_{C=XY^T} \operatorname{nnzc}\left(Y\right)=\min_{C=XY^T} \frac{1}{2}\left(\operatorname{nnzc}\left(X\right)+\operatorname{nnzc}\left(Y\right)\right),
$$
where $\operatorname{nnzc}\left(\cdot\right)$ denote the number of nonzero columns of the matrix. However, directly solving the model corresponding to the above decomposition is difficult due to its non-smoothness. Therefore, \cite{FDCU19} proposed the following factor group sparse regularizers (FGSR):
\begin{equation*}
	\begin{aligned}
		\operatorname{FGSR}_{1/2}\left(C\right) & :=\frac{1}{2} \min_{C=XY^T}\left(\left\|X\right\|_{2,1}+\left\|Y\right\|_{2,1}\right), \\
		\operatorname{FGSR}_{2 / 3}\left(C\right) & :=\frac{2}{3\alpha^{1/3}} \min _{C=XY^T}\left(\left\|X\right\|_{2,1}+\frac{\alpha}{2}\left\|Y\right\|_F^2\right),
	\end{aligned}
\end{equation*}
where $\alpha>0$, $X\in \mathbb{R}^{m\times d}$ and $ Y\in \mathbb{R}^{n\times d} $ with
$\rank(C)\le d \le \min\left\lbrace m,n \right\rbrace $. $ \left\| X \right\|_{2,1}: = \sum_{j=1}^d\left\|X(:,j)\right\|_2$ and the same to $Y$.
Furthermore, \cite{JFWZ21} gave a generalized unitarily invariant gauge function (GUIG):
\begin{equation*}
	\operatorname{GUIG}_g\left(C\right)=\min_{C=XY^T} \sum_{j=1}^d g_1\left(\left\|X(:,j)\right\|_2\right)+\sum_{j=1}^d g_2\left(\left\|Y(:,j)\right\|_2\right),
\end{equation*}
where $g_1,g_2:\R\to\R$ are two functions.

We observe that the grouping methods of FGSR and GUIG are not flexible enough, since they both consider each column of the matrix as a group. This has the following disadvantages:
\begin{itemize}
	\item When the algorithm applies alternating minimization to the whole matrix, it updates $X^k$, $Y^k$, $X^{k+1}$, $Y^{k+1}$, etc. sequentially. However, this implies that it cannot utilize the information from $X^{k+1}(:,l), l<j$ when updating $X^{k+1}(:,j)$, or the information from $Y^{k+1}(:,l), l<j$ when updating $Y^{k+1}(:,j)$. This results in poor effectiveness.
	\item When the algorithm applies alternating minimization to each group of the matrix, it updates $X^k(:,1)$, \ldots, $X^k(:,d)$, $Y^k(:,1)$, \ldots, $Y^k(:,d)$, $X^{k+1}(:,1)$, \ldots, $X^{k+1}(:,d)$, etc. in turn. The computational cost grows significantly with the number of columns of the matrix.
\end{itemize}
Therefore, in order to balance the efficiency and effectiveness, we propose a flexible grouping scheme, and extend the group $\ell_2$-norm to a more general group $\ell_p$-norm with $p>0$, named flexible group sparse regularizer (FLGSR), defined as follows:

%We first give the definition of our generalized group sparse regularizer (GGSR):
\begin{defi}\label{defi:FLGSR}
	Let $\phi_1,\phi_2:\mathbb{R} \to \mathbb{R}$ be functions. The FLGSR of a matrix $C\in\R^{m\times n}$, denoted by $G_p^{\phi_1,\phi_2}(\cdot): \mathbb{R}^{m \times n} \to \mathbb{R}$,  is defined as:
	$$ G_{p}^{\phi_1,\phi_2}\left(C\right)=\mathop{\min} \limits_{C=XY^T} \frac{1}{2}\sum_{i=1}^{s}n_i\left(\phi_1\left(\left\|X_i\right\|_p\right)+\phi_2\left(\left\|Y_i\right\|_p\right)\right),  $$
	where $\sum_{i=1}^sn_i=n$, $ X:=[X_1,\ldots, X_s] \in \R^{m\times n} $ with $X_{i} \in \mathbb{R}^{m\times n_{i}}$ and  $ Y:=[Y_1,\ldots, Y_s] \in \R^{n\times n} $ with $Y_{i} \in \mathbb{R}^{n\times n_{i}}$ for all $i\in [s]$.
\end{defi}
\begin{remark}
	Thanks to the flexible grouping in FLGSR, we can select large $n_i$ values when $n$ is large, making $s$ much smaller than $n$. This enables us to design an algorithm that applies alternating minimization to each group of the matrix, improving both computational efficiency and effectiveness. Please see Subsection \ref{sub:fg} for detailed comparison of different number of groups.
\end{remark}

For simplicity, when $\phi_1=\phi_2$, we denote $ G_p^{\phi_1,\phi_2}\left(C\right) $ by $ G_p^{\phi}\left(C\right) $, where $\phi=\phi_1=\phi_2$.
In what follows, we present two theorems that reveal the relationships between $G_p^{\left|\cdot\right|^0}\left(C\right)$ and $\rank\left(C\right)$, and between $G_2^{\phi}\left(C\right)$ and the spectral function of matrix $C$, respectively.
\begin{theorem}\label{Thm:rank}
	Let $ C=[C_1,\ldots, C_s] \in \R^{m\times n} $ be a matrix of rank $r$, where $C_i \in\R^{m\times n_i}$ with $\sum_{i=1}^sn_i=n$. If there are $ n_{i_1}, \ldots, n_{i_p} \in \left\lbrace n_1,\ldots, n_s \right\rbrace  $ such that $ \sum_{j=1}^{p} n_{i_j}=r $, then $ G_p^{\left|\cdot\right|^0}\left(C\right)=\rank\left(C\right) $.
\end{theorem}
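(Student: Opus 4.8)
The plan is to bound $G_p^{|\cdot|^0}(C)$ from below and from above by $r$. First note that since $p>0$ we have $\|Z\|_p = 0$ if and only if $Z=0$ for any matrix block $Z$, so, with the convention $0^0=0$, the objective in Definition~\ref{defi:FLGSR} evaluated at a feasible pair $(X,Y)$ with $C=XY^T$ is exactly
$\frac12\big(\sum_{i:\,X_i\neq 0} n_i+\sum_{i:\,Y_i\neq 0} n_i\big)$.
Thus it suffices to show that this quantity is always at least $r$, and is equal to $r$ for at least one feasible $(X,Y)$.

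For the lower bound, fix any factorization $C=XY^T$ and write $C=\sum_{i=1}^s X_iY_i^T$ with each $X_iY_i^T\in\R^{m\times n}$. Subadditivity of rank gives $r=\rank(C)\le \sum_{i=1}^s \rank(X_iY_i^T)$. For indices $i$ with $X_i=0$ the term vanishes, while for the others $\rank(X_iY_i^T)\le \rank(X_i)\le n_i$ because $X_i$ has only $n_i$ columns; hence $r\le \sum_{i:\,X_i\neq 0} n_i$. The symmetric argument applied to the $Y_i$ gives $r\le \sum_{i:\,Y_i\neq 0} n_i$. Averaging the two inequalities shows the objective value at $(X,Y)$ is at least $r$, so $G_p^{|\cdot|^0}(C)\ge r$.

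For the upper bound we construct a factorization attaining $r$. If $r=0$ then $C=0$ and $X=Y=0$ works, so assume $r\ge 1$, and write $C=UV^T$ with $U\in\R^{m\times r}$ and $V\in\R^{n\times r}$ (for instance from the compact SVD of $C$). By the hypothesis choose distinct indices $i_1,\dots,i_p$ with $n_{i_1}+\cdots+n_{i_p}=r$, and partition the column index set $\{1,\dots,r\}$ into consecutive blocks $B_1,\dots,B_p$ with $|B_j|=n_{i_j}$. Define $X=[X_1,\dots,X_s]$ by taking $X_{i_j}$ to be the submatrix of $U$ formed by the columns indexed by $B_j$, for each $j\in[p]$, and $X_i=0$ for $i\notin\{i_1,\dots,i_p\}$; define $Y$ analogously from $V$. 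Then
$XY^T=\sum_{j=1}^p X_{i_j}Y_{i_j}^T=\sum_{k=1}^r U(:,k)V(:,k)^T=UV^T=C$,
so $(X,Y)$ is feasible, and the nonzero groups of $X$ and of $Y$ are both contained in $\{i_1,\dots,i_p\}$. Hence the objective value at $(X,Y)$ is at most $\frac12\big(\sum_{j=1}^p n_{i_j}+\sum_{j=1}^p n_{i_j}\big)=r$, giving $G_p^{|\cdot|^0}(C)\le r$. Combining with the lower bound yields $G_p^{|\cdot|^0}(C)=r=\rank(C)$.

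I do not expect a serious obstacle in this argument; the points that need care are the per-block rank bound $\rank(X_iY_i^T)\le n_i$ (this is precisely what makes the column counts $n_i$, rather than a count of $1$ per occupied group, appear correctly in the lower bound), the verification that the constructed $X$ and $Y$ actually multiply back to $C$ while being supported only on the chosen groups, and the observation that the combinatorial hypothesis $\sum_j n_{i_j}=r$ is exactly what is needed to make the upper-bound construction simultaneously feasible and tight — without it one can only place the $r$ columns of $U$ and $V$ into groups whose sizes overshoot $r$.
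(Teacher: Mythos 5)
Your proof is correct and follows essentially the same route as the paper's: a lower bound from rank subadditivity over the groups (you split $C=\sum_i X_iY_i^T$ while the paper bounds $\rank(C)\le\tfrac12(\rank(X)+\rank(Y))$ and then splits $\rank(X)$, $\rank(Y)$ over column blocks, which amounts to the same estimate), and an upper bound by placing a compact rank-$r$ factorization into groups of total size $r$ and zeroing the rest. The only cosmetic difference is that you assign the $r$ columns directly to the chosen groups $i_1,\dots,i_p$ instead of the paper's ``without loss of generality'' reordering, which is, if anything, slightly cleaner.
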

\begin{proof}
The proof can be found in Appendix \ref{App-Thm:rank}.
\end{proof}

\begin{theorem}\label{Thm:nuclear}
	For any given matrix $ C \in \R^{m\times n} $, there exists an absolutely symmetric function $g:\mathbb{R}^{\min\left\lbrace m,n\right\rbrace}\to \mathbb{R}$, such that: $ G_2^{\phi}\left(C\right)=g\circ \sigma(C):=g\left(\sigma(C)\right) $.
\end{theorem}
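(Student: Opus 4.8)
The plan is to show that the functional $G_2^{\phi}$ is orthogonally (unitarily) invariant on $\R^{m\times n}$, and then to produce $g$ by restricting $G_2^{\phi}$ to the rectangular diagonal matrices; the representation $G_2^{\phi}=g\circ\sigma$ then follows from the singular value decomposition. For the invariance step, fix orthogonal matrices $U\in\R^{m\times m}$ and $V\in\R^{n\times n}$. If $C=XY^T$ is any admissible factorization as in Definition~\ref{defi:FLGSR}, with block partitions $X=[X_1,\dots,X_s]$ and $Y=[Y_1,\dots,Y_s]$, then $UCV=(UX)(V^{T}Y)^T$, and left multiplication does not touch the column partition, so $UX=[UX_1,\dots,UX_s]$ and $V^{T}Y=[V^{T}Y_1,\dots,V^{T}Y_s]$ are admissible partitions for $UCV$. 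Since the block ``norm'' $\|\cdot\|_2$ appearing in the definition is the Frobenius norm of the block, and the Frobenius norm is invariant under left multiplication by an orthogonal matrix, we get $\|UX_i\|_2=\|X_i\|_2$ and $\|V^{T}Y_i\|_2=\|Y_i\|_2$ for all $i$. Hence the two minimization problems defining $G_2^{\phi}(C)$ and $G_2^{\phi}(UCV)$ have the same feasible objective values (the correspondence being a bijection, reversed by using $U^{T},V^{T}$), so $G_2^{\phi}(UCV)=G_2^{\phi}(C)$. The feasible set of the defining minimization is always nonempty (take $X=C$, $Y=I_n$), so there is nothing degenerate here.

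Next I would build $g$. Let $q=\min\{m,n\}$ and, for $x\in\R^{q}$, let $\mathrm{Diag}(x)\in\R^{m\times n}$ be the matrix with $x$ on its main diagonal and zeros elsewhere; define $g(x):=G_2^{\phi}(\mathrm{Diag}(x))$. Any permutation or sign change of the entries of $x$ produces a matrix of the form $P\,\mathrm{Diag}(x)\,Q$ for suitable signed permutation matrices $P,Q$ (which are orthogonal), so the invariance just established gives that $g$ is absolutely symmetric. Finally, for an arbitrary $C\in\R^{m\times n}$ take a singular value decomposition $C=U\,\mathrm{Diag}(\sigma(C))\,V^{T}$ with $U,V$ orthogonal; invariance yields $G_2^{\phi}(C)=G_2^{\phi}(\mathrm{Diag}(\sigma(C)))=g(\sigma(C))$, which is exactly the assertion.

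I expect the only delicate points to be bookkeeping rather than conceptual: checking carefully that left multiplication by an orthogonal matrix preserves the block column partition of $X$ (and of $Y$) and only rotates columns within their blocks — it does, since we never permute columns — and keeping track of the rectangular shapes so that $\mathrm{Diag}$ is the correct $m\times n$ diagonal embedding and the SVD identity typechecks. Everything else reduces to the orthogonal invariance of the Frobenius norm applied blockwise; in particular no hypotheses on $\phi$ beyond being a real-valued function are used. This argument is precisely the elementary half of von Neumann's correspondence between unitarily invariant matrix functions and absolutely symmetric functions of singular values, specialized to $G_2^{\phi}$, and one could alternatively just quote that correspondence after verifying the invariance.
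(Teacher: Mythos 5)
Your proof is correct and follows essentially the same route as the paper: the key step in both is the orthogonal invariance $G_2^{\phi}(UCV)=G_2^{\phi}(C)$, obtained via the same bijection between factorizations $C=XY^T$ and $UCV=(UX)(V^TY)^T$ together with the blockwise invariance of the Frobenius norm. The only difference is that the paper then cites Lewis's result (unitarily invariant functions are absolutely symmetric functions of the singular values) to conclude, whereas you prove that step directly by defining $g(x)=G_2^{\phi}(\mathrm{Diag}(x))$ and invoking the SVD, which is a self-contained rendering of the same argument.
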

\begin{proof}
	The definition of an absolutely symmetric function and the proof of the theorem are given in Appendix \ref{App-Thm:nuclear}.
\end{proof}

\begin{remark}
When $n_1=\ldots=n_s=1$, $ G_2^{\phi}\left(C\right)=\sum_{i}\phi\left(\sigma^i(C)\right) $ if $\phi$ satisfies certain conditions \cite{JFWZ21}. Unfortunately, the relationship between $\phi$ and $g$ cannot be established in other cases.
\end{remark}

\begin{remark}
	According to Theorem \ref{Thm:rank}, the matrix rank can be equivalently formulated as a FLGSR $ G_p^{\left|\cdot\right|^0}\left(C\right)$ under some easy conditions. Moreover, by applying Theorem \ref{Thm:nuclear}, there exists an absolutely symmetric spectral function $g\circ \sigma(C)$ corresponding to our proposed FLGSR function $G_2^{\phi}\left(C\right)$. This verifies that the proposed FLGSR is a good rank surrogate.
	Based on our proposed FLGSR, we do not need to calculate the SVD of the matrix, which is a costly operation, unlike the spectral function of the matrix. It also does not need to estimate the rank of the matrix beforehand, which is difficult and can affect the model performance if it is too high or too low, unlike the matrix decomposition. Moreover, our grouping is more flexible than other group sparse regularizers, allowing us to design a faster algorithm with better results.
\end{remark}

%From the above theorem, we can obtain the relationship between our proposed FLGSR and the rank of a matrix. 

\section{Efficient LRMR with FLGSR: Equivalence analysis}
In this section, we propose a novel FLGSR model based on matrix factorization for the low rank matrix recovery (LRMR) problem \reff{Q:p}, as follows:
\begin{equation}\label{Q:p0}
	G_p^{\left|\cdot\right|^0}\left(X{Y^T}\right)=\mathop{\min} \limits_{X\in\R^{m\times n},Y\in\R^{n\times n}} \frac{1}{2}\left(\left\| X \right\|_{p,0}+\left\| Y \right\|_{p,0}\right), \quad  \mbox{\rm s.t.} \quad \left\|\mathscr{A}\left( {X{Y^T}} \right)-\bm{b}\right\|_2\le\sigma.		
\end{equation}
Then,  we consider the following relaxation problem of \reff{Q:p0}:
\begin{equation}\label{Q:p_phi}
	G_p^{\phi}\left(X{Y^T}\right)=\mathop{\min} \limits_{X\in\R^{m\times n},Y\in\R^{n\times n}}\frac{1}{2}\sum_{i=1}^{s}n_i\left(\phi\left(\left\|X_i\right\|_p\right)+\phi\left(\left\|Y_i\right\|_p\right)\right) , \quad  \mbox{\rm s.t.} \quad \left\|\mathscr{A}\left( {X{Y^T}} \right)-\bm{b}\right\|_2\le\sigma,  		
\end{equation}
and its penalty problem:
\begin{equation}\label{Q:p_reg}
	\mathop{\min} \limits_{X\in\R^{m\times n},Y\in\R^{n\times n}} \frac{1}{2}\sum_{i=1}^{s}n_i\left(\phi\left(\left\|X_i\right\|_p\right)+\phi\left(\left\|Y_i\right\|_p\right)\right)
	+\mu \max\left\lbrace \left\|\mathscr{A}\left( {X{Y^T}} \right) - \bm{b}\right\|_2^2-\sigma^2,0\right\rbrace.	
\end{equation}

Here function $\phi(\cdot): \mathbb{R}_{+} \rightarrow \mathbb{R}_{+}$ is a capped folded concave function that satisfies the following two conditions with a fixed parameter $\nu>0$:
\par (1) $\phi$ is continuous, increasing and concave in $[0, \infty)$ with $\phi(0)=0$;
\par (2) there is a $\nu>0$ such that $\phi$ is differentiable in $(0, \nu), \phi_{-}^{\prime}(\nu):=\lim _{t \uparrow \nu} \phi^{\prime}(t)>0$ and $\phi(t)=1$ for $t \in[\nu, \infty)$.

Some capped folded concave functions of these two assumptions can be found in \cite{PC21}, and we omit them here.
For simplicity, we denote
\begin{equation*}
	\begin{array}{rl}\Omega&=\left\lbrace \left( X,  Y\right) \in \R^{m\times n} \times \R^{n\times n} \mid \left\|\mathscr{A}\left( {X{Y^T}} \right)-\bm{b}\right\|_2\le\sigma \right\rbrace,\\
		F(X,Y)&=\left(\left\|\mathscr{A}\left( {X{Y^T}} \right) - \bm{b}\right\|_2^2-\sigma^2\right)_+,\quad   (z)_+=\max\{0,z\}, \,\forall z\in \R, \\
		\Phi\left(X\right)&=\sum\limits_{i=1}^{s}n_i\phi\left(\left\|X_i\right\|_p\right),\, \tilde\Phi\left(Y\right)=\sum\limits_{i=1}^{s}n_i\phi\left(\left\|Y_i\right\|_p\right),\,\Psi\left(X,Y\right)=\Phi\left(X\right)+\tilde\Phi\left(Y\right).		
	\end{array}
\end{equation*}

Next, we present some relationships between \eqref{Q:p}, \eqref{Q:p0}, \eqref{Q:p_phi} and \eqref{Q:p_reg}. %Without specific explanation, Assumptions 1 is assumed throughout the paper.

%In this paper, we introduce a noval group sparse optimization  model based on matrix factorization to \reff{Q:p}. Suppose that $C=XY^T$ with $X\in \R^{m\times r}$ and $Y\in \R^{n\times r}$. Let  $ X:=[X, 0] \in \R^{m\times n}$ and $ Y:=[Y, 0] \in\R^{n\times n} $. Partition $X\,(Y)$ into $s$ disjoint groups as $ X=[X_1,\ldots, X_s]\,(Y=[Y_1,\ldots, Y_s]) $ with $X_{i} \in \mathbb{R}^{m\times n_{i}}\,(Y_{i} \in \mathbb{R}^{n\times n_{i}})$, $i=1, \ldots, s$. Here, $ n_1,\ldots, n_s $ are positive integer with $ \sum_{i=1}^{s} n_{i}=n $. Then we do not need to pre-specify the rank $ r $ and the new matrices $ X $ and $ Y $ have sparse groups structure.  Motivated by this observation and the idea of group sparse, we introduce the following low rank matrix recovery model via group sparse minimization: where $\|X\|_{p,0}=\sum_{i=1}^sn_i\left\|X_i\right\|_{p}^0,\;p\ge1$ (adopting the convenience that $0^0 = 0$) is a group cardinality function.

%In this paper, we consider the following penalty problem with $ \mu>0 $

%In this section, we present some relationships between \eqref{Q:p}, \eqref{Q:p0}, \eqref{Q:p_phi} and \eqref{Q:p_reg}. Without specific explanation, Assumptions 1 is assumed throughout the paper.

%We first present two lemmas that can be used for theoretical analysis.

\subsection{Link between problems \eqref{Q:p} and \eqref{Q:p0}}
\begin{theorem}\label{th_p_p0}
	Problem \eqref{Q:p} and \eqref{Q:p0} are equivalent. Moreover, they have the same optimal values.
\end{theorem}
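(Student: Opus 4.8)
The plan is to show that problems \eqref{Q:p} and \eqref{Q:p0} have the same feasible-set-to-objective structure via the factorization $C = XY^T$. The key observation is Theorem \ref{Thm:rank} (or rather the underlying identity that motivated it): for any matrix $C$ of rank $r$, one has $\rank(C) = \min_{C=XY^T}\frac12(\|X\|_{p,0}+\|Y\|_{p,0})$ when the group sizes $n_i$ admit a sub-collection summing to $r$. However, in problem \eqref{Q:p0} the factors are square, $X\in\R^{m\times n}$, $Y\in\R^{n\times n}$, so every matrix $C\in\R^{m\times n}$ can be written as $C=XY^T$, and in particular one can always pad with zero columns. The first step is therefore to establish the ``$\ge$'' direction: given any $C$ feasible for \eqref{Q:p} with $\rank(C)=r$, produce $X,Y$ with $C=XY^T$, feasible for \eqref{Q:p0}, and $\frac12(\|X\|_{p,0}+\|Y\|_{p,0}) = r$. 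Since $\|X\|_{p,0} = \sum_i n_i\|X_i\|_p^0$ counts columns of $X$ (weighted so that a nonzero group contributes its full size $n_i$), I would take a rank-$r$ factorization $C=\hat X\hat Y^T$ with $\hat X\in\R^{m\times r}$, $\hat Y\in\R^{n\times r}$, then arrange the columns so that exactly the groups indexed by $i_1,\dots,i_p$ are nonzero (using $\sum_j n_{i_j}=r$) and the rest are zero; this gives $\|X\|_{p,0}=\|Y\|_{p,0}=r$, hence objective value $r$.

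For the reverse direction ``$\le$'': take any $(X,Y)$ feasible for \eqref{Q:p0}. Then $C:=XY^T$ satisfies $\|\mathscr{A}(C)-\bm b\|_2\le\sigma$, so $C$ is feasible for \eqref{Q:p}, and $\rank(C)=\rank(XY^T)\le\min\{\operatorname{nnzc}(X),\operatorname{nnzc}(Y)\}\le\frac12(\operatorname{nnzc}(X)+\operatorname{nnzc}(Y))$, where $\operatorname{nnzc}$ counts nonzero columns. The subtlety is relating $\operatorname{nnzc}$ to the weighted quantity $\|\cdot\|_{p,0}=\sum_i n_i\|X_i\|_p^0$: one has $\operatorname{nnzc}(X)\le\sum_i n_i\|X_i\|_p^0$ always (a nonzero group of size $n_i$ may have up to $n_i$ nonzero columns, but contributes $n_i$ to the weighted sum), so $\rank(XY^T)\le\frac12(\|X\|_{p,0}+\|Y\|_{p,0})$, giving $\min\eqref{Q:p}\le\min\eqref{Q:p0}$. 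Combining the two inequalities yields equality of optimal values. To get full equivalence of the problems (same minimizers in the appropriate sense), I would also note that the optimal $C^*$ of \eqref{Q:p} is recovered as $X^*(Y^*)^T$ for any optimal $(X^*,Y^*)$ of \eqref{Q:p0}, and conversely every optimal $C^*$ lifts to an optimal pair.

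I expect the main obstacle to be the bookkeeping in the ``$\ge$'' direction: one must verify that the hypothesis of Theorem \ref{Thm:rank} — existence of a sub-collection of group sizes summing exactly to $r=\rank(C)$ — is available, or else argue that within \eqref{Q:p0} the square padding makes this automatic. In fact since \eqref{Q:p0} uses fixed square factors and a fixed grouping $(n_1,\dots,n_s)$ with $\sum n_i=n$, the clean statement is that one should pick, among all feasible $C$, whether the optimal rank value $r$ is ``realizable'' by the grouping; the cleanest route is to observe that the optimal value of \eqref{Q:p0} equals $\min\{\,\tfrac12(\|X\|_{p,0}+\|Y\|_{p,0}) : XY^T\in\Omega\text{-feasible}\,\}$ and that for the minimizing $C$ one can always choose a factorization concentrating nonzero columns into as few full groups as possible, so the weighted count never exceeds $\lceil r/\min_i n_i\rceil\cdot(\text{something})$ — but since we only need a matching lower bound from the other direction, the argument closes as soon as we exhibit \emph{one} feasible factorization of the rank-minimizing $C$ whose weighted column count equals $\rank(C)$, which is possible precisely when $r$ lies in the set of subset-sums of $\{n_1,\dots,n_s\}$; I would state this as the operative hypothesis (inherited from Theorem \ref{Thm:rank}) and then the proof is routine.
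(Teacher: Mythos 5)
Your proposal follows essentially the same route as the paper's proof: exhibit a factorization of the rank-minimizing $C$ whose weighted group counts equal $\rank(C)$ to get $\tilde r\ge r^*$, and use the chain $\rank(XY^T)\le\frac{1}{2}\left(\|X\|_{p,0}+\|Y\|_{p,0}\right)$ on any feasible pair to get the reverse inequality. Your explicit flagging of the subset-sum condition on $\{n_1,\dots,n_s\}$ (the hypothesis of Theorem \ref{Thm:rank}) is apt rather than a flaw, since the paper's proof silently assumes exactly this when it asserts the existence of $(\tilde X,\tilde Y)$ with $\|\tilde X\|_{p,0}=\|\tilde Y\|_{p,0}=\tilde r$.
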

\begin{proof}
	First, let ${\tilde C}\in \R^{m\times n} $ be a global minimizer of \eqref{Q:p} with ${\tilde r}:=\rank(\tilde C) $. Then there exist $ \left({\tilde X},{\tilde Y}\right) \in \R^{m\times n} \times \R^{n\times n}  $  such that $ {\tilde C}={\tilde X}{\tilde Y}^T$ and $ \|{\tilde X}\|_{p,0}=\|{\tilde Y}\|_{p,0}={\tilde r} $. Thus, 
	\begin{equation}\label{p-p0-1}
		{\tilde r}=\frac{1}{2}\left(\| {\tilde X} \|_{p,0}+\| {\tilde Y} \|_{p,0}\right)\ge  r^*,	
	\end{equation}
	where $  r^* $ is the global minimum of \eqref{Q:p0}.
	
	Next, suppose $ \left( X^*,Y^*\right) \in \R^{m\times n} \times \R^{n\times n}  $ is a global minimizer of \eqref{Q:p0}, then  
	\begin{equation}\label{p-p0-2}
	{\tilde r}\le \rank\left( X^* \left( Y^*\right)^T\right) \le \frac{1}{2}\left(\rank\left(X^*\right) +\rank\left( Y^*\right) \right) \le \frac{1}{2}\left(\left\| X^* \right\|_{p,0}+\left\| Y^* \right\|_{p,0}\right)=r^*.	
	\end{equation}
	Hence, using \eqref{p-p0-1} and \eqref{p-p0-2}, we ensure that problem \eqref{Q:p} and \eqref{Q:p0} are equivalent. Moreover, they have the same optimal values.
\end{proof}

\subsection{Link between problems \eqref{Q:p0} and \eqref{Q:p_phi}}
In this subsection, we first give the nature of the feasible solution in problem \eqref{Q:p0}.
\begin{lemma}\label{lem:xyk}
If $ \left( X^*, Y^*\right) \in \R^{m\times n} \times \R^{n\times n}  $ is a global minimizer of \eqref{Q:p0} with 
$ \left\| X^* \right\|_{p,0}+\left\|  Y^* \right\|_{p,0}=2k $, then, for any $  \left( X,  Y\right) \in \Omega  $, we have   $ \left\| X \right\|_{p,0} \ge k $ and $ \left\| Y \right\|_{p,0} \ge k $. Thus, $ \left\| X^* \right\|_{p,0} =\left\| Y^* \right\|_{p,0}= k $.
\end{lemma}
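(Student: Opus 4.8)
The plan is to argue by contradiction, exploiting the symmetry of the factorization $X Y^T$ and the structure of the $\ell_{p,0}$-norm as a weighted count of nonzero groups. First I would recall that for any feasible pair $(X,Y)\in\Omega$ we have $\|\mathscr{A}(XY^T)-\bm b\|_2\le\sigma$, so $XY^T$ is a feasible matrix for problem \eqref{Q:p}; combined with Theorem \ref{th_p_p0}, the rank of any such $XY^T$ is at least the common optimal value, call it $r^*$, and $2k = \|X^*\|_{p,0}+\|Y^*\|_{p,0} \ge 2r^*$ while the minimality of $(X^*,Y^*)$ in \eqref{Q:p0} forces $k = r^*$ (using the rank-based lower bound chain already displayed in \eqref{p-p0-2}). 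So the real content is the per-factor bound $\|X\|_{p,0}\ge k$ and $\|Y\|_{p,0}\ge k$ for \emph{every} feasible $(X,Y)$, not just the optimal one.

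The key step is the following observation: if $(X,Y)\in\Omega$, then $\rank(XY^T)\le\min\{\rank(X),\rank(Y)\}\le\min\{\,$number of nonzero columns of $X$, number of nonzero columns of $Y\,\}$. Since $\|X\|_{p,0}=\sum_{i=1}^s n_i\|X_i\|_p^0$ counts, with multiplicity $n_i$, the groups of $X$ that are nonzero, and a nonzero group $X_i\in\R^{m\times n_i}$ contributes at most $n_i$ nonzero columns, we get $\rank(X)\le$ (number of nonzero columns of $X$) $\le \|X\|_{p,0}$, and likewise $\rank(Y)\le\|Y\|_{p,0}$. Hence for any feasible $(X,Y)$,
\[
r^* \;\le\; \rank(XY^T)\;\le\;\min\{\|X\|_{p,0},\|Y\|_{p,0}\}.
\]
Now I would apply this with the specific feasible pair $(X^*,Y^*)$: since $k=r^*$, both $\|X^*\|_{p,0}\ge r^* = k$ and $\|Y^*\|_{p,0}\ge r^* = k$; but their sum is $2k$, so equality holds throughout and $\|X^*\|_{p,0}=\|Y^*\|_{p,0}=k$. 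For a general feasible $(X,Y)\in\Omega$, the same chain directly gives $\|X\|_{p,0}\ge r^* = k$ and $\|Y\|_{p,0}\ge r^* = k$, which is exactly the claimed statement.

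The main obstacle — or rather the one point requiring care — is justifying that $\rank(X)\le\|X\|_{p,0}$ cleanly: one must be sure that the weights $n_i$ really do dominate the number of nonzero columns inside group $X_i$, which is immediate since $X_i$ has exactly $n_i$ columns, so if $X_i\ne 0$ it contributes $n_i$ to $\|X\|_{p,0}$ and at most $n_i$ to $\mathrm{nnzc}(X)$, and if $X_i=0$ it contributes $0$ to both. A second small subtlety is the equality-forcing argument: one should note explicitly that $a+b=2k$ together with $a\ge k$ and $b\ge k$ (for $a=\|X^*\|_{p,0}$, $b=\|Y^*\|_{p,0}$) yields $a=b=k$, which closes the lemma. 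No SVD or spectral machinery is needed; everything follows from rank subadditivity/submultiplicativity for products and the column-counting interpretation of $\ell_{p,0}$, together with the already-established equivalence in Theorem \ref{th_p_p0}.
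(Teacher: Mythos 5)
Your proposal is correct and follows essentially the same route as the paper: both rest on the chain $\rank(XY^T)\le\min\{\rank(X),\rank(Y)\}\le\min\{\|X\|_{p,0},\|Y\|_{p,0}\}$ together with Theorem \ref{th_p_p0}, which forces $\rank(XY^T)\ge k$ for every feasible pair. The only differences are cosmetic: you argue directly rather than by contradiction, and you spell out the column-counting justification of $\rank(X)\le\|X\|_{p,0}$ and the equality-forcing step for $\|X^*\|_{p,0}=\|Y^*\|_{p,0}=k$, which the paper leaves implicit.
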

\begin{proof}
Assume on the contrary $ \min\left\lbrace\left\| X \right\|_{p,0},\left\| Y \right\|_{p,0}\right\rbrace =r < k $. Let $ C=XY^T $, then we obtain $ \rank\left(C\right)\le r $ by
$$ \rank\left(C\right)\le\rank\left(X\right)\le\left\| X \right\|_{p,0},\,\rank\left(C\right)\le\rank\left(Y\right)\le\left\| Y \right\|_{p,0}.  $$
Utilizing Theorem \ref{th_p_p0}, we get $ \rank\left(C\right)\ge k $. This contradicts $ \rank\left(C\right)\le r<k $. Hence $ \left\| X \right\|_{p,0} \ge k $ and $ \left\| Y \right\|_{p,0} \ge k $.
\end{proof}

For integers $s$ and $ t $ with $0 \leq s,\,t \leq n$, denote 
\[ Q_{X}^s:=\left\{X \in \mathbb{R}^{m\times n}:\|X\|_{p,0} \leq s\right\},\,Q_{Y}^t:=\left\{Y \in \mathbb{R}^{n\times n}:\|Y\|_{p,0} \leq t\right\} \]
and 
\[ \operatorname{dist}_p\left(\Omega_X, Q_{X}^s\right):=\inf_{X \in \Omega_X} \left\{\operatorname{dist}_p\left(X, Q_{X}^s\right) \right\},\,\operatorname{dist}_p\left(\Omega_Y, Q_{Y}^t\right):=\inf_{Y \in \Omega_Y} \left\{\operatorname{dist}_p\left(Y, Q_{Y}^t\right)\right\}, \]
where 
\[ \Omega_X=\left\lbrace X\mid  \exists\, Y\; \mbox{\rm s.t.}\; \left(X,Y\right) \in \Omega\right\rbrace\; \text{and}\;\Omega_Y=\left\lbrace Y\mid  \exists\, X\; \mbox{\rm s.t.}\; \left(X,Y\right) \in \Omega\right\rbrace. \]
Recall that the global minimum of \eqref{Q:p0} is a positive integer $k$. Then the feasible set $\Omega$ of \eqref{Q:p0} does not have $ \left(X,Y\right) $ with $ \min\left\lbrace\left\| X \right\|_{p,0},\left\| Y \right\|_{p,0}\right\rbrace < k $ by Lemma \ref{lem:xyk}, which means $ \bar{\nu}=\min\left\lbrace\bar{\nu}_X,\bar{\nu}_Y\right\rbrace>0  $ with
\begin{equation*}
	\begin{aligned}
	\bar{\nu}_X=\min\left\lbrace\frac{1}{2K} \operatorname{dist}_p\left(\Omega_X, Q_X^{k-K}\right): K=1, \ldots, k\right\rbrace,\\ \bar{\nu}_Y=\min\left\lbrace\frac{1}{2K} \operatorname{dist}_p\left(\Omega_Y, Q_Y^{k-K}\right): K=1, \ldots, k\right\rbrace.	
	\end{aligned}
\end{equation*}

In the following, we show that problems \eqref{Q:p0} and \eqref{Q:p_phi} have the same global optimal solutions for any $\nu \in(0, \bar{\nu})$.

\begin{theorem}\label{th_p0_phi}
For any capped folded concave function $\phi$ satisfying $0<\nu<\bar{\nu}$ and $\phi^{\text {CapL1 }}(t)$ $\leq \phi(t)<|t|^{0}$ for $t \in(0, \nu)$, problems \eqref{Q:p0} and \eqref{Q:p_phi} have same global minimizers and same optimal values.
\end{theorem}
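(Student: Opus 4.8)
The plan is to establish equality of optimal values first, and then use that to match up global minimizers in both directions. Write $r^*$ for the common optimal value of \eqref{Q:p} and \eqref{Q:p0} (they coincide by Theorem \ref{th_p_p0}), so $r^* = k$ where, by Lemma \ref{lem:xyk}, any global minimizer $(X^*,Y^*)$ of \eqref{Q:p0} satisfies $\|X^*\|_{p,0} = \|Y^*\|_{p,0} = k$. Denote by $v_\phi$ the optimal value of \eqref{Q:p_phi}. First I would show $v_\phi \le k$: take a global minimizer $(X^*,Y^*)$ of \eqref{Q:p0}; it is feasible for \eqref{Q:p_phi}, and since $\phi(t) = 1$ for $t \ge \nu$ and $\phi(t) \le |t|^0 = 1$ for $t \in (0,\nu)$ while $\phi(0)=0$, we get $\Phi(X^*) = \sum_{i} n_i \phi(\|X^*_i\|_p) \le \sum_{i: X^*_i \ne 0} n_i = \|X^*\|_{p,0} = k$, and similarly for $Y^*$; hence $\tfrac12\Psi(X^*,Y^*) \le k$.

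The reverse inequality $v_\phi \ge k$ is where the quantity $\bar\nu$ (and the hypothesis $\nu < \bar\nu$, together with the lower bound $\phi \ge \phi^{\text{CapL1}}$ on $(0,\nu)$) does the real work, and I expect this to be the main obstacle. Let $(\bar X, \bar Y) \in \Omega$ be arbitrary. By Lemma \ref{lem:xyk}, $\|\bar X\|_{p,0} \ge k$ and $\|\bar Y\|_{p,0} \ge k$. I want to show $\tfrac12(\Phi(\bar X) + \tilde\Phi(\bar Y)) \ge k$; it suffices to show $\Phi(\bar X) \ge k$ and $\Phi(\bar Y) \ge \dots$ wait — rather, $\Phi(\bar X) \ge k$ and $\tilde\Phi(\bar Y) \ge k$ separately, by symmetry it is enough to treat $\bar X$. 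Split the group support $\Gamma(\bar X) = \Gamma_1(\bar X) \cup \Gamma_2(\bar X)$ at the threshold $\nu$. On $\Gamma_2$, $\phi(\|\bar X_i\|_p) = 1$, contributing $\sum_{i \in \Gamma_2} n_i$. Let $K := \sum_{i \in \Gamma_1} n_i$ count the "small" columns; then $\sum_{i \in \Gamma_2} n_i = \|\bar X\|_{p,0} - K \ge k - K$, so if $K = 0$ we are done, and if $K \ge k$ the concavity-based bound below already gives $\ge k$ on $\Gamma_1$ alone; so assume $1 \le K \le k-1$ roughly — more precisely I would argue $K \le k$ always suffices, using $1 \le K \le k$. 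The point is to lower bound $\sum_{i \in \Gamma_1} n_i \phi(\|\bar X_i\|_p)$ by $K$ minus a small error, so that the total is $\ge (k - K) + (K - \text{error}) = k - \text{error}$, and then force error $= 0$ by integrality. Concretely, using $\phi \ge \phi^{\text{CapL1}}$ on $(0,\nu)$ — where $\phi^{\text{CapL1}}(t) = \min\{t/\nu, 1\}$ — we get $\sum_{i \in \Gamma_1} n_i \phi(\|\bar X_i\|_p) \ge \tfrac1\nu \sum_{i\in\Gamma_1} n_i \|\bar X_i\|_p$. Now $\sum_{i \in \Gamma_1} n_i \|\bar X_i\|_p$ is related to $\operatorname{dist}_p(\bar X, Q_X^{k-K})$: zeroing out the $K$ smallest-weight small columns of $\bar X$ (or rather enough small columns so the $\ell_{p,0}$-norm drops to $\le k - K$) moves $\bar X$ into $Q_X^{k-K}$ at $p$-cost at most $\sum_{i \in \Gamma_1} n_i \|\bar X_i\|_p$ — here I would be careful that each small group $\bar X_i$ carries weight $n_i$ and contributes $n_i$ to the $\ell_{p,0}$-count, so removing the small groups drops the count by exactly $K$; thus $\operatorname{dist}_p(\bar X, Q_X^{\|\bar X\|_{p,0} - K}) \le \sum_{i\in\Gamma_1} n_i \|\bar X_i\|_p$, and since $\|\bar X\|_{p,0} \ge k$ and $Q_X^{k-K} \supseteq Q_X^{\|\bar X\|_{p,0}-K}$... actually I want the distance to $Q_X^{k-K}$, which is no larger, so $\operatorname{dist}_p(\bar X, Q_X^{k-K}) \le \sum_{i\in\Gamma_1} n_i\|\bar X_i\|_p$. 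Combining with $\bar X \in \Omega_X$ and the definition of $\bar\nu_X$: $\sum_{i\in\Gamma_1} n_i\|\bar X_i\|_p \ge \operatorname{dist}_p(\Omega_X, Q_X^{k-K}) \ge 2K\bar\nu_X \ge 2K\bar\nu > 2K\nu$. Therefore $\sum_{i\in\Gamma_1} n_i\phi(\|\bar X_i\|_p) \ge \tfrac1\nu \cdot 2K\nu = 2K > K$. Hence $\Phi(\bar X) \ge (k-K) + 2K = k + K > k \ge k$, and in particular $\Phi(\bar X) \ge k$. Symmetrically $\tilde\Phi(\bar Y) \ge k$, so $\tfrac12\Psi(\bar X,\bar Y) \ge k$, giving $v_\phi \ge k$. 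With the previous paragraph, $v_\phi = k$.

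Finally I would transfer the value equality to an equality of minimizer sets. From the second paragraph, every global minimizer $(X^*,Y^*)$ of \eqref{Q:p0} attains $\tfrac12\Psi(X^*,Y^*) \le k = v_\phi$ and is feasible, hence is a global minimizer of \eqref{Q:p_phi}. Conversely, let $(\hat X, \hat Y)$ be a global minimizer of \eqref{Q:p_phi}, so it is feasible for \eqref{Q:p0} and $\tfrac12\Psi(\hat X, \hat Y) = k$. Running the argument of the previous paragraph on $\hat X$, I showed that if any small group is present ($K \ge 1$) then $\Phi(\hat X) \ge k + K > k$, forcing $\tilde\Phi(\hat Y) < k$, which contradicts $\tilde\Phi(\hat Y) \ge k$; hence $K = 0$, i.e. $\Gamma_1(\hat X) = \emptyset$, so every nonzero group of $\hat X$ has $\|\hat X_i\|_p \ge \nu$ and thus $\phi(\|\hat X_i\|_p) = 1$, giving $\Phi(\hat X) = \|\hat X\|_{p,0}$. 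Likewise $\tilde\Phi(\hat Y) = \|\hat Y\|_{p,0}$. Therefore $\tfrac12(\|\hat X\|_{p,0} + \|\hat Y\|_{p,0}) = k = r^*$, so $(\hat X,\hat Y)$ is a global minimizer of \eqref{Q:p0}. This shows the two problems have the same global minimizers and the same optimal value $k$. The delicate points to get right are the exact bookkeeping of how removing a weighted group of columns changes $\|\cdot\|_{p,0}$ (each group $X_i$ counts with multiplicity $n_i$), and the case analysis on $K$ so that the $\bar\nu$-based strict inequality is available exactly when needed; the rest is the capped-function bound $\phi^{\text{CapL1}} \le \phi \le |\cdot|^0$ applied termwise.
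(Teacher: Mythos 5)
Your overall plan (upper bound $v_\phi\le k$ from $\phi\le|\cdot|^0$, lower bound $v_\phi\ge k$ from $\phi\ge\phi^{\mathrm{CapL1}}$ together with the definition of $\bar\nu$, then transfer of the value equality to the minimizer sets) is the same as the paper's, but the central step of your lower bound has a genuine gap. For a feasible $\bar X$ with $r:=\|\bar X\|_{p,0}>k$ and small-group weight $K=\sum_{i\in\Gamma_1(\bar X)}n_i\ge 1$, you claim $\operatorname{dist}_p\bigl(\bar X,Q_X^{k-K}\bigr)\le\sum_{i\in\Gamma_1}n_i\|\bar X_i\|_p$ because zeroing the small groups ``moves $\bar X$ into $Q_X^{k-K}$''. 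It does not: zeroing the small groups only lowers $\|\cdot\|_{p,0}$ from $r$ to $r-K$, and $r-K>k-K$ in exactly this case, so the resulting matrix need not lie in $Q_X^{k-K}$; to reach $Q_X^{k-K}$ one must additionally annihilate large groups, whose cost (each of norm at least $\nu$) is not controlled by the small-group mass. Your auxiliary claims point the wrong way as well: since $k-K\le r-K$ one has $Q_X^{k-K}\subseteq Q_X^{r-K}$, so the distance to $Q_X^{k-K}$ is no \emph{smaller} (not ``no larger'') than the distance to $Q_X^{r-K}$. Hence the chain $\sum_{i\in\Gamma_1}n_i\|\bar X_i\|_p\ge\operatorname{dist}_p\bigl(\Omega_X,Q_X^{k-K}\bigr)\ge 2K\bar\nu$ is unjustified (e.g.\ a feasible $\bar X$ with several groups of norm $\ge\nu$ and two tiny groups violates it), and your fallback remark that ``$K\ge k$ is handled by the bound on $\Gamma_1$ alone'' does not work either: when $K>k$ the index $K$ even lies outside the range $\{1,\dots,k\}$ allowed in the definition of $\bar\nu_X$, and the small groups by themselves can contribute arbitrarily little. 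The same flawed step is reused in your converse direction (the claim $\Phi(\hat X)\ge k+K$ forcing $\Gamma_1(\hat X)=\emptyset$), so both halves inherit the gap.

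The repair is precisely the paper's bookkeeping: index the comparison set by the weight of the \emph{large} groups, $r':=\sum_{i\in\Gamma_2(\bar X)}n_i$, and split on $r'$ versus $k$ rather than on $K$ versus $k$. If $r'\ge k$, then $\Phi(\bar X)\ge r'\ge k$ trivially. If $r'<k$, zeroing the small groups genuinely lands in $Q_X^{r'}$, so $\sum_{i\in\Gamma_1}n_i\|\bar X_i\|_p\ge\operatorname{dist}_p\bigl(\bar X,Q_X^{r'}\bigr)\ge\operatorname{dist}_p\bigl(\Omega_X,Q_X^{r'}\bigr)\ge(k-r')\bar\nu$, where the last step uses the definition of $\bar\nu$ with the admissible index $k-r'\in\{1,\dots,k\}$; combining with $\phi\ge\phi^{\mathrm{CapL1}}$ on $(0,\nu)$ gives $\Phi(\bar X)\ge\frac{1}{\nu}(k-r')\bar\nu+r'>k$. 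With this corrected inequality, your upper bound $v_\phi\le k$ and your transfer argument (every minimizer of \eqref{Q:p0} attains value $k$ in \eqref{Q:p_phi}; every minimizer of \eqref{Q:p_phi} must have $\Gamma_1=\emptyset$ and $\ell_{p,0}$-value $k$, hence solves \eqref{Q:p0}) go through and essentially reproduce the paper's proof.
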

\begin{proof}
$(i)$ Let $ \left( X^*,  Y^*\right) \in \R^{m\times n} \times \R^{n\times n}  $ be a global minimizer of \eqref{Q:p0} with 
$ \left\| X^* \right\|_{p,0}+\left\|  Y^* \right\|_{p,0}=2k $. We prove $\left( X^*,  Y^*\right)$ is also a global minimizer of \eqref{Q:p_phi} for any $0<\nu<\bar{\nu}$. Since the global optimality of \eqref{Q:p0} yields $ \left\| X \right\|_{p,0} \ge k $ and $ \left\| Y \right\|_{p,0} \ge k $ for any $  \left( X,  Y\right) \in \Omega  $ by Lemma \ref{lem:xyk}, we show the conclusion by two cases.

Case 1. $\|X\|_{p,0}=k$. It is easy to see that for any $i \in \Gamma(X)$,
\begin{equation*}
	\begin{aligned}
	\left\|X_{i}\right\|_p &\geq \left\|X_{j_0}\right\|_p=\min \left\lbrace \left\|X_{j}\right\|_p>0: j=1, \ldots, s\right\rbrace\\
	&=\operatorname{dist}_p\left(X, Q_X^{k-n_{j_0}}\right) \geq \operatorname{dist}\left(\Omega_X, Q_X^{k-1}\right) \geq \bar{\nu}>\nu,	
	\end{aligned}
\end{equation*}
which means that $\Phi(X)=k=\Phi\left(X^{*}\right)$.

Case 2. $\|X\|_{p,0}=r>k$. Without loss of generality, assume $\left\|X_{1}\right\|_p, \ldots,\left\|X_{i_1}\right\|_p \in(0, \nu)$, $\left\|X_{i_1+1}\right\|_p, \ldots,\left\|X_{i_2}\right\|_p \in[\nu, +\infty)$ and $\left\|X_{i_2+1}\right\|_p=\ldots=\left\|X_{s}\right\|_p=0$. If $r^{\prime}:=n_{i_1+1}+\ldots+n_{i_2}\ge k$, from $\phi(t)>0$ for $t>0$, we have $\Phi(X)>k$. Now assume $r^{\prime}<k$, we know that 
$$\frac{1}{k-r^{\prime}} \operatorname{dist}_p\left(\Omega_X, Q_X^{r^{\prime}}\right) \geq \bar{\nu}.$$ 
Together with
\begin{equation*}
	\begin{aligned}
	n_1\left\|X_{1}\right\|_p+\cdots+n_{i_1}\left\|X_{i_1}\right\|_p 
	&\geq \sqrt[\uproot{7}p]{n_1\left\|X_{1}\right\|_p^{p}+\cdots+n_{i_1}\left\|X_{i_1}\right\|_p^{p}} \\
	&\geq \operatorname{dist}_p\left(X, Q_X^{r^{\prime}}\right) \geq \operatorname{dist}_p\left(\Omega_X, Q_X^{r^{\prime}}\right),	
	\end{aligned}
\end{equation*}
we get 
\begin{equation}
	\begin{aligned}
		\Phi(X) &= n_1\phi\left(\left\|X_{1}\right\|_p\right)+\cdots+n_{i_1}\phi\left(\left\|X_{i_1}\right\|_p\right)+\cdots+n_{i_2}\phi\left(\left\|X_{i_2}\right\|_p\right) \\
		& \geq n_1\phi^{\mathrm{CapL} 1}\left(\left\|X_{1}\right\|_p\right)+\cdots+n_{i_1}\phi^{\mathrm{CapL} 1}\left(\left\|X_{i_1}\right\|_p\right)+r^{\prime} \\
		&=\frac{1}{\nu}\left(n_1\left\|X_{1}\right\|_p +\cdots+n_{i_1}\left\|X_{i_1}\right\|_p\right)+r^{\prime} \\
		& \geq \frac{1}{\nu} \operatorname{dist}_p\left(\Omega_X, Q_X^{r^{\prime}}\right)+r^{\prime}  \\ &\geq \frac{1}{\nu}\left(k-r^{\prime}\right) \bar{\nu}+r^{\prime} \\
		&>\frac{1}{\bar{\nu}}\left(k-r^{\prime}\right) \bar{\nu}+r^{\prime}=k.
	\end{aligned}
\end{equation}
The above two cases imply that $\Phi(X) \geq k=\Phi\left(X^{*}\right)$. Using similar ways in the proof for $ X $
above, we will obtain $\tilde\Phi(Y) \geq k=\tilde\Phi\left(Y^{*}\right)$. Hence $ \left( X^*,  Y^*\right) $ is also a global minimizer of \eqref{Q:p_phi}. Moreover, we have $\left\|X^{*}\right\|_{p,0}+\left\|Y^{*}\right\|_{p,0}=\Phi\left(X^{*}\right)+\tilde\Phi\left(Y^{*}\right)$ for each global minimizer $ \left( X^*,  Y^*\right) $ of \eqref{Q:p_phi}.

$(ii)$ Let $ \left( {\hat X},  {\hat Y}\right) $ be a global minimizer of \eqref{Q:p_phi} with $0<\nu<\bar{\nu}$. Assume on the contrary $ \left( {\hat X},  {\hat Y}\right) $ is not a solution of \eqref{Q:p0}. Let $ \left( X^*,  Y^*\right) $ be a global minimizer of \eqref{Q:p0}, that is, $ \left\| X^* \right\|_{p,0}=\left\|  Y^* \right\|_{p,0}=k $. By $\phi^{\mathrm{CapL} 1}(t) \leq \phi(t) \leq|t|^{0}$, we have $\Phi\left(X^*\right)\le \left\|X^*\right\|_{p,0}$ and $\tilde\Phi\left( Y^*\right)\le \left\| Y^*\right\|_{p,0}$. We may assume that $ \left\|  {\hat X} \right\|_{p,0}>k $, since $ \max\left\lbrace\left\| {\hat X} \right\|_{p,0},\left\|  {\hat Y}\right\|_{p,0} \right\rbrace>k  $ is obvious when $ \left( {\hat X}, {\hat Y}\right) $ is not a solution of \eqref{Q:p0}. Using similar ways in the proof for Case 2 above, we will obtain $\Phi\left({\hat X}\right)>k=\left\| X^*\right\|_{p,0} \geq \Phi\left( X^*\right)$ for any $0<\nu<\bar{\nu}$. Thus,
\[ \Phi\left({\hat X}\right)+\tilde\Phi\left({\hat Y}\right)>2k \ge \Phi\left( X^*\right)+\tilde\Phi\left( Y^*\right). \]
This contradicts the global optimality of $ \left( {\hat X}, {\hat Y}\right) $ for \eqref{Q:p_phi}. Hence $ \left({\hat X},  {\hat Y}\right) $ is a global minimizer of \eqref{Q:p0}.

Therefore, whenever $0<\nu<\bar{\nu}$, \eqref{Q:p0} and \eqref{Q:p_phi} have the same global minimizers and optimal values.
\end{proof}

\subsection{Link between problems \eqref{Q:p_phi} and \eqref{Q:p_reg}}

\begin{theorem}\label{th_phi_preg}
Suppose that $\phi$ is globally Lipschitz continuous on $[0, \nu]$. Then it holds that
\begin{itemize}
	\item[(1)] there exists a $\mu^{*}>0$ such that any global minimizer of \eqref{Q:p_phi} is a global minimizer of \eqref{Q:p_reg} whenever $\mu \geq \mu^{*}$;
	\item[(2)] if $\left({\bar X},{\bar Y}\right)$ is a global minimizer of \eqref{Q:p_reg} for some $\mu>\mu^{*}$, then $\left({\bar X},{\bar Y}\right)$ is a global minimizer of \eqref{Q:p_phi}.
\end{itemize}
\end{theorem}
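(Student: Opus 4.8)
The statement is an \emph{exact penalty} result: \eqref{Q:p_reg} is the penalization of \eqref{Q:p_phi}, and one wants a threshold $\mu^\star$ beyond which the two problems share global minimizers. Write $v^\star$ for the optimal value of \eqref{Q:p_phi}, attained at some $(X^\star,Y^\star)\in\Omega$; note that $F=0$ precisely on $\Omega$ and that $0\le\tfrac12\Psi(X,Y)\le n$ for all $(X,Y)$ because $\phi\le 1$ and $\sum_i n_i=n$. Since $\tfrac12\Psi(X^\star,Y^\star)+\mu F(X^\star,Y^\star)=v^\star$ for every $\mu$, claim (1) is equivalent to the single pointwise bound
\[
\tfrac12\Psi(X,Y)+\mu F(X,Y)\ \ge\ v^\star\qquad\text{for all }(X,Y)\text{ and all }\mu\ge\mu^\star .
\]
The plan is to obtain this from a \emph{calmness} property of the value function $p(\varepsilon):=\inf\{\tfrac12\Psi(X,Y):\|\mathscr{A}(XY^T)-\bm{b}\|_2^2\le\sigma^2+\varepsilon\}$ at $\varepsilon=0^+$, namely: there are $c,\varepsilon_0>0$ with $p(\varepsilon)\ge v^\star-c\varepsilon$ for $\varepsilon\in[0,\varepsilon_0]$. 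Granting it, put $\mu^\star:=\max\{c,\ v^\star/\varepsilon_0\}$: the bound is trivial on $\Omega$, and for $(X,Y)\notin\Omega$ with $\varepsilon:=F(X,Y)>0$ either $\varepsilon\ge\varepsilon_0$, so $\tfrac12\Psi+\mu F\ge\mu\varepsilon_0\ge v^\star$, or $\varepsilon<\varepsilon_0$, so $(X,Y)$ competes in the infimum defining $p(\varepsilon)$ and hence $\tfrac12\Psi(X,Y)+\mu F(X,Y)\ge p(\varepsilon)+\mu\varepsilon\ge v^\star+(\mu-c)\varepsilon\ge v^\star$.

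Part (2) then follows quickly from (1). By (1) the optimal value of \eqref{Q:p_reg} equals $v^\star$ whenever $\mu\ge\mu^\star$. Let $(\bar X,\bar Y)$ be a global minimizer of \eqref{Q:p_reg} for some $\mu>\mu^\star$, so $\tfrac12\Psi(\bar X,\bar Y)+\mu F(\bar X,\bar Y)=v^\star$; applying the pointwise bound of (1) at the penalty parameter $\mu^\star$ gives $\tfrac12\Psi(\bar X,\bar Y)+\mu^\star F(\bar X,\bar Y)\ge v^\star$. Subtracting, $(\mu^\star-\mu)F(\bar X,\bar Y)\ge 0$, and since $\mu>\mu^\star$ this forces $F(\bar X,\bar Y)=0$. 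Thus $(\bar X,\bar Y)\in\Omega$ with $\tfrac12\Psi(\bar X,\bar Y)=v^\star$, i.e. $(\bar X,\bar Y)$ is a global minimizer of \eqref{Q:p_phi}.

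Everything therefore rests on the calmness claim: given $(X,Y)$ with small excess $\varepsilon=F(X,Y)$, construct $(X',Y')\in\Omega$ with $\tfrac12\Psi(X',Y')\le\tfrac12\Psi(X,Y)+c\varepsilon$. First I would record that ``$\phi$ globally Lipschitz on $[0,\nu]$'', together with $\phi\equiv 1$ on $[\nu,\infty)$ and $\phi$ increasing, makes $\phi$ globally $L$-Lipschitz on $[0,\infty)$, so that $\Phi$, $\tilde\Phi$ and $\Psi$ are Lipschitz continuous (on bounded sets). A feasible $(X',Y')$ would then be produced either by moving the bilinear image $XY^T$ a distance $O(\varepsilon)$ toward a (strictly) feasible low-rank matrix, or by projecting $X$ onto the convex slice $\{X':\|\mathscr{A}(X'Y^T)-\bm{b}\|_2\le\sigma\}$ for fixed $Y$ — with the displacement controlled by an error bound for the (nonconvex) feasibility system — and the Lipschitz estimate then converts an $O(\varepsilon)$ move into an $O(\varepsilon)$ increase of $\tfrac12\Psi$. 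The hard part, and where I expect the real effort to go, is twofold: (i) $\Psi$ is \emph{not} coercive (the cap makes it bounded by $n$), so near-feasible low-value configurations need not be bounded, and one must first pass to ``balanced and pruned'' factorizations by exploiting the saturation of $\phi$ at $\nu$; and (ii) obtaining the modulus $c$ \emph{uniformly} in $(X,Y)$ — an error-bound statement for the nonconvex set $\Omega$, which is where surjectivity / a Slater-type regularity of $\mathscr{A}$ (or the structure inherited from the equivalences of Theorems \ref{th_p_p0} and \ref{th_p0_phi}) must be used. This mirrors the exact-penalty scheme for capped folded concave group-sparse problems in \cite{PC21}.
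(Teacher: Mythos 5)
Your reduction of both claims to the single pointwise bound $\tfrac12\Psi(X,Y)+\mu F(X,Y)\ge v^\star$, and your derivation of (2) from (1) by comparing that bound at $\mu^{*}$ with equality at $\mu>\mu^{*}$ to force $F(\bar X,\bar Y)=0$, are correct, and they follow the same exact-penalty scheme the paper uses: the paper's entire proof consists of noting that Lipschitz continuity of $\phi$ on $[0,\nu]$ makes $\Psi$ globally Lipschitz and then invoking \cite[Lemma 3.1]{CLP16}. The genuine gap in your write-up is that the calmness claim $p(\varepsilon)\ge v^\star-c\varepsilon$, which you yourself identify as carrying all the analytic content, is never established: as written, part (1) is conditional on an unproven error-bound-type statement, and your closing paragraph only lists candidate strategies (balanced/pruned factorizations, Slater-type regularity of $\mathscr{A}$) without carrying any of them out.

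The missing modulus is exactly what the paper supplies elsewhere: the global error bound \eqref{eq:beta}, $\operatorname{dist}_p(Z,\Omega)\le\beta F(X,Y)$ for all $Z=(X,Y)$, quoted in Subsection 3.4 from \cite[Theorem 3.1]{LL94}. Since $\phi$ is Lipschitz on $[0,\nu]$, increasing, and constant $\equiv 1$ on $[\nu,\infty)$, it is Lipschitz on all of $[0,\infty)$, so $\tfrac12\Psi$ is globally Lipschitz with some constant $L$ in the norm defining $\operatorname{dist}_p$; then for any $(X,Y)$ with $F(X,Y)\le\varepsilon$ there is $(X',Y')\in\Omega$ within distance $\beta\varepsilon$, giving $\tfrac12\Psi(X,Y)\ge\tfrac12\Psi(X',Y')-L\beta\varepsilon\ge v^\star-L\beta\varepsilon$. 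This is your calmness inequality with $c=L\beta$, valid for all $\varepsilon\ge 0$, so the truncation at $\varepsilon_0$ and the $v^\star/\varepsilon_0$ term in your $\mu^{*}$ become unnecessary, and your worry about non-coercivity disappears because the error bound is global and requires no boundedness of $(X,Y)$. Two minor points remain: attainment of $v^\star$ at some $(X^\star,Y^\star)\in\Omega$ should be justified (it follows from the equivalence with \eqref{Q:p0} in Theorem \ref{th_p0_phi}, since the $\ell_{p,0}$ objective takes finitely many values), and the linear error bound \eqref{eq:beta} for this nonconvex feasibility system is itself imported from the literature rather than proved, so your proof would be exactly as complete as the paper's once you cite it explicitly instead of leaving the modulus as an open step.
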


\begin{proof}
Since $\phi$ is globally Lipschitz continuous on $[0, \nu]$, $\Psi\left(X,Y\right)$ is globally Lipschitz continuous on $[0, \nu]$. Similar to \cite[Lemma 3.1]{CLP16}, we can easily obtain (1) and (2).	
\end{proof}

\subsection{Link between problems \eqref{Q:p0} and \eqref{Q:p_reg}}

\subsubsection{stationary point of \eqref{Q:p_reg}.}
Let $f: \mathbb{R}^{m\times n} \rightarrow \mathbb{R}$ be locally Lipschitz continuous and directionally differentiable at point $X \in \mathbb{R}^{m\times n}$. The directional derivative of $f$ along a matrix $W \in \mathbb{R}^{m\times n}$ at $X$ is defined by
$$
f^{\prime}(X ; W):=\lim _{\tau \downarrow 0} \frac{f(X+\tau W)-f(X)}{\tau}.
$$
If $f$ is differentiable at $X$, then $f^{\prime}(X ; W)=\langle\nabla f(X), W\rangle$. Denote $\bar Z:=({\bar X},{\bar Y})$, by simple computation, we have
$$
F^{\prime}\left({\bar Z} ;Z-{\bar Z}\right)= \begin{cases}0, & \text { if }\left\|\mathscr{A}\left({\bar X} {\bar Y}^T\right) -\bm{b}\right\|_F^{2}<\sigma^{2}, \\
	\max \left\{0,\Delta_1+\Delta_2\right\}, & \text { if }\left\|\mathscr{A}\left({\bar X} {\bar Y}^T\right) -\bm{b}\right\|_F^{2}=\sigma^{2}, \\
	\Delta_1+\Delta_2, & \text { otherwise},\end{cases}
$$
where
\begin{equation*}
	\begin{aligned}
		\Delta_1&=2\left\langle\mathscr{A}\left({\bar X} {\bar Y}^T\right) -\bm{b}, \mathscr{A}\left(\left(X-{\bar X}\right){\bar Y}^T\right)\right\rangle,
		\\ \Delta_2&=2\left\langle\mathscr{A}\left({\bar X} {\bar Y}^T\right) -\bm{b}, \mathscr{A}\left({\bar X}\left(Y-{\bar Y}\right)^T\right)\right\rangle.		
	\end{aligned}
\end{equation*}
\iffalse
Next, we consider the directional derivative of $L_{2}$ norm and $F(X,Y)$. Denote $l(X):=\|X\|_2$. By simple computation, 
$$
l^{\prime}\left(X^{*} ; X-X^{*}\right)= \begin{cases}\|X\|_2, & \left\|X^{*}\right\|_2=0, \\ \frac{\left\langle X^{*}, X-X^{*}\right\rangle}{\left\|X^{*}\right\|_2}, & \left\|X^{*}\right\|_2\neq0.\end{cases}
$$
Then by [41, Exercise 8.31], the directional derivative of $F$ at $\bar Z:=(\bar X,\bar Y)$ has the form
\fi
\begin{defi}\label{def:sp}
We say that $ {\bar Z}:=\left( {\bar X},  {\bar Y}\right) \in \R^{m\times n} \times \R^{n\times n} $ is a stationary point of \eqref{Q:p_reg} if
$$
\Phi^{\prime}\left({\bar X} ; X-{\bar X}\right)+\tilde\Phi^{\prime}\left({\bar Y}; Y-{\bar Y}\right)+\mu F^{\prime}\left({\bar Z}; Z-{\bar Z}\right) \geq 0, \quad \forall \left( X,  Y\right) \in \R^{m\times n} \times \R^{n\times n}.
$$
\end{defi}

\subsubsection{Characterizations of lifted stationary points of \eqref{Q:p_reg}}
We obtain from \cite[Theorem 3.1]{LL94} that there exists a $\beta>0$ such that for all $Z=\left( X,  Y\right) \in \R^{m\times n} \times \R^{n\times n}$, we have
\begin{equation}\label{eq:beta}
\operatorname{dist}_p(Z, \Omega) \leq \beta\left(\left\|\mathscr{A}\left( {X{Y^T}} \right) - \bm{b}\right\|_F^2-\sigma^2\right)_+=\beta F(X,Y).	
\end{equation} 
Let $L\left(X,Y\right)=2\|\mathscr{A}^*\|_q\|\mathscr{A}\left( {X{Y^T}} \right) - \bm{b}\|_q\|Y\|_q $ with 
$$
\frac{1}{p}+\frac{1}{q}=1,\quad \|\mathscr{A}^*\|_q:=\operatorname{sup}\limits _{\left\|x\right\|_q=1}\left\|\mathscr{A}^*\left(x\right)\right\|_q.
$$

Since $\phi_{-}^{\prime}(\nu) \rightarrow \infty$ as $\nu \rightarrow 0$, for any $\Upsilon>\sigma$ and $\mu>0$, there are $\left({\hat X}, {\hat Y}\right)$ and a sufficiently small $\nu>0$ such that 
$\left\|\mathscr{A}\left( {\hat X}{\hat Y}^T \right) - \bm{b}\right\|_2 \geq \Upsilon$ and $\phi_{-}^{\prime}(\nu)>\mu L\left({\hat X}, {\hat Y}\right)$. In the rest of this paper, we choose $\Upsilon, \nu, \mu$ and $\left({\hat X}, {\hat Y}\right)$ satisfying
$$
\left\|\mathscr{A}\left( {{\hat X}{{\hat Y}^T}} \right) - \bm{b}\right\|_2 \geq \Upsilon, \quad \mu>\frac{\beta}{\bar{\nu}}, \quad \phi_{-}^{\prime}(\nu)>\lambda L\left({\hat X}, {\hat Y}\right).
$$
We then show a lower bound property of the lifted stationary points of \eqref{Q:p_reg}.
\begin{lemma}\label{lem:bound}
Let $\left( {\bar X},  {\bar Y}\right) \in \R^{m\times n} \times \R^{n\times n}$ be a stationary point of \eqref{Q:p_reg} satisfying $\left\|\mathscr{A}\left( {{\bar X}{{\bar Y}^T}} \right) - \bm{b}\right\|_2 \leq \Upsilon$ and $\phi_{-}^{\prime}(\nu)>\lambda L\left({\hat X}, {\hat Y}\right)$. Then for $ i=1,\ldots,s $, we have
$$
\left\{ \begin{array}{l}
	{\left\| {{{\bar X}_i}} \right\|_p} \ge \nu\\
	{\left\| {{{\bar Y}_i}} \right\|_p} \ge \nu
\end{array} \right. \quad \text{or} \quad
\left\{ \begin{array}{l}
	{\left\| {{{\bar X}_i}} \right\|_p} =0\\
	{\left\| {{{\bar Y}_i}} \right\|_p} =0
\end{array}. \right.
$$
\end{lemma}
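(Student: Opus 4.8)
\textbf{Proof proposal for Lemma \ref{lem:bound}.}

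The plan is to argue by contradiction, using the stationarity inequality in Definition \ref{def:sp} with a cleverly chosen perturbation direction that shrinks a single group to zero. First I would fix an index $i \in [s]$ and suppose, toward a contradiction, that the conclusion fails for this $i$; by the symmetry of the roles of $X$ and $Y$ it is enough to treat the case $0 < \left\|{\bar X}_i\right\|_p < \nu$ (the case $0 < \left\|{\bar Y}_i\right\|_p < \nu$ being analogous, and the mixed case where one of them is zero and the other lies in $(0,\nu)$ being handled the same way since $\phi(0)=0$ forces the nonzero one to still contribute a directional derivative term). The test point I would use is $Z = ({\bar X} - t E, {\bar Y})$ where $E$ agrees with ${\bar X}$ on the columns of group $i$ and is zero elsewhere, for small $t \in (0,1)$; equivalently, scale the $i$-th group of ${\bar X}$ by $(1-t)$. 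Then $\Phi^{\prime}({\bar X}; -tE) = -t\, n_i\, \phi_-^{\prime}(\left\|{\bar X}_i\right\|_p)\left\|{\bar X}_i\right\|_p$ by the chain rule for directional derivatives (using that $\phi$ is differentiable on $(0,\nu)$ and $0<\left\|{\bar X}_i\right\|_p<\nu$, together with the directional derivative of the $\ell_p$-norm along the radial direction), while $\tilde\Phi^{\prime}({\bar Y};0)=0$.

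Next I would bound the term $\mu F^{\prime}({\bar Z}; Z - {\bar Z})$ from above. Using the explicit formula for $F^{\prime}$ preceding Definition \ref{def:sp}, in every case $F^{\prime}({\bar Z}; Z-{\bar Z}) \le \max\{0, \Delta_1 + \Delta_2\}$ with $\Delta_2 = 0$ here and $\Delta_1 = 2\langle \mathscr{A}({\bar X}{\bar Y}^T) - \bm b,\ \mathscr{A}(-tE\,{\bar Y}^T)\rangle = -2t\langle \mathscr{A}({\bar X}{\bar Y}^T)-\bm b,\ \mathscr{A}(E{\bar Y}^T)\rangle$. By Hölder/Cauchy--Schwarz together with the definition of $\|\mathscr{A}^*\|_q$ and the fact that $E{\bar Y}^T$ involves only group $i$, I would estimate $|\Delta_1| \le 2t\,\|\mathscr{A}^*\|_q\,\|\mathscr{A}({\bar X}{\bar Y}^T)-\bm b\|_q\,\|{\bar Y}\|_q \cdot \left\|{\bar X}_i\right\|_p$, which is exactly $t\, L({\bar X},{\bar Y})\left\|{\bar X}_i\right\|_p$ — this is the point of the definition of $L$. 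Hence $F^{\prime}({\bar Z};Z-{\bar Z}) \le t\, L({\bar X},{\bar Y})\left\|{\bar X}_i\right\|_p$, and since $\left\|\mathscr{A}({\bar X}{\bar Y}^T)-\bm b\right\|_2 \le \Upsilon$ the relevant Lipschitz constant $L({\bar X},{\bar Y})$ is controlled; combined with $\phi_-^{\prime}(\nu) > \lambda L({\hat X},{\hat Y})$ I would push through the inequality $\phi_-^{\prime}(\left\|{\bar X}_i\right\|_p) \ge \phi_-^{\prime}(\nu) > \mu L({\bar X},{\bar Y})$ (using concavity of $\phi$, which makes $\phi^{\prime}$ nonincreasing so that the derivative at a point below $\nu$ dominates $\phi_-^{\prime}(\nu)$, and matching the normalization $\mu$ vs.\ $\lambda$ in the standing assumptions). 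Plugging into the stationarity inequality divided by $t n_i > 0$ yields
$$
-\phi_-^{\prime}\!\left(\left\|{\bar X}_i\right\|_p\right)\left\|{\bar X}_i\right\|_p + \tfrac{\mu}{n_i} L({\bar X},{\bar Y})\left\|{\bar X}_i\right\|_p \ge 0,
$$
i.e. $\phi_-^{\prime}(\left\|{\bar X}_i\right\|_p) \le \tfrac{\mu}{n_i} L({\bar X},{\bar Y})$, contradicting the strict inequality just established (after absorbing the harmless $1/n_i \le 1$).

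Finally, I would observe that the contradiction rules out $0 < \left\|{\bar X}_i\right\|_p < \nu$, and the symmetric argument rules out $0 < \left\|{\bar Y}_i\right\|_p < \nu$; it remains to exclude the mixed possibility where, say, $\left\|{\bar X}_i\right\|_p \ge \nu$ but $0 < \left\|{\bar Y}_i\right\|_p < \nu$ (or vice versa) — but this is precisely the one-sided argument applied to the $Y$-block alone, since shrinking only the $i$-th group of ${\bar Y}$ leaves the $X$-contribution untouched, so the same computation gives a contradiction. Therefore for each $i$ either both norms are $\ge \nu$ or both are $0$, which is the claim. I expect the main obstacle to be the bookkeeping in the directional-derivative computation of $\Phi$ at a point where some other groups may have norm exactly $\nu$ or $0$: one must check that perturbing group $i$ alone does not create hidden first-order contributions from those groups (it does not, because $\phi$ is locally constant at and beyond $\nu$ and the perturbation is supported on group $i$), and one must be careful that $\left\|{\bar X}_i\right\|_p>0$ guarantees the radial directional derivative of $\|\cdot\|_p$ equals $\left\|{\bar X}_i\right\|_p$ rather than something requiring differentiability of the $\ell_p$-norm itself. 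The second delicate point is reconciling the two constants $\mu > \beta/\bar\nu$ and $\phi_-^{\prime}(\nu) > \lambda L({\hat X},{\hat Y})$ in the standing assumptions (the $\lambda$ vs.\ $\mu$ discrepancy looks like a typo for $\mu$) and confirming that $L({\bar X},{\bar Y}) \le L({\hat X},{\hat Y})$-type control follows from $\|\mathscr{A}({\bar X}{\bar Y}^T)-\bm b\|_2 \le \Upsilon$ — this may need an additional a priori bound on $\|{\bar Y}\|_q$ for stationary points, which I would extract from the finiteness of the objective in \eqref{Q:p_reg}.
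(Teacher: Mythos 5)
Your proposal is correct and follows essentially the same route as the paper's proof: argue by contradiction for a group with $0<\left\|{\bar X}_i\right\|_p<\nu$, radially shrink that group alone (the paper uses $X_i={\bar X}_i-\tfrac{\varepsilon}{n_i}{\bar X}_i$, you use the factor $(1-t)$, a cosmetic difference), bound $\mu F^{\prime}$ via $\Delta_1$ with H\"older and $\|\mathscr{A}^*\|_q$, invoke concavity to get $\phi^{\prime}\left(\left\|{\bar X}_i\right\|_p\right)\ge\phi_{-}^{\prime}(\nu)$, and contradict $\phi_{-}^{\prime}(\nu)>\mu L$. Your side remarks are also on target: the $\lambda$ versus $\mu$ mismatch is indeed a typo in the paper, and the control $2\left\|\mathscr{A}^*\left(\mathscr{A}({\bar X}{\bar Y}^T)-\bm{b}\right){\bar Y}_i\right\|_q\le L({\hat X},{\hat Y})$ is glossed over in the paper exactly at the point you flag.
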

\begin{proof}
To prove this Lemma, we only need to show $\Gamma_{1}\left(\bar X\right)\cup \Gamma_{1}\left(\bar Y\right) =\emptyset$. Assume on contradiction that $\Gamma_{1}\left(\bar X\right)\cup \Gamma_{1}\left(\bar Y\right) \neq\emptyset$. Might as well set $\Gamma_{1}\left(\bar X\right)\neq\emptyset$. From Definition \ref{def:sp}, we have the following inequality for any $\left( X,  Y\right)$ satisfying $ Y=\bar Y $, $X_{j}={\bar X}_{j}$ for all $j \notin \Gamma_{1}\left(\bar X\right)$ and for which $\exists\, i \in \Gamma_{1}\left(\bar X\right)$ such that $X_{i}= {\bar X}_{i}- \frac{\varepsilon}{n_i}{\bar X}_{i}$ with $\varepsilon>0$,
\begin{equation*}
\begin{aligned}
	&\varepsilon\phi^{\prime}\left(\left\|{\bar X}_{i}\right\|_p\right)\left\|{\bar X}_{i}\right\|_p\le\mu F^{\prime}\left({\bar Z} ; Z-{\bar Z}\right)\le\mu \left|\Delta_{1}+\Delta_{2}\right|\\
	\le&2\mu\varepsilon\left\|\mathscr{A}^*\left(\mathscr{A}\left({\bar X} {\bar Y}^T\right) -\bm{b}\right){\bar Y}_i\right\|_q\left\|{\bar X}_i\right\|_p.
\end{aligned}
\end{equation*}
Thus,
\begin{equation*}
\begin{aligned}
\phi_{-}^{\prime}(\nu)\le\phi^{\prime}\left(\left\|{\bar X}_{i}\right\|_p\right)\le2\mu \left\|\mathscr{A}^*\left(\mathscr{A}\left({\bar X} {\bar Y}^T\right) -\bm{b}\right){\bar Y}_i\right\|_q \le\mu L\left({\hat X}, {\hat Y}\right),	
\end{aligned}
\end{equation*}
where the first inequality follows from $i \in \Gamma_{1}\left(\bar X\right)$.%, the second inequality follows from $ \Delta_1\le 2\varepsilon\left\|\mathscr{A}^*\left(\mathscr{A}\left({\bar X} {\bar Y}^T\right) -\bm{b}\right){\bar Y}_i\right\|_q\left\|{\bar X}_i\right\|_p $ and $ \Delta_2=0 $.

This contradicts the condition of $\phi_{-}^{\prime}(\nu)>\mu L\left({\hat X}, {\hat Y}\right)$. The proof is completed.
\end{proof}

\begin{theorem}\label{th_p0_preg}
Let $\mu>\frac{\beta}{\bar{\nu}}$ and $\phi_{-}^{\prime}(\nu)>\mu L\left({\hat X}, {\hat Y}\right)$. Then the following statements hold:
\begin{itemize}
	\item[(1)] If $\left( {\bar X},  {\bar Y}\right)$ is a global minimizer of \eqref{Q:p_reg} with $\left\|\mathscr{A}\left( {{\bar X}{{\bar Y}^T}} \right) - \bm{b}\right\|_2 \leq \Upsilon$, then $\left( {\bar X},  {\bar Y}\right)$ is a global minimizer of \eqref{Q:p0};
	\item[(2)] If $\left(X^*, Y^*\right)$ is a global minimizer of \eqref{Q:p0} and \eqref{Q:p_reg} has a global minimizer $\left( {\bar X},  {\bar Y}\right)$ with $\left\|\mathscr{A}\left( {{\bar X}{{\bar Y}^T}} \right) - \bm{b}\right\|_2 \leq \Upsilon$, then $\left(X^*, Y^*\right)$ is a global minimizer of \eqref{Q:p_reg}.
\end{itemize}
\end{theorem}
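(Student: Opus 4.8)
The plan is to reduce both parts to a single feasibility fact: a global minimizer $\bar Z=(\bar X,\bar Y)$ of \eqref{Q:p_reg} with $\|\mathscr{A}(\bar X\bar Y^T)-\bm b\|_2\le\Upsilon$ is automatically feasible for \eqref{Q:p0}. Once that is known, Lemma \ref{lem:xyk} promotes feasibility to global optimality for \eqref{Q:p0}, giving (1), and (2) then follows by a short optimal-value comparison.

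For Part (1), the first step is to observe that a global minimizer of \eqref{Q:p_reg} is a stationary point in the sense of Definition \ref{def:sp} (take the one-sided limit of the optimality inequality $\tfrac12\Psi(\bar Z)+\mu F(\bar Z)\le\tfrac12\Psi(\bar Z+\tau W)+\mu F(\bar Z+\tau W)$ as $\tau\downarrow0$), so Lemma \ref{lem:bound} applies under the standing hypotheses $\|\mathscr{A}(\bar X\bar Y^T)-\bm b\|_2\le\Upsilon$ and $\phi_-'(\nu)>\mu L(\hat X,\hat Y)$. The resulting dichotomy — for each $i$, either $\|\bar X_i\|_p,\|\bar Y_i\|_p\ge\nu$ or $\|\bar X_i\|_p=\|\bar Y_i\|_p=0$ — forces $\bar X$ and $\bar Y$ to share a group support and makes $\phi$ take only the values $0$ and $1$ on the blocks, so that $\tfrac12\Psi(\bar Z)=\|\bar X\|_{p,0}=\|\bar Y\|_{p,0}=:\kappa\in\mathbb{Z}_{\ge0}$. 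Comparing with a global minimizer $Z^*=(X^*,Y^*)$ of \eqref{Q:p0} (which is feasible, with $\|X^*\|_{p,0}=\|Y^*\|_{p,0}=k$) and using $\phi(t)\le|t|^0$ gives $\kappa+\mu F(\bar Z)\le\tfrac12\Psi(Z^*)\le k$.

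The heart of the proof is then ruling out $F(\bar Z)>0$. If it held, then $\kappa<k$, so $K:=k-\kappa\in\{1,\dots,k\}$; the Hoffman-type error bound \eqref{eq:beta} supplies, for any $\varepsilon>0$, a point $\tilde Z=(\tilde X,\tilde Y)\in\Omega$ with $\|\tilde Z-\bar Z\|_p\le\beta F(\bar Z)+\varepsilon$, and since $\bar X\in Q_X^{\kappa}$ and $\tilde X\in\Omega_X$ I would chain
$$2K\bar\nu\le 2K\bar\nu_X\le\operatorname{dist}_p(\Omega_X,Q_X^{\kappa})\le\operatorname{dist}_p(\tilde X,Q_X^{\kappa})\le\|\tilde X-\bar X\|_p\le\beta F(\bar Z)+\varepsilon,$$
using the definition of $\bar\nu_X$ at index $K$ (note $k-K=\kappa$). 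Letting $\varepsilon\downarrow0$ yields $F(\bar Z)\ge 2K\bar\nu/\beta$, hence $\mu F(\bar Z)>(\beta/\bar\nu)\cdot 2K\bar\nu/\beta=2K$, contradicting $\mu F(\bar Z)\le k-\kappa=K$ (as $K\ge1$). Therefore $F(\bar Z)=0$, so $\bar Z\in\Omega$; Lemma \ref{lem:xyk} then gives $\kappa\ge k$, and with $\kappa\le k$ we get $\kappa=k$, i.e.\ $\bar Z$ is a global minimizer of \eqref{Q:p0}. Part (2) is then immediate: by Part (1) the optimal value of \eqref{Q:p_reg} equals $\tfrac12\cdot2k+\mu\cdot0=k$, while for the global minimizer $Z^*$ of \eqref{Q:p0} one has $F(Z^*)=0$ and $\Psi(Z^*)\le\|X^*\|_{p,0}+\|Y^*\|_{p,0}=2k$, so $\tfrac12\Psi(Z^*)+\mu F(Z^*)\le k$; hence $Z^*$ attains the optimal value and is a global minimizer of \eqref{Q:p_reg}.

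I expect the main obstacle to be precisely the exact-penalty chain above: making the constants close at the stated threshold $\mu>\beta/\bar\nu$ relies on the factor $2$ built into $\bar\nu_X=\min_K\frac1{2K}\operatorname{dist}_p(\Omega_X,Q_X^{k-K})$, and one must be careful that (i) the product $\ell_p$ norm used in \eqref{eq:beta} dominates $\|\tilde X-\bar X\|_p$, and (ii) Lemma \ref{lem:bound} is legitimately applicable, i.e.\ the global minimizer really is a stationary point even though $\Phi$ may have an infinite one-sided directional derivative at a zero block (in which case Definition \ref{def:sp} holds trivially along such directions).
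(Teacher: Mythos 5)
Your proof is correct and follows essentially the same route as the paper's: stationarity of the global minimizer plus Lemma \ref{lem:bound} to identify $\Phi,\tilde\Phi$ with $\|\cdot\|_{p,0}$, a penalty-value comparison against a global minimizer of \eqref{Q:p0}, and the error bound \eqref{eq:beta} with the definition of $\bar{\nu}$ and $\mu>\beta/\bar{\nu}$ to rule out $F(\bar Z)>0$. The differences are only organizational — you streamline the paper's case analysis via $\kappa$ and Lemma \ref{lem:xyk}, prove (2) by a direct value comparison instead of contradiction, and retain the factor $\tfrac{1}{2}$ in the objective of \eqref{Q:p_reg} that the paper's proof silently drops — none of which change the substance.
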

\begin{proof}
(1) Since $\left( {\bar X},  {\bar Y}\right)$ is a global minimizer of \eqref{Q:p_reg} and the objective function is locally Lipschitz continuous, $\left( {\bar X},  {\bar Y}\right)$ is a stationary point of $\eqref{Q:p_reg}$. From $\left\|\mathscr{A}\left( {{\bar X}{{\bar Y}^T}} \right) - \bm{b}\right\|_2 \leq\Upsilon$ and Lemma \ref{lem:bound}, $\Phi\left(\bar X\right)=\left\|\bar X\right\|_{p,0}$ and $\tilde\Phi\left(\bar Y\right)=\left\|\bar Y\right\|_{p,0}$. Assume now that $\left( \bar X,  \bar Y\right)$ is not a global minimizer of \eqref{Q:p0} and $\left( X^*,  Y^*\right)$ with $\left\|X^*\right\|_{p,0}+\left\|Y^*\right\|_{p,0}=2k$ is a global minimizer of \eqref{Q:p0}.		

We split the proof into two cases.
\begin{itemize}
	\item If $  \left( \bar X,  \bar Y\right) \in \Omega  $, then 
	$$\left\|X^*\right\|_{p,0}+\left\|Y^*\right\|_{p,0} <\left\|\bar X\right\|_{p,0}+\left\|\bar Y\right\|_{p,0}.$$ 
	Thanks to  $ F\left( X^*,  Y^*\right)=0  $, we have
	\begin{equation*}
	\begin{aligned}
	&\Phi\left(X^*\right)+\tilde\Phi\left(Y^*\right)+\mu F\left(X^*,Y^*\right) \leq\left\|X^*\right\|_{p,0}+\left\|Y^*\right\|_{p,0}+\mu F\left(X^*,Y^*\right)\\
	=&\left\|X^*\right\|_{p,0}+\left\|Y^*\right\|_{p,0}<\left\|\bar X\right\|_{p,0}+\left\|\bar Y\right\|_{p,0} = \Phi\left(\bar X\right)+\tilde\Phi\left(\bar Y\right)+\mu F\left(\bar X,\bar Y\right),		
	\end{aligned}
	\end{equation*}
     which contradicts the global optimality of $ \left( \bar X,  \bar Y\right) $ for \eqref{Q:p_reg}.
	
	\item If $  \left( \bar X,  \bar Y\right) \notin \Omega  $, then $ F\left(\bar X,\bar Y\right)>0 $. Then, we distinguish two cases.
	\begin{itemize}
		\item if $\left\|\bar X\right\|_{p,0}+\left\|\bar Y\right\|_{p,0}\ge2k$, we obtain that
			\begin{equation*}
			\begin{aligned}
				&\Phi\left(X^*\right)+\tilde\Phi\left(Y^*\right)+\mu F\left(X^*,Y^*\right)\\ \leq&\left\|X^*\right\|_{p,0}+\left\|Y^*\right\|_{p,0}+\mu F\left(X^*,Y^*\right)\\
				=&2k< \Phi\left(\bar X\right)+\tilde\Phi\left(\bar Y\right)+\mu F\left(\bar X,\bar Y\right),		
			\end{aligned}
		\end{equation*}
		which contradicts the global optimality of $ \left( \bar X,  \bar Y\right) $ for \eqref{Q:p_reg}.
		\item if $\left\|\bar X\right\|_{p,0}+\left\|\bar Y\right\|_{p,0}<2k$, then $ \min\left\lbrace\left\|\bar X\right\|_{p,0}, \left\|\bar Y\right\|_{p,0}\right\rbrace <k$, might as well set $$ \left\|\bar X\right\|_{p,0}=\min\left\lbrace\left\|\bar X\right\|_{p,0}, \left\|\bar Y\right\|_{p,0}\right\rbrace=k'<k. $$ Thus
		\begin{equation*}
		\begin{aligned}
		2\bar{\nu} \leq& \frac{1}{k-k^{\prime}} \operatorname{dist}_p\left(\Omega_{X}, Q_{\bar X}^{k^{\prime}}\right)
        \leq \frac{1}{k-k^{\prime}} \operatorname{dist}_p\left(\Omega_{ X}, \bar X\right)\\
        \leq &\frac{1}{k-k^{\prime}} \operatorname{dist}_p\left(\Omega, \left(\bar X,\bar Y\right)\right)
         \leq \frac{\beta F\left(\bar X,\bar Y\right)}{k-k^{\prime}},	
		\end{aligned}
		\end{equation*}
		where the first inequality follows from the definition of $ \bar{\nu} $ and the last inequality uses \eqref{eq:beta}.
		This together with $ \mu>\beta/\bar{\nu} $ implies that
		\begin{equation*}
		\begin{aligned}
		&\Phi\left(\bar X\right)+\tilde\Phi\left(\bar Y\right)+\mu F\left(\bar X,\bar Y\right)> 2k+\frac{\beta}{\bar{\nu}}F\left(\bar X,\bar Y\right)	\\
		\ge&2k=\left\|X^*\right\|_{p,0}+\left\|Y^*\right\|_{p,0}+\mu F\left(X^*,Y^*\right)\\
		\ge&\Phi\left(X^*\right)+\tilde\Phi\left(Y^*\right)+\mu F\left(X^*,Y^*\right).
		\end{aligned}
		\end{equation*}
		%where the last equality follows from $ \left\|\mathscr{A}\left( {X'{Y'^T}} \right) - b\right\|_2^2 \leq \sigma^{2}<\Upsilon^{2}$ and Lemma \ref{lem:bound}. 
		This contradicts the global optimality of $ \left( \bar X,  \bar Y\right) $ for \eqref{Q:p_reg}.
	\end{itemize}	
\end{itemize}
This shows that $ \left( \bar X,  \bar Y\right) $ is a global minimizer of \eqref{Q:p0}.

(2) Suppose that $ \left(X^*,Y^*\right) $ is a global minimizer of \eqref{Q:p0} but not a global minimizer of \eqref{Q:p_reg}. Since $\left( \bar X,  \bar Y\right)$ is a global minimizer of \eqref{Q:p_reg} with $\left\|\mathscr{A}\left( {\bar X{\bar Y^T}} \right) - \bm{b}\right\|_2 \leq \Upsilon$, from Lemma \ref{lem:bound} and (1), we have 
$$\Phi\left(\bar X\right)=\left\|\bar X\right\|_{p,0},\,\tilde\Phi\left(\bar Y\right)=\left\|\bar Y\right\|_{p,0},\,\left( \bar X,  \bar Y\right) \in \Omega.$$ 
Using this, we conclude that
\begin{equation*}
\begin{aligned}
\left\|\bar X\right\|_{p,0}+\left\|\bar Y\right\|_{p,0}	&\le	\left\|\bar X\right\|_{p,0}+\left\|\bar Y\right\|_{p,0}+\mu F\left( \bar X,  \bar Y\right)\\
&=\Phi\left(\bar X\right)+\tilde\Phi\left(\bar Y\right)+\mu F\left( \bar X,  \bar Y\right)\\
&<\Phi\left(X^*\right)+\tilde\Phi\left(Y^*\right)+\mu F\left( X^*, Y^*\right)\\
&\le \left\|X^*\right\|_{p,0}+\left\|Y^*\right\|_{p,0}, 
\end{aligned}
\end{equation*}
which leads to a contradiction with the global optimality of $ \left(X^*, Y^*\right) $ for \eqref{Q:p0}. Hence $ \left( X^*, Y^*\right) $ is a global minimizer of problem \eqref{Q:p_reg} and the proof is completed.

\end{proof}

\section{An Inexact Restarted Augmented Lagrangian Method with the Extrapolated Linearized Alternating Minimization}
By introducing the auxiliary variable $ C=XY^T $, problem \eqref{Q:p_phi} can be reformulated into the following problem:
\begin{equation}\label{GSMR} \min\limits_{X,\,Y,\,C } ~\Phi\left(X\right)+\tilde\Phi\left(Y\right)+l_{\Theta}\left(C\right),\quad
	\mbox{\rm s.t.}\quad  C=XY^T,
\end{equation}
where $l_{\Theta}\left(C\right)$ is an indicator function with $ \Theta = \left\lbrace C|\left\|\mathscr{A}\left( C \right) - \bm{b}\right\|_2\le\sigma \right\rbrace  $.

Problem \eqref{GSMR} is to minimize a nonsmooth nonconvex function with bilinear constraints. By exploring the structure of problem \eqref{GSMR}, we propose an  inexact restarted augmented Lagrangian (IRAL) framework in Subsection \ref{sub:IAL}. Next, we propose an extrapolated linearized alternating minimization (ELAM) algorithm to solve the augmented Lagrangian subproblem in Subsection \ref{sub:ELAL}. By putting together these two parts, we come up with the name IRAL-ELAM for our new algorithm. In Subsections \ref{sub:con_IAL} and \ref{sub:con_ELAL}, we prove that every sequence generated by IRAL-ELAM has at least one accumulation point and that each accumulation point satisfies the Karush-Kuhn-Tucker (KKT) conditions of problem \eqref{GSMR}.

\subsection{The proposed IRAL} \label{sub:IAL}
In this subsection, we propose an IRAL framework to solve problem \eqref{GSMR}. It is easy to deduce that the augmented Lagrangian function for problem \eqref{GSMR} is
\begin{equation}\label{GSMR:Lag}
	L_{\eta}\left(X,\,Y,\,C;\,S\right)=\Phi\left(X\right)+\tilde\Phi\left(Y\right)+l_{\Theta}\left(C\right)+\left\langle XY^T-C,S \right\rangle + \frac{\eta}{2}\left\|XY^T-C\right\|_F^2,
\end{equation}
where $ \eta>0 $ is the penalty parameter and $ S $ is the Lagrange multiplier matrix.

Based on the classical augmented Lagrangian method, we use the following subproblem to approximate problem \eqref{GSMR} at each outer iteration:%, where the dual variables are kept constant and the primal variables are updated:
\begin{eqnarray}\label{IAL}
	\min\limits_{X,Y,C } ~L_{\eta^k}\left(X,Y,C;S^k\right).
\end{eqnarray}
%where iterate tuple $\left(X^k,Y^k,C^k,S^k\right)$ denotes $\left(X,Y,C,S\right)$ in the $k$th iteration. 
At the $(k+1)$th iteration, we inexactly solve \eqref{IAL} to obtain an approximate solution $\left(X^{k+1},Y^{k+1},C^{k+1}\right)$ satisfying the following condition:
\begin{equation}\label{eq:I}
	\operatorname{dist}\left(0, \partial L_{\eta^k}\left(X^{k+1},Y^{k+1},C^{k+1};S^k\right) \right) \leq \epsilon_k.
\end{equation}

%The IAL framework for solving problem \eqref{GSMR} is presented as follows.
Now, we present the IRAL framework for solving problem \eqref{GSMR} as follows.
%According to the above derivation, the IRAL framework for solving problem \eqref{GSMR} is presented as follows.
%we develop an IAL framework to solve the problem \eqref{GSMR}, which is described in Algorithm \ref{alg:IAL}.
\begin{algorithm}
	\caption{IRAL algorithm for problem \eqref{GSMR}}
	\label{alg1}
	\begin{algorithmic}[]
		\REQUIRE Initial point $ X^0,Y^0,C^0,S^0$. Parameters $\eta^0>0$, $\rho_1,\rho_2,\rho_3,\in(0,1)$, $\vartheta \in \mathbb{N}_{+}$ and $\epsilon_0>0$.
		Let $k:=0$.
		\WHILE{a stopping criterion is not met}
		\STATE \bm{Step~ 1.} Solve problem \eqref{IAL} to obtain $ \left(X^{k+1},Y^{k+1},C^{k+1}\right)$ satisfying \eqref{eq:I}.
		\STATE \bm{Step~ 2.} \textbf{If} $k\le \vartheta$, set $S^{k+1}=0$, $\eta^{k+1} = \eta^k$ and $\epsilon_{k+1} = \epsilon_k$. \\
		\qquad\quad\;\;\;\textbf{Else if} $k> \vartheta$ and
		\begin{equation}\label{eq:min}
			\left\|X^{k+1}(Y^{k+1})^T-C^{k+1}\right\|_F \le \rho_1 \min_{t=k-\vartheta+1,\ldots,k}\left\|X^t(Y^t)^T-C^t\right\|_F,
		\end{equation}
		then set $S^{k+1}=0$, $\eta^{k+1} = \eta^k$ and $\epsilon_{k+1} = \sqrt{\rho_1}\epsilon_k$. \\
		\qquad\quad\;\;\; Otherwise, compute multipliers $ S^{k+1}$ by
		\begin{equation}\label{opt:multipliers}
			S^{k+1} =S^k+\eta^k\left(X^{k+1}(Y^{k+1})^T-C^{k+1}\right),
		\end{equation}
		and set
		\begin{equation}\label{eq:eta}
			\eta^{k+1} = \eta^k/\rho_2,\;\; \text{and}\;\; \epsilon_{k+1} = \rho_3\epsilon_k. 
		\end{equation}
		\STATE \bm{Step ~3.} Let $ k:=k+1 $ and go to $ \bm{Step~1}$.
		\ENDWHILE
		\ENSURE $ X^{k+1} $, $ Y^{k+1} $, $ C^{k+1} $.
	\end{algorithmic}
\end{algorithm}

\begin{remark}
	Different from the existing inexact augmented Lagrangian methods for nonsmooth nonconvex optimization problems \cite{CGLY17,LLC23,LLM19,LZ12}, we design a new rule for updating the Lagrangian penalty parameter and the Lagrange multiplier matrix.
\end{remark}

\subsection{The proposed ELAM} \label{sub:ELAL}
We now shall discuss how to solve subproblem \eqref{IAL}, which is a nonconvex problem. 
For convenience of notation, we use $\left(X^{(k+1, \jmath)}, Y^{(k+1, \jmath)}, C^{(k+1, \jmath)}\right)$ to denote the $\jmath$-th iterate of the ELAM algorithm and the $k+1$th iterate of Algorithm \ref{alg1}. For the superscript $k$ (and $k+1$), we further denote $\eta^k, S^k, X^{(k+1, \jmath)}, Y^{(k+1, \jmath)}, C^{(k+1, \jmath)}$ by $\eta, S, X^{(\jmath)}, Y^{(\jmath)}, C^{(\jmath)}$. We assume to be at the $\jmath$th iterate of the ELAM algorithm. %\textcolor{blue}{Below, we give the details of updating each variable of the update minimizing subproblem.}

%The notations $\left(X^{(k+1, \jmath)}, Y^{(k+1, \jmath)}, C^{(k+1, \jmath)}\right)$ stands for the $\jmath$-th iterate of the alternating minimization algorithm and the $k+1$th iterate of Algorithm \ref{alg1}. For brevity, we drop the superscript $k$ (and $k+1$) and abuse the notations $\eta, S, X^{(\jmath)}, Y^{(\jmath)}, C^{(\jmath)}$ to denote $\eta^k, S^k, X^{(k+1, \jmath)}, Y^{(k+1, \jmath)}, C^{(k+1, \jmath)}$, respectively. We assume to be at the $\jmath$th iterate of the alternating minimization algorithm.

1) Computing $ X_i^{\jmath+1} $: Fixing other variables except for $ X_i $ in \eqref{GSMR:Lag},  we update $ X_i^{\jmath+1} $ by the following sub-problem:
\begin{equation}\label{solve:X}
	X_i^{\jmath+1} \leftarrow \min _{X_i}~ n_i\phi\left(\left\| X_i \right\|_p\right) +\frac{\eta}{2}\left\|X_i{(Y_i^{\jmath})^T} - G_i^{\jmath}\right\|_F^2,
\end{equation}
where $$ G_i^{\jmath}=C^\jmath-\sum_{l=1}^{i-1}X_l^{\jmath+1}(Y_l^{\jmath+1})^T-\sum_{l=i+1}^{s}X_l^{\jmath}(Y_l^{\jmath})^T-\frac{S}{\eta}.$$

Next, we propose a new acceleration method to solve problem \eqref{solve:X}. First, we give an extrapolated point $ {\bar X}_i^{\jmath}= X_i^{\jmath}+w_{x_i}^{\jmath}\left(X_i^{\jmath}-X_i^{\jmath-1}\right) $. Then we solve $ X_i^{\jmath+1} $ by solving the following problem:
\begin{equation}\label{opt:X}
	\begin{aligned}
		X_i^{\jmath+1} &= \min _{X_i}~ n_i\phi\left(\left\| X_i \right\|_p\right) +\frac{\sigma_{X_i}^{\jmath}}{2}\left\|X_i-\left({\bar X}_i^{\jmath}-\left( {\bar X}_i^{\jmath}{(Y_i^{\jmath})^T} - G_i^{\jmath} \right)Y_i^{\jmath}  / \tau_{X_i}^{\jmath}\right) \right\|_F^2\\
		&= \operatorname{prox}_{n_i/\sigma_{X_i}^{\jmath}}^{\phi(\left\| \cdot \right\|_p)}\left({\bar X}_i^{\jmath}-\left( {\bar X}_i^{\jmath}{(Y_i^{\jmath})^T} - G_i^{\jmath} \right)Y_i^{\jmath}  / \tau_{X_i}^{\jmath}\right),
	\end{aligned}
\end{equation}
where $ \sigma_{X_i}^{\jmath} = \eta\tau_{X_i}^{\jmath} $. To solve \eqref{opt:X}, we need to introduce the following lemma\footnote{For simplicity, we only consider the case $p = 2$ in our algorithm.}.

\begin{lemma}\cite[Lemma 1]{ZZW22}\label{thm:lfp}
	For a positive numbers $ \lambda $, the proximal operator of $ \operatorname{prox}_{\lambda}^{\phi(\left\| \cdot \right\|_F)}(Z) $  has a closed-form solution, i.e.,
	\begin{equation*}
		\begin{aligned}
			X^\star=\operatorname{prox}_{\lambda}^{\phi(\left\| \cdot \right\|_F)}(Z):=\arg \min_{X}\left\{\lambda\phi\left(\left\|X\right\|_F\right)+\frac{1}{2}\|X-Z\|_F^2\right\}
			=\left\{ \begin{array}{l}
				\psi\left(\left\|Z\right\|_F\right)\frac{Z}{\left\|Z\right\|_F}, \quad Z \neq 0,\\
				0, \quad Z = 0,
			\end{array} \right.
		\end{aligned}
	\end{equation*}
	where 
	$$ \psi(z) \in \operatorname{prox}_{\lambda \phi}\left(z\right):=\arg\min_{x \in \mathbb{R}_+} \left\lbrace \lambda\phi\left(x\right)+\frac{1}{2}\left(x-z\right)^2 \right\rbrace.$$
\end{lemma}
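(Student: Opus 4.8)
The goal is to establish the closed-form expression for the proximal operator $\operatorname{prox}_{\lambda}^{\phi(\|\cdot\|_F)}(Z)$, where $\phi$ is a capped folded concave function. The essential observation is that the objective $\lambda\phi(\|X\|_F) + \frac12\|X-Z\|_F^2$ depends on $X$ only through $\|X\|_F$ and through the inner product $\langle X, Z\rangle$, since $\|X-Z\|_F^2 = \|X\|_F^2 - 2\langle X,Z\rangle + \|Z\|_F^2$. The plan is to reduce the matrix minimization to a scalar one.

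First I would argue that, for any fixed value of the radius $t = \|X\|_F \ge 0$, the optimal $X$ must be aligned with $Z$, i.e.\ $X = t\,Z/\|Z\|_F$ when $Z\neq 0$. Indeed, only the term $-2\langle X, Z\rangle$ involves the direction of $X$, and by Cauchy--Schwarz $\langle X, Z\rangle \le \|X\|_F\|Z\|_F = t\|Z\|_F$ with equality precisely when $X$ is a nonnegative multiple of $Z$. Hence minimizing over all $X$ with $\|X\|_F = t$ forces $X = tZ/\|Z\|_F$. Substituting back, the problem becomes the scalar minimization $\min_{t\ge 0}\{\lambda\phi(t) + \frac12(t - \|Z\|_F)^2\}$, whose solution set is exactly $\operatorname{prox}_{\lambda\phi}(\|Z\|_F)$ by definition; picking any $\psi(\|Z\|_F)$ in this set and forming $\psi(\|Z\|_F)\,Z/\|Z\|_F$ yields the stated minimizer. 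When $Z = 0$, the objective is $\lambda\phi(\|X\|_F) + \frac12\|X\|_F^2$, which is minimized at $X = 0$ since $\phi$ is increasing with $\phi(0)=0$ and $\frac12\|X\|_F^2 \ge 0$.

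The only remaining point is to confirm that the scalar proximal subproblem $\min_{x \ge 0}\{\lambda\phi(x) + \frac12(x-z)^2\}$ is well posed, i.e.\ that a minimizer exists. This follows from coercivity (the quadratic term dominates as $x\to\infty$, while $\phi$ is bounded by $1$ on $[\nu,\infty)$) together with lower semicontinuity of $\phi$ on $[0,\infty)$, which is guaranteed by the continuity assumption on $\phi$. Since the result is quoted as \cite[Lemma 1]{ZZW22}, I would simply cite it, but the self-contained argument above is short enough to include. The main (and really only) subtlety is the reduction to the rank-one-aligned form via Cauchy--Schwarz; everything else is routine. There is no serious obstacle here — the lemma is essentially a restatement of the rotational invariance of the Frobenius norm combined with the scalar prox definition.

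\begin{proof}
Since $\|X - Z\|_F^2 = \|X\|_F^2 - 2\langle X, Z\rangle + \|Z\|_F^2$, the objective in the definition of $\operatorname{prox}_{\lambda}^{\phi(\|\cdot\|_F)}(Z)$ depends on $X$ only through $\|X\|_F$ and $\langle X, Z\rangle$. If $Z = 0$, then the objective reduces to $\lambda\phi(\|X\|_F) + \frac12\|X\|_F^2$, which is nonnegative and attains its minimum value $0$ at $X = 0$ because $\phi$ is increasing on $[0,\infty)$ with $\phi(0) = 0$; this gives $X^\star = 0$. If $Z \neq 0$, fix a radius $t := \|X\|_F \ge 0$. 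By the Cauchy--Schwarz inequality, $\langle X, Z\rangle \le \|X\|_F\|Z\|_F = t\|Z\|_F$, with equality if and only if $X = tZ/\|Z\|_F$. Hence, among all $X$ with $\|X\|_F = t$, the objective is minimized at $X = tZ/\|Z\|_F$, and its value there equals $\lambda\phi(t) + \frac12\bigl(t - \|Z\|_F\bigr)^2$. Minimizing over $t \ge 0$ therefore yields
$$
\min_{X}\Bigl\{\lambda\phi(\|X\|_F) + \tfrac12\|X-Z\|_F^2\Bigr\} = \min_{t \ge 0}\Bigl\{\lambda\phi(t) + \tfrac12\bigl(t - \|Z\|_F\bigr)^2\Bigr\},
$$
and any minimizer $X^\star$ has the form $X^\star = \psi(\|Z\|_F)\,Z/\|Z\|_F$ with $\psi(\|Z\|_F) \in \operatorname{prox}_{\lambda\phi}(\|Z\|_F)$. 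The scalar subproblem admits a minimizer because the map $x \mapsto \lambda\phi(x) + \frac12(x-z)^2$ is lower semicontinuous on $[0,\infty)$ (by continuity of $\phi$) and coercive (the quadratic term dominates as $x\to\infty$, since $\phi$ is bounded above by $1$ on $[\nu,\infty)$). This completes the proof.
\end{proof}
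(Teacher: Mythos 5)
Your proof is correct: the paper itself does not prove this lemma but simply quotes it from \cite[Lemma 1]{ZZW22}, and your argument is the standard one — for $Z\neq 0$ fix the radius $t=\|X\|_F$, use Cauchy--Schwarz to show the constrained minimizer must be aligned with $Z$, and thereby reduce the matrix problem to the scalar prox $\min_{t\ge 0}\{\lambda\phi(t)+\tfrac12(t-\|Z\|_F)^2\}$, whose solvability follows from continuity of $\phi$ and coercivity of the quadratic. This is a complete, self-contained justification of the cited result, with the $Z=0$ and $t=0$ cases handled correctly, so no gap remains.
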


2)  Computing $ Y_i^{\jmath+1} $: Fixing other variables except for $ Y_i $ in \eqref{GSMR:Lag},  we update $ Y_i^{\jmath+1} $ by the following sub-problem:
\begin{equation}\label{solve:Y}
	Y_i^{\jmath+1} \leftarrow \min _{Y_i} ~ n_i\phi\left(\left\| Y_i \right\|_p\right) +\frac{\eta}{2}\left\|X_i^{\jmath+1}{Y_i^T} - G_i^{\jmath}\right\|_2^2.
\end{equation}
Likewise, we can obtain $Y_i^{\jmath+1}$ through the linearization and the proximal algorithm:
\begin{equation}\label{opt:Y}
	\begin{aligned}
		Y_i^{\jmath+1} &= \min _{Y_i}~ n_i\phi\left(\left\| Y_i \right\|_p\right) +\frac{\sigma_{Y_i}^{\jmath}}{2}\left\|Y_i-\left({\bar Y}_i^{\jmath}-\left(  X_i^{\jmath+1}{({\bar Y}_i^{\jmath})^T} - G_i^{\jmath} \right)^TX_i^{\jmath+1}  / \tau_{Y_i}^{\jmath}\right) \right\|_F^2\\
		&= \operatorname{prox}_{n_i/\sigma_{Y_i}^{\jmath}}^{\phi(\left\| \cdot \right\|_p)}\left({\bar Y}_i^{\jmath}-\left(  X_i^{\jmath+1}{({\bar Y}_i^{\jmath})^T} - G_i^{\jmath} \right)^TX_i^{\jmath+1} / \tau_{Y_i}^{\jmath}\right),
	\end{aligned}
\end{equation}
where $ \sigma_{Y_i}^{\jmath} = \eta\tau_{Y_i}^{\jmath} $.

3)  Computing $ C^{\jmath+1} $: Fixing other variables except for $ C $ in \eqref{GSMR:Lag},  we update $ C^{\jmath+1} $ by the following sub-problem:
\begin{equation}\label{solve:C}
	C^{\jmath+1} \leftarrow \min _{C}~ \frac{\eta}{2}\left\|C-X^{\jmath+1}(Y^{\jmath+1})^T - \frac{S}{\eta}\right\|_F^2,\quad
	\mbox{\rm s.t.}\quad  \left\|\mathscr{A}\left( C \right) - \bm{b}\right\|_2\le\sigma.
\end{equation}
Thus, the optimal solution of \eqref{solve:C} is
\begin{equation}\label{opt:C}
	C^{\jmath+1} =\Pi_{\Theta}\left(X^{\jmath+1}(Y^{\jmath+1})^T + \frac{S}{\eta}\right),
\end{equation}
where $ \Pi_{\Theta} $ denotes the projection onto set $ \Theta $.

We summary the solving algorithm for subproblem \eqref{IAL} in Algorithm \ref{alg2}.

\begin{algorithm}
	\caption{ELAM algorithm for subproblem \eqref{IAL}}
	\label{alg2}
	\begin{algorithmic}[]
		\REQUIRE Initial point $ X^{0},\,Y^{0}$. 
		Let $ X^{-1}:=X^{0},\,Y^{-1}:=Y^{0}$. Let $\jmath:=0$.
		\FOR{$ i=1,\ldots,s $}
		\STATE \bm{Step~ 1.} Compute $ w_{x_i}^{\jmath} $ and $ {\bar X}_i^{\jmath}= X_i^{\jmath}+w_{x_i}^{\jmath}\left(X_i^{\jmath}-X_i^{\jmath-1}\right) $.
		\STATE \bm{Step ~2.} Compute $ X_i^{\jmath+1} $ by \eqref{opt:X}.
		\STATE \bm{Step~ 3.} Compute $ w_{y_i}^{\jmath} $ and $ {\bar Y}_i^{\jmath} = Y_i^{\jmath}+w_{y_i}^{\jmath}\left(Y_i^{\jmath}-Y_i^{\jmath-1}\right) $.
		\STATE \bm{Step ~4.} Compute $ Y_i^{\jmath+1} $ by \eqref{opt:Y}.
		\ENDFOR
		\STATE \bm{Step ~5.} Remove the zero columns of $ X^{\jmath+1} $ and $ Y^{\jmath+1}$. 
		\STATE \bm{Step ~6.} Compute $ C^{\jmath+1} $ by \eqref{opt:C}.
		\STATE \bm{Step ~7.} Let $ \jmath:=\jmath+1 $. If the stop criterion is not met, return to $ \bm{Step~1}$.
		\ENSURE $ X^{\jmath} $, $ Y^{\jmath} $, $ C^{\jmath} $.
	\end{algorithmic}
\end{algorithm}

\begin{remark}
	In this paper, we set $\tau_{X_i}^0=\tau_{Y_i}^0=1$, and for any $\jmath\ge 1$, let $ \tau_{X_i}^{\jmath}=\max\left\lbrace \gamma\left\|Y_i^\jmath\right\|^2,\varepsilon\right\rbrace  $ and $ \tau_{Y_i}^{\jmath}=\max\left\lbrace \gamma\left\|X_i^{\jmath+1}\right\|^2,\varepsilon\right\rbrace  $ with $ \gamma>1 $.
	In addition, we take	
	\begin{equation*}
		w_{x_i}^{\jmath}=\min \left(\hat{\gamma}_{i}^{\jmath}, \frac{\delta(\gamma-1)}{2(\gamma+1)} \sqrt{\frac{\tau_{X_{i}}^{\jmath-1}}{\tau_{X_i}^{\jmath}}}\right), \;w_{y_i}^{\jmath}=\min \left(\hat{\gamma}_{i}^{\jmath}, \frac{\delta(\gamma-1)}{2(\gamma+1)} \sqrt{\frac{\tau_{Y_{i}}^{\jmath-1}}{\tau_{Y_i}^{\jmath}}}\right),
	\end{equation*}	
	where $\delta<1$ and
	\begin{equation*}
		\begin{aligned}
			\hat{\gamma}_{i}^{\jmath} &=\frac{t_{i-1}^{\jmath}-1}{t_i^{\jmath}}, \quad
			t_{0}^{1} =1, t_{0}^{\jmath}=t_{s}^{\jmath-1}, \text { for } \jmath \geq 2, \\
			t_{i}^{\jmath} &=\frac{1}{2}\left(1+\sqrt{1+4\left(t_{i-1}^{\jmath}\right)^{2}}\right), \text { for } \jmath \geq 1, i=1, \ldots, s .
		\end{aligned}
	\end{equation*}	
\end{remark}

\iffalse
\begin{remark}
	Since $ \Phi\left(X\right) $ and $ \tilde\Phi\left(Y\right) $ regularizer are non-convex, we cannot use the method similar to \cite{Xu15} to prove the convergence of Algorithm \ref{alg2}. Recently, there is growing interest in the use of non-convex regularizers with acceleration method, such as nmAPG \cite{LL15} and FaNCL-acc \cite{YKWL19}. However, these algorithms have high computational complexity because they require more than one proximal step in each iteration. Thus, \cite{YKG17,YX22} propose an efficient proximal gradient algorithm that requires only one inexact proximal step in each iteration. The three methods need to determine whether to adopt the acceleration step in each iteration, so it is possible that only a few steps adopt the acceleration step, resulting in the acceleration effect is not obvious. Here we can not only ensure that each step is accelerated, but also give the convergence of the algorithm.
\end{remark}
\fi

\subsection{Convergence analysis of Algorithm \ref{alg1}} \label{sub:con_IAL}

The following theorem states the main result of our convergence analysis for the proposed Algorithm \ref{alg1}.
\begin{theorem}
	Suppose that the sequence $ \left\lbrace X^k,Y^k,C^k,S^k\right\rbrace_{k \in \mathbb{N}^+}$ generated by Algorithm \ref{alg1} is bounded. Then the following statements hold:
	\begin{itemize}
		\item[$(i)$] $\lim_{k\to \infty}\left\|X^k(Y^k)^T-C^k\right\|_F=0$;
		\item[$(ii)$] any accumulation point $\left(X^{\star},Y^{\star},C^{\star},S^{\star}\right)$ of $ \left\lbrace X^k,Y^k,C^k,S^k\right\rbrace_{k \in \mathbb{N}^+}$ is a stationary point of problem \eqref{GSMR}.
	\end{itemize}
\end{theorem}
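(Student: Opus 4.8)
The plan is to establish the two claims in sequence, using the inexactness condition \eqref{eq:I}, the update rules of Algorithm \ref{alg1}, and the boundedness hypothesis. The argument splits according to the two cases that can occur infinitely often in Step 2: either the multiplier is reset to $S^{k+1}=0$ with $\eta^{k+1}=\eta^k$ (a ``restart'' step), or the multiplier is updated by \eqref{opt:multipliers} with $\eta^{k+1}=\eta^k/\rho_2$ (a ``penalty-increase'' step). I would first dispose of the easy subcase: if the penalty-increase step occurs only finitely often, then eventually $\eta^k\equiv\bar\eta$ is constant and, from some index on, every iteration satisfies the restart test \eqref{eq:min}. Chaining \eqref{eq:min} across a block of $\vartheta$ consecutive iterations shows $\|X^{k}(Y^{k})^T-C^{k}\|_F$ is bounded by $\rho_1^{\lfloor (k-k_0)/\vartheta\rfloor}$ times a fixed constant, hence tends to $0$, giving $(i)$. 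If instead the penalty-increase step occurs infinitely often, then $\eta^k\to\infty$; here I would use that at a penalty-increase iteration the test \eqref{eq:min} failed, together with $\epsilon_{k+1}=\rho_3\epsilon_k\to 0$, and the fact that $S^k$ stays bounded (it is reset to $0$ infinitely often, and between resets the telescoped sum of $\eta^t\|X^t(Y^t)^T-C^t\|_F$ must be controlled) to again force the infeasibility residual to $0$. The cleanest route is probably to show $\sum_k \|X^{k}(Y^{k})^T-C^{k}\|_F<\infty$ or at least that the residual along the penalty-increase subsequence vanishes, then combine with the restart-step bound to cover all indices.

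For $(ii)$, let $(X^\star,Y^\star,C^\star,S^\star)$ be the limit of a subsequence $\{(X^k,Y^k,C^k,S^k)\}_{k\in\mathcal K}$. By $(i)$ we already have $X^\star(Y^\star)^T=C^\star$, so primal feasibility of the KKT system for \eqref{GSMR} holds, and in particular $C^\star\in\Theta$ (so $l_\Theta(C^\star)<\infty$). The inexactness condition \eqref{eq:I} says there exist $V^{k+1}\in\partial L_{\eta^k}(X^{k+1},Y^{k+1},C^{k+1};S^k)$ with $\|V^{k+1}\|\le\epsilon_k\to 0$. Writing out the block structure of this subdifferential — the $X$-block involves $\partial\Phi$ plus $(S^k+\eta^k(X^{k+1}(Y^{k+1})^T-C^{k+1}))(Y^{k+1})$, the $Y$-block similarly, and the $C$-block involves $\partial l_\Theta(C^{k+1})$ minus $S^k+\eta^k(X^{k+1}(Y^{k+1})^T-C^{k+1})$ — I would substitute $S^{k}+\eta^{k}(X^{k+1}(Y^{k+1})^T-C^{k+1})$: on a restart iteration this equals $\eta^k(X^{k+1}(Y^{k+1})^T-C^{k+1})\to 0$ since $\eta^k$ is bounded there and the residual vanishes, while on a penalty-increase iteration it equals $S^{k+1}$, which is bounded and along a further subsequence converges to some $S^\star$. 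Passing to the limit along $\mathcal K$ and invoking outer semicontinuity of the limiting subdifferentials $\partial\Phi$, $\partial\tilde\Phi$, $\partial l_\Theta$ (here I use local Lipschitz continuity / closedness of $\Theta$), I obtain $0\in\partial\Phi(X^\star)+S^\star(Y^\star)$, $0\in\partial\tilde\Phi(Y^\star)+(S^\star)^T X^\star$, and $0\in\partial l_\Theta(C^\star)-S^\star$, i.e. $-S^\star\in N_\Theta(C^\star)$. Together with $X^\star(Y^\star)^T=C^\star$ these are exactly the KKT conditions of \eqref{GSMR}.

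The main obstacle is the boundedness of the multiplier sequence $\{S^k\}$ and, relatedly, making the case analysis in $(i)$ airtight. Unlike classical augmented Lagrangian convergence proofs, here $S^{k+1}$ is sometimes reset to $0$ and sometimes updated, and $\eta^k$ may or may not grow, so one cannot simply quote a standard lemma. I expect the key technical point to be: on any maximal block of consecutive non-restart iterations, $S$ accumulates as $S^{k_0}+\sum_{t}\eta^t(X^{t+1}(Y^{t+1})^T-C^{t+1})$; one must show these blocks are either finite with a uniform residual bound, or that the residuals decay geometrically fast relative to the growth of $\eta^t$ (which itself grows geometrically as $\eta^0/\rho_2^{\,\#}$), so the partial sums stay bounded. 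A careful bookkeeping argument — possibly showing that a non-restart iteration forces the next residual to be comparable to $\epsilon_k$, which decays — should close this, but it is the delicate part and the place where the specific choices of $\rho_1,\rho_2,\rho_3$ and the ``min over the last $\vartheta$ iterates'' test in \eqref{eq:min} really get used.
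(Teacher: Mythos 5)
Your overall architecture coincides with the paper's: for $(i)$ a dichotomy on whether the penalty-increase step \eqref{eq:eta} occurs finitely or infinitely often, and for $(ii)$ passing to the limit in the inexact stationarity condition \eqref{eq:I} after identifying $S^k+\eta^k\bigl(X^{k+1}(Y^{k+1})^T-C^{k+1}\bigr)$ with $S^{k+1}$ on penalty-increase iterations and with a vanishing quantity on restart iterations, then using closedness of the subdifferentials and $\epsilon_k\to 0$. Case 1 of $(i)$ and all of $(ii)$ are essentially the paper's argument.

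Two corrections, however. First, the ``main obstacle'' you single out, boundedness of $\{S^k\}$, is not an obstacle: the theorem \emph{hypothesizes} that the whole sequence $\{X^k,Y^k,C^k,S^k\}$ is bounded, so no bookkeeping over blocks of non-restart iterations is needed; the paper simply invokes this assumption. Second, and more substantively, your mechanism for Case 2 (unbounded $\eta^k$) would not close as sketched: neither the failure of the test \eqref{eq:min} at a penalty-increase iteration nor $\epsilon_k\to 0$ controls the feasibility residual, since \eqref{eq:I} bounds a distance to the subdifferential of $L_{\eta^k}$ and the $C$-block of that subdifferential contains the normal cone $N_{\Theta}(C^{k+1})$, which is a cone, so a small subgradient does not force $\eta^k\|X^{k+1}(Y^{k+1})^T-C^{k+1}\|_F$ to be small; moreover summability of the residuals is neither needed nor evidently true. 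The step that actually works, and is what the paper does, is to invert the multiplier update \eqref{opt:multipliers}: letting $t_k$ be the last penalty-increase iteration with $t_k\le k$, one has $\|X^{t_k+1}(Y^{t_k+1})^T-C^{t_k+1}\|_F=\|S^{t_k+1}-S^{t_k}\|_F/\eta^{t_k}\le\bigl(\|S^{t_k}\|_F+\|S^{t_k+1}\|_F\bigr)/\eta^{t_k}\to 0$ by the assumed boundedness of $S^k$ and $\eta^{t_k}\to\infty$, while on the intervening restart iterations \eqref{eq:min} makes the residual non-increasing, so the bound propagates to iteration $k+1$. With that replacement your outline matches the paper's proof.
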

\begin{proof}
$(i)$ We split the proof into two cases.

Case 1. The sequence $\left\lbrace \eta^k\right\rbrace $ is bounded. In this case, \eqref{eq:eta} happens finite times at most, which means that there exists $k_0>\vartheta$ such that $\eta^k = \eta^{k_0}$ and 
$$\left\|X^{k+1}(Y^{k+1})^T-C^{k+1}\right\|_F \le \rho_1 \min_{t=k-\vartheta+1,\ldots,k}\left\|X^t(Y^t)^T-C^t\right\|_F$$
for all $k>k_0$. We obtain from the above inequality that for all $k>k_0$, 
$$\left\|X^{k+1}(Y^{k+1})^T-C^{k+1}\right\|_F \le \rho_1 \left\|X^k(Y^k)^T-C^k\right\|_F.$$
Together this inequality with $ \rho_1 \in (0,1) $ implies that
$\lim_{k\to \infty}\left\|X^k(Y^k)^T-C^k\right\|_F=0$.
	
Case 2. The sequence $\left\lbrace \eta^k\right\rbrace $ is unbounded. In this case, the set 
\begin{eqnarray}\label{set:K}
	{\mathcal K} = \left\lbrace k: \eta^{k+1} = \eta^k/\rho_2 \right\rbrace 
\end{eqnarray}
is infinite. Thanks to $\rho_2 \in (0,1)$, \eqref{set:K} leads to $\lim_{k\to\infty,k\in{\mathcal K}}\eta^k=\infty$. Given $k>\vartheta$, let $t_k$ be the largest element in $\mathcal K$ satisfying $t_k \le k$. Subsequently, we demonstrate that
\begin{eqnarray}\label{eq:tk}
	\left\|X^{k+1}(Y^{k+1})^T-C^{k+1}\right\|_F \le \frac{\left\|S^{t_k}\right\|_F}{\eta^{t_k}} +   \frac{\left\|S^{t_k+1}\right\|_F}{\eta^{t_k}}.
\end{eqnarray}
It is clear that the inequality \eqref{eq:tk} holds when $t_k=k$. Therefore, we only need to consider $t_k<k$ in the following.  Combining \eqref{eq:min} and \eqref{opt:multipliers}, one has
$$
\left\|X^{k+1}(Y^{k+1})^T-C^{k+1}\right\|_F\le \left\|X^k(Y^k)^T-C^k\right\|_F\le \ldots \le\left\|X^{t_k+1}(Y^{t_k+1})^T-C^{t_k+1}\right\|_F\le \frac{\left\|S^{t_k}\right\|_F}{\eta^{t_k}} +   \frac{\left\|S^{t_k+1}\right\|_F}{\eta^{t_k}}.
$$
Together with the boundedness of $S^k$, $t_k \in {\mathcal K}$ and $\lim_{k\to\infty,k\in{\mathcal K}}\eta^k=\infty$, we know that
$$ \lim_{k\to\infty}\left\|X^{k+1}(Y^{k+1})^T-C^{k+1}\right\|_F\le\lim_{k\to\infty}\frac{\left\|S^{t_k}\right\|_F}{\eta^{t_k}}+\lim_{k\to\infty}\frac{\left\|S^{t_k+1}\right\|_F}{\eta^{t_k}}=0. $$
The above inequality yield that $\lim_{k\to \infty}\left\|X^k(Y^k)^T-C^k\right\|_F=0$. Thus, we complete the proof of statement $(i)$.

$(ii)$ From Bolzano-Weierstrass Theorem \cite{Bro12}, $ \left\lbrace X^k,Y^k,C^k,S^k\right\rbrace_{k \in \N^+}$ has at least one accumulation point $\left(X^{\star},Y^{\star},C^{\star}, S^{\star}\right)$ and there exists a subsequences that converges to this accumulation point. Without loss of generality, we assume that the sequence is $ \left\lbrace X^k,Y^k,C^k,S^k\right\rbrace$. 
Recalling result $(i)$ of this theorem, we obtain that this accumulation point is feasible.

Together with the inequality \eqref{eq:I} and the definition of $L_{\eta}\left(X,Y,C;S\right)$, there exists $\zeta^{k+1} \in \partial_Z L_{\eta}\left(X,Y,C;S\right)$ satisfying $\left\|\zeta^{k+1}\right\|_F\le \epsilon_k $ such that
\begin{eqnarray}\label{zeta}
\begin{aligned}
	\zeta^{k+1} \in& \partial_{Z}\left[ \Phi\left(X^{k+1}\right)+\tilde\Phi\left(Y^{k+1}\right)+l_{\Theta}\left(C^{k+1}\right)\right]+\nabla_{Z}\left[\left\langle h\left(Z^{k+1}\right),S^k \right\rangle + \frac{\eta^k}{2}\left\|h\left(Z^{k+1}\right)\right\|_F^2\right]\\
	=&\partial_{Z}\left[ \Phi\left(X^{k+1}\right)+\tilde\Phi\left(Y^{k+1}\right)+l_{\Theta}\left(C^{k+1}\right)\right]+\left\langle \nabla_{Z}h\left(Z^{k+1}\right),S^k+\eta^kh\left(Z^{k+1}\right) \right\rangle,
\end{aligned}
\end{eqnarray}
where $Z = (X,Y,C)$ and $h\left(Z\right) = XY^T-C $.

In view of \eqref{opt:multipliers} hold when $k\in {\mathcal K}$, we then obtain that
\begin{equation}\label{eq:Sk-1}
	\lim_{k\in {\mathcal K}, k\to \infty}S^k+\eta^kh\left(Z^{k+1}\right)=\lim_{k\in {\mathcal K},k\to \infty} S^{k+1}.
\end{equation}
If $k\notin {\mathcal K}$, by $S^{k+1} = 0,~ \eta^{k+1}=\eta^k$ and result $(i)$ of this Theorem, we get
\begin{equation}\label{eq:Sk-2}
	\lim_{k\notin {\mathcal K},k\to \infty}S^k+\eta^kh\left(Z^{k+1}\right)=\lim_{k\notin {\mathcal K},k\to \infty} S^{k+1}.
\end{equation}
Combining \eqref{eq:Sk-1} with \eqref{eq:Sk-2}, we obtain that
\begin{eqnarray}\label{eq:Sk}
	\lim_{k\to \infty}S^k+\eta^kh\left(Z^{k+1}\right)=\lim_{k\to \infty} S^{k+1}.
\end{eqnarray}

By the update rule of Algorithm \ref{alg1}, we have $\epsilon_k \leq \max \left\{\sqrt{\rho_1}, \rho_3\right\} \epsilon_{k-1}$ for all $k>\vartheta$. Combining this with the fact that $\rho_1,\rho_3 \in (0,1)$, we obtain $\lim _{k \to \infty} \epsilon_k=0$. Consequently, we can infer that $\lim _{k \to \infty} \zeta^{k+1}=0$, which together with \eqref{zeta} and \eqref{eq:Sk} implies that
\begin{eqnarray*}
	0 \in \partial_{Z}\left[ \Phi\left(X^{\star}\right)+\tilde\Phi\left(Y^{\star}\right)+l_{\Theta}\left(C^{\star}\right)\right]+\nabla_{Z}\left[\left\langle X^{\star}(Y^{\star})^T-C^{\star},S^{\star} \right\rangle\right].
\end{eqnarray*}
Hence, $\left(X^{\star},Y^{\star},C^{\star},S^{\star}\right)$ is a stationary point of problem \eqref{GSMR}, which completes the statement $(ii)$.

\end{proof}

\subsection{Convergence analysis of Algorithm \ref{alg2}} \label{sub:con_ELAL}
In this subsection, we prove the convergence of Algorithm \ref{alg2} for solving subproblem \eqref{IAL}. The main results are given in Theorem \ref{thm:local} below. We first give some lemmas.

To simply the notation, we denote $ f_{X_i}:=\frac{1}{2}\left\|X_i{(Y_i^{\jmath})^T} - G_i^{\jmath}\right\|_F^2 $ and $ f_{Y_i}:=\frac{1}{2}\left\|X_i^{\jmath+1}{Y_i^T} - G_i^{\jmath}\right\|_F^2 $ in this subsection.

\begin{lemma}\label{lem:mono}
	Let $ \left\lbrace X^\jmath,Y^\jmath,C^\jmath \right\rbrace  $ be the sequence generated by Algorithm \ref{alg2}, then
	\begin{equation*}
		\begin{aligned}
			&L_{\eta}\left(X^{\jmath+1},\,Y^{\jmath+1},\,C^{\jmath+1};S\right)- L_{\eta}\left(X^{\jmath},Y^{\jmath},\,C^{\jmath};S\right) \\
			\le& -\frac{\eta}{2}\left\|C^{\jmath+1}-C^{\jmath}\right\|_F^2+\frac{\gamma-1}{4\gamma}\sum_{i=1}^s\eta\tau_{X_i}^{\jmath-1}\delta^2\left\|X_i^{\jmath}-X_i^{\jmath-1}\right\|_F^2-\frac{\gamma-1}{4\gamma}\sum_{i=1}^s\eta\tau_{X_i}^{\jmath} \left\|X_i^{\jmath+1}-X_i^{\jmath}\right\|_F^2 \\ &+\frac{\gamma-1}{4\gamma}\sum_{i=1}^s\eta\tau_{Y_i}^{\jmath-1}\delta^2\left\|Y_i^{\jmath}-Y_i^{\jmath-1}\right\|_F^2-\frac{\gamma-1}{4\gamma}\sum_{i=1}^s\eta\tau_{Y_i}^{\jmath} \left\|Y_i^{\jmath+1}-Y_i^{\jmath}\right\|_F^2.
		\end{aligned}
	\end{equation*}
\end{lemma}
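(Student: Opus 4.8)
The plan is to establish the claimed one-iteration decrease of the augmented Lagrangian by treating each of the block updates in turn and summing the resulting inequalities. The key observation is that the only genuinely nonconvex pieces of $L_\eta(X,Y,C;S)$ as a function of a single block $X_i$ (resp.\ $Y_i$) are the $\phi(\|\cdot\|_p)$ term, which is handled by the proximal step \eqref{opt:X} (resp.\ \eqref{opt:Y}), and the quadratic $f_{X_i}$ (resp.\ $f_{Y_i}$), which is a convex quadratic in the single block with gradient $\bigl({\bar X}_i^{\jmath}(Y_i^{\jmath})^T-G_i^{\jmath}\bigr)Y_i^{\jmath}$ at the extrapolated point and Lipschitz constant $\|Y_i^{\jmath}\|^2$ (resp.\ $\|X_i^{\jmath+1}\|^2$). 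Since $\tau_{X_i}^{\jmath}\ge\gamma\|Y_i^{\jmath}\|^2$ with $\gamma>1$, the surrogate $\tfrac{\tau_{X_i}^{\jmath}}{2}\|X_i-({\bar X}_i^{\jmath}-\nabla f_{X_i}({\bar X}_i^{\jmath})/\tau_{X_i}^{\jmath})\|_F^2$ majorizes $f_{X_i}$ up to a controllable error; this is exactly the standard prox-linear / extrapolation bookkeeping as in Xu--Yin type analyses.

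The steps I would carry out, in order, are: (1) Fix $i$ and write the optimality of \eqref{opt:X}: since $X_i^{\jmath+1}$ minimizes $n_i\phi(\|X_i\|_p)+\tfrac{\sigma_{X_i}^{\jmath}}{2}\|X_i-({\bar X}_i^{\jmath}-\nabla f_{X_i}({\bar X}_i^{\jmath})/\tau_{X_i}^{\jmath})\|_F^2$ with $\sigma_{X_i}^{\jmath}=\eta\tau_{X_i}^{\jmath}$, compare its value at $X_i^{\jmath+1}$ against its value at $X_i^{\jmath}$, and expand the squares. (2) Use the descent-lemma-type inequality for the convex quadratic $\eta f_{X_i}$: $\eta f_{X_i}(X_i^{\jmath+1})\le \eta f_{X_i}({\bar X}_i^{\jmath})+\eta\langle\nabla f_{X_i}({\bar X}_i^{\jmath}),X_i^{\jmath+1}-{\bar X}_i^{\jmath}\rangle+\tfrac{\eta\|Y_i^{\jmath}\|^2}{2}\|X_i^{\jmath+1}-{\bar X}_i^{\jmath}\|_F^2$, together with exactness of the quadratic along the relevant directions to also bound $\eta f_{X_i}({\bar X}_i^{\jmath})$ in terms of $\eta f_{X_i}(X_i^{\jmath})$ and the extrapolation term $\|X_i^{\jmath}-X_i^{\jmath-1}\|_F^2$. (3) Combine (1) and (2): the cross terms telescope into a difference of squares, and choosing $w_{x_i}^{\jmath}\le\tfrac{\delta(\gamma-1)}{2(\gamma+1)}\sqrt{\tau_{X_i}^{\jmath-1}/\tau_{X_i}^{\jmath}}$ (the prescribed extrapolation weight) is precisely what is needed to absorb the $\|X_i^{\jmath}-X_i^{\jmath-1}\|_F^2$ contribution into the $\tfrac{\gamma-1}{4\gamma}\eta\tau_{X_i}^{\jmath-1}\delta^2$ term while retaining a negative $\tfrac{\gamma-1}{4\gamma}\eta\tau_{X_i}^{\jmath}\|X_i^{\jmath+1}-X_i^{\jmath}\|_F^2$ term; here I would use $\|a\|^2\le(1+t)\|b\|^2+(1+1/t)\|a-b\|^2$ type splitting with $a=X_i^{\jmath+1}-X_i^{\jmath}$, $b={\bar X}_i^{\jmath}-X_i^{\jmath}=w_{x_i}^{\jmath}(X_i^{\jmath}-X_i^{\jmath-1})$. (4) Repeat (1)--(3) verbatim for $Y_i$, noting that after $X_i^{\jmath+1}$ is fixed the quadratic $f_{Y_i}$ has Lipschitz constant $\|X_i^{\jmath+1}\|^2$ and $\tau_{Y_i}^{\jmath}\ge\gamma\|X_i^{\jmath+1}\|^2$. (5) Handle the $C$-update: $C^{\jmath+1}$ minimizes $\tfrac{\eta}{2}\|C-X^{\jmath+1}(Y^{\jmath+1})^T-S/\eta\|_F^2$ over the convex set $\Theta$, and since $L_\eta$ is, as a function of $C$ alone, this quadratic plus the indicator $l_\Theta$, strong convexity with modulus $\eta$ gives $L_\eta(\cdot,\cdot,C^{\jmath+1};S)-L_\eta(\cdot,\cdot,C^{\jmath};S)\le-\tfrac{\eta}{2}\|C^{\jmath+1}-C^{\jmath}\|_F^2$. (6) Sum the telescoped block inequalities over $i=1,\dots,s$ and add the $C$-inequality; after careful tracking the surviving terms are exactly those on the right-hand side of the Lemma.

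The main obstacle I anticipate is step (3)/(4): getting the constants to line up so that the extrapolation term is controlled by $\tfrac{\delta(\gamma-1)}{2(\gamma+1)}\sqrt{\tau_{X_i}^{\jmath-1}/\tau_{X_i}^{\jmath}}$ and the leftover negative curvature term has exactly coefficient $\tfrac{\gamma-1}{4\gamma}\eta\tau_{X_i}^{\jmath}$. This requires choosing the auxiliary splitting parameter $t$ in the Young-type inequality as a function of $\gamma$ (the natural choice being tied to $(\gamma-1)/(\gamma+1)$), and being careful that the "error" $\tfrac{\eta}{2}(\tau_{X_i}^{\jmath}-\|Y_i^{\jmath}\|^2)\|X_i^{\jmath+1}-{\bar X}_i^{\jmath}\|_F^2\ge\tfrac{\eta}{2}\tfrac{\gamma-1}{\gamma}\tau_{X_i}^{\jmath}\|X_i^{\jmath+1}-{\bar X}_i^{\jmath}\|_F^2$ from the prox-linear gap is large enough to pay for both the $\|X_i^{\jmath+1}-X_i^{\jmath}\|_F^2$ and $\|X_i^{\jmath}-X_i^{\jmath-1}\|_F^2$ contributions. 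I expect the $\phi(\|\cdot\|_p)$ terms themselves to cause no trouble — they simply cancel between consecutive iterates via the minimality in \eqref{opt:X}/\eqref{opt:Y} and never need to be differentiated — and the $C$-step to be immediate. The bulk of the work is the deterministic constant-chasing in the two block estimates, which I would not grind through here beyond fixing the shape of the argument above.
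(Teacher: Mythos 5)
Your proposal follows essentially the same route as the paper's proof: per-block extrapolated prox-linear estimates (prox optimality of \eqref{opt:X}/\eqref{opt:Y} combined with a descent/convexity bound for the block quadratic with Lipschitz constant controlled by $\tau_{X_i}^{\jmath}/\gamma$ resp.\ $\tau_{Y_i}^{\jmath}/\gamma$), a Young-type inequality together with the prescribed weight bound $w_{x_i}^{\jmath}\le\frac{\delta(\gamma-1)}{2(\gamma+1)}\sqrt{\tau_{X_i}^{\jmath-1}/\tau_{X_i}^{\jmath}}$ to convert the extrapolation term into the $\frac{\gamma-1}{4\gamma}\eta\tau^{\jmath-1}\delta^2$ term while keeping the negative $\frac{\gamma-1}{4\gamma}\eta\tau^{\jmath}$ term, strong convexity of the $C$-step, and summation over blocks. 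The only detail your sketch leaves implicit, and which the paper makes explicit, is the Gauss--Seidel chaining $f_{X_i^{\jmath+1}}=f_{Y_i^{\jmath}}$ and $f_{Y_i^{\jmath+1}}=f_{X_{i+1}^{\jmath}}$ (a direct consequence of the definition of $G_i^{\jmath}$) that makes the per-block quadratic differences telescope into the full augmented-Lagrangian difference.
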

\begin{proof}
	From the Lipschitz continuity of $  \nabla_{X_i}f_{X_i} $ about $ X_i $, it holds that
	\begin{equation}\label{mono-1}
		f_{X_i^{\jmath+1}} \le f_{X_i^{\jmath}}+\left\langle \nabla_{X_i}f_{X_i^{\jmath}}, X_i^{\jmath+1}-X_i^{\jmath} \right\rangle+\frac{\tau_{X_i}^{\jmath}}{2\gamma}\left\|X_i^{\jmath+1}-X_i^{\jmath}\right\|_F^2.
	\end{equation}
	Since $ X_i^{\jmath+1} $ is the minimizer of \eqref{opt:X}, then
	\begin{equation}\label{mono-2}
		\begin{aligned}
			&n_i\phi\left(\left\| X_i^{\jmath+1} \right\|_p\right)+\frac{\sigma_{X_i}^{\jmath}}{2}\left\|X_i^{\jmath+1}-\bar X_i^{\jmath}\right\|_F^2+\eta\left\langle X_i^{\jmath+1}-\bar X_i^{\jmath}, \nabla_{X_i}f_{\bar X_i^{\jmath}}
			\right\rangle\\
			\le & n_i\phi\left(\left\| X_i^{\jmath} \right\|_p\right)+\frac{\sigma_{X_i}^{\jmath}}{2}\left\|X_i^{\jmath}-\bar X_i^{\jmath}\right\|_F^2+\eta\left\langle X_i^{\jmath}-\bar X_i^{\jmath}, \nabla_{X_i}f_{\bar X_i^{\jmath}} \right\rangle.
		\end{aligned}
	\end{equation}
	Summing \eqref{mono-1} and \eqref{mono-2}, we have
	\begin{equation}\label{mono-X}
		\begin{aligned}
			&n_i\phi\left(\left\| X_i^{\jmath+1} \right\|_p\right)+\eta f_{X_i^{\jmath+1}} - n_i\phi\left(\left\| X_i^{\jmath} \right\|_p\right)-\eta f_{X_i^{\jmath}} \\
			\le & \eta\left\langle \nabla_{X_i}f_{X_i^{\jmath}}-\nabla_{X_i}f_{\bar X_i^{\jmath}}, X_i^{\jmath+1}-X_i^{\jmath} \right\rangle+\frac{\sigma_{X_i}^{\jmath}}{2\gamma}\left\|X_i^{\jmath+1}-X_i^{\jmath}\right\|_F^2 +\frac{\sigma_{X_i}^{\jmath}}{2}\left\|X_i^{\jmath}-\bar X_i^{\jmath}\right\|_F^2-\frac{\sigma_{X_i}^{\jmath}}{2}\left\|X_i^{\jmath+1}-\bar X_i^{\jmath}\right\|_F^2\\
			=& \eta\left\langle \nabla_{X_i}f_{X_i^{\jmath}}-\nabla_{X_i}f_{\bar X_i^{\jmath}}, X_i^{\jmath+1}-X_i^{\jmath} \right\rangle+\sigma_{X_i}^{\jmath}\left\langle X_i^{\jmath}-X_i^{\jmath+1}, X_i^{\jmath}-\bar X_i^{\jmath} \right\rangle -\frac{\sigma_{X_i}^{\jmath}\left(\gamma-1\right)}{2\gamma} \left\|X_i^{\jmath+1}-X_i^{\jmath}\right\|_F^2\\
			\le & \eta\left\|X_i^{\jmath+1}-X_i^{\jmath}\right\|_F\left(\left\|\nabla_{X_i}f_{X_i^{\jmath}}-\nabla_{X_i}f_{\bar X_i^{\jmath}}\right\|_F+\tau_{X_i}^{\jmath}\left\|X_i^{\jmath}-\bar X_i^{\jmath} \right\|_F\right)-\frac{\sigma_{X_i}^{\jmath}\left(\gamma-1\right)}{2\gamma} \left\|X_i^{\jmath+1}-X_i^{\jmath}\right\|_F^2 \\
			\le & \frac{\sigma_{X_i}^{\jmath}\left(1+\gamma\right) }{\gamma}\left\|X_i^{\jmath+1}-X_i^{\jmath}\right\|_F\left\|X_i^{\jmath}-\bar X_i^{\jmath} \right\|_F-\frac{\sigma_{X_i}^{\jmath}\left(\gamma-1\right)}{2\gamma} \left\|X_i^{\jmath+1}-X_i^{\jmath}\right\|_F^2\\
			=&\frac{\sigma_{X_i}^{\jmath}w_{x_i}^{\jmath}\left(1+\gamma\right) }{\gamma}\left\|X_i^{\jmath+1}-X_i^{\jmath}\right\|_F\left\|X_i^{\jmath}- X_i^{\jmath-1} \right\|_F-\frac{\sigma_{X_i}^{\jmath}\left(\gamma-1\right)}{2\gamma} \left\|X_i^{\jmath+1}-X_i^{\jmath}\right\|_F^2\\
			\le&\frac{\eta\tau_{X_i}^{\jmath}\left(1+\gamma\right)^2 }{\gamma\left(\gamma-1\right) }\left(w_{x_i}^{\jmath}\right)^2\left\|X_i^{\jmath}-X_i^{\jmath-1}\right\|_F^2-\frac{\eta\tau_{X_i}^{\jmath}\left(\gamma-1\right)}{4\gamma} \left\|X_i^{\jmath+1}-X_i^{\jmath}\right\|_F^2 \\
			\le & \frac{\eta\tau_{X_i}^{\jmath-1}\left(\gamma-1\right)}{4\gamma}\delta^2\left\|X_i^{\jmath}-X_i^{\jmath-1}\right\|_F^2-\frac{\eta\tau_{X_i}^{\jmath}\left(\gamma-1\right)}{4\gamma} \left\|X_i^{\jmath+1}-X_i^{\jmath}\right\|_F^2.
		\end{aligned}
	\end{equation}
	Here, we have used Cauchy–Schwarz inequality in the second inequality, Lipschitz continuity of $  \nabla_{X_i}f_{X_i} $ about $ X_i $ in the third one, the Young’s inequality in the fourth one, and $ w_{x_i}^{\jmath}\le \frac{\delta(\gamma-1)}{2(\gamma+1)} \sqrt{\frac{\tau_{X_{i}}^{\jmath-1}}{\tau_{X_i}^{\jmath}}} $ to get the last inequality, the fact $ \bar X_i^{\jmath}= X_i^{\jmath}+w_{x_i}^{\jmath}\left(X_i^{\jmath}-X_i^{\jmath-1}\right) $ to have the second equality.
	Similarly, for $ Y_i^{\jmath+1} $, we have
	\begin{equation}\label{mono-Y}
		\begin{aligned}
			&n_i\phi\left(\left\| Y_i^{\jmath+1} \right\|_p\right)+\eta f_{Y_i^{\jmath+1}} - n_i\phi\left(\left\| Y_i^{\jmath} \right\|_p\right)-\eta f_{Y_i^{\jmath}} \\
			\le & \frac{\eta\tau_{Y_i}^{\jmath-1}\left(\gamma-1\right)}{4\gamma}\delta^2\left\|Y_i^{\jmath}-Y_i^{\jmath-1}\right\|_F^2-\frac{\eta\tau_{Y_i}^{\jmath}\left(\gamma-1\right)}{4\gamma} \left\|Y_i^{\jmath+1}-Y_i^{\jmath}\right\|_F^2.
		\end{aligned}
	\end{equation}
	Summing up \eqref{mono-X} and \eqref{mono-Y} over $ i $ from 1 to $ s $ gives
	\begin{equation}\label{mono-XY}
		\begin{aligned}
			&L_{\eta}\left(X^{\jmath+1},Y^{\jmath+1},C^{\jmath};S\right)-L_{\eta}\left(X^{\jmath},Y^{\jmath},C^{\jmath};S\right)\\
			=& \Phi\left(X^{\jmath+1}\right)+\tilde\Phi\left(Y^{\jmath+1}\right)+\frac{\eta}{2}\left\|X^{\jmath+1}\left(Y^{\jmath+1}\right)^T-C^{\jmath}+\frac{S}{\eta}\right\|_F^2
			-\Phi\left(X^{\jmath}\right)-\tilde\Phi\left(Y^{\jmath}\right)-\frac{\eta}{2}\left\|X^{\jmath}\left(Y^{\jmath}\right)^T-C^{\jmath}+\frac{S}{\eta}\right\|_F^2 \\
			=& \sum_{i=1}^s n_i\left[ \phi\left(\left\| X_i^{\jmath+1} \right\|_p\right) - \phi\left(\left\| X_i^{\jmath} \right\|_p\right) +\phi\left(\left\| Y_i^{\jmath+1} \right\|_p\right)- \phi\left(\left\| Y_i^{\jmath} \right\|_p\right)\right]+\eta\left(f_{Y_s^{\jmath+1}}-f_{X_1^{\jmath}}\right)     \\
			=& \sum_{i=1}^s n_i\left[ \phi\left(\left\| X_i^{\jmath+1} \right\|_p\right) - \phi\left(\left\| X_i^{\jmath} \right\|_p\right) +\phi\left(\left\| Y_i^{\jmath+1} \right\|_p\right)- \phi\left(\left\| Y_i^{\jmath} \right\|_p\right)\right]+\eta\left(f_{Y_s^{\jmath+1}}-f_{X_1^{\jmath}}\right)\\
			&+\eta\sum_{i=1}^{s-1}\left(f_{Y_i^{\jmath+1}}-f_{X_{i+1}^{\jmath}}\right)+\eta\sum_{i=1}^s\left(f_{X_i^{\jmath+1}}-f_{Y_i^{\jmath}}\right)     \\
			=& \sum_{i=1}^s \left[ n_i\phi\left(\left\| X_i^{\jmath+1} \right\|_p\right)+\eta f_{X_i^{\jmath+1}} - n_i\phi\left(\left\| X_i^{\jmath} \right\|_p\right)-\eta f_{X_i^{\jmath}}\right]\\
			&+ \sum_{i=1}^s \left[ n_i\phi\left(\left\| Y_i^{\jmath+1} \right\|_p\right)+\eta f_{Y_i^{\jmath+1}} - n_i\phi\left(\left\| Y_i^{\jmath} \right\|_p\right)-\eta f_{Y_i^{\jmath}}\right]                                       \\
			\le &\frac{\gamma-1}{4\gamma}\sum_{i=1}^s\eta\tau_{X_i}^{\jmath-1}\delta^2\left\|X_i^{\jmath}-X_i^{\jmath-1}\right\|_F^2-\frac{\gamma-1}{4\gamma}\sum_{i=1}^s\eta\tau_{X_i}^{\jmath} \left\|X_i^{\jmath+1}-X_i^{\jmath}\right\|_F^2 \\ &+\frac{\gamma-1}{4\gamma}\sum_{i=1}^s\eta\tau_{Y_i}^{\jmath-1}\delta^2\left\|Y_i^{\jmath}-Y_i^{\jmath-1}\right\|_F^2-\frac{\gamma-1}{4\gamma}\sum_{i=1}^s\eta\tau_{Y_i}^{\jmath} \left\|Y_i^{\jmath+1}-Y_i^{\jmath}\right\|_F^2,
		\end{aligned}
	\end{equation}
	where the third equality comes from $ f_{Y_i^{\jmath+1}}=f_{X_{i+1}^{\jmath}} $ for $ i\in[s-1] $ and $ f_{X_i^{\jmath+1}}=f_{Y_i^{\jmath}} $ for $ i\in[s] $.
	
	Since $ C^{\jmath+1} $ is optimal to \eqref{solve:C}, we have
	\begin{equation}\label{mono-C}
		L_{\eta}\left(X^{\jmath+1},Y^{\jmath+1},C^{\jmath+1};S\right)-L_{\eta}\left(X^{\jmath+1},Y^{\jmath+1},C^{\jmath};S\right) \le -\frac{\eta}{2}\left\|C^{\jmath+1}-C^{\jmath}\right\|_F^2.
	\end{equation}
	Summing up \eqref{mono-XY} and \eqref{mono-C} completes the proof.
\end{proof}
\iffalse
\begin{lemma}
	Suppose that $A_n$ for $n\in[N]$ are $N$ arbitrary matrices. Then
	$$\left\|A_1+\ldots+A_N\right\|_F^2 \le N\left(\left\|A_1\right\|_F^2+\ldots+\left\|A_N\right\|_F^2\right).$$
\end{lemma}
\begin{proof}
	Let $f(X)=\left\|X\right\|_F^2$, which is a convex function. By Jensen's inequality, one has
	$$
	f\left(\frac{1}{N}\sum_{n=1}^N A_n\right) \le \frac{1}{N}\sum_{n=1}^N f\left(A_n\right).
	$$
	This implies that $\left\|A_1+\ldots+A_N\right\|_F^2 \le N\left(\left\|A_1\right\|_F^2+\ldots+\left\|A_N\right\|_F^2\right)$.
\end{proof}
\fi

\begin{lemma}\label{lem:lim}
Let $ \left\lbrace X^\jmath,Y^\jmath,C^\jmath \right\rbrace  $ be the sequence generated by Algorithm \ref{alg2}, then
\begin{equation}\label{lim-1}
	\lim _{\jmath \to \infty}\left(X^{\jmath+1}-X^{\jmath}\right)=0,\quad\lim _{\jmath \to \infty}\left(Y^{\jmath+1}-Y^{\jmath}\right)=0,\quad \lim _{\jmath \to \infty}\left(C^{\jmath+1}-C^{\jmath}\right)=0.
\end{equation}
\end{lemma}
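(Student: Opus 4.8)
The plan is to convert the one-step estimate of Lemma~\ref{lem:mono} into the statement that a suitable potential function is nonincreasing and bounded below, and then read off the claim. The obstruction is that the right-hand side of Lemma~\ref{lem:mono} still carries the ``lagged'' positive terms $\frac{\gamma-1}{4\gamma}\eta\,\delta^{2}\tau_{X_i}^{\jmath-1}\|X_i^{\jmath}-X_i^{\jmath-1}\|_F^2$ (and their $Y$-analogues), so $L_\eta$ itself need not decrease. I would absorb these into the potential by setting, for $\jmath\ge 1$,
\[
\Xi^{\jmath}\;:=\;L_{\eta}\!\left(X^{\jmath},Y^{\jmath},C^{\jmath};S\right)\;+\;\frac{\gamma-1}{4\gamma}\sum_{i=1}^{s}\eta\,\delta^{2}\Bigl(\tau_{X_i}^{\jmath-1}\|X_i^{\jmath}-X_i^{\jmath-1}\|_F^2+\tau_{Y_i}^{\jmath-1}\|Y_i^{\jmath}-Y_i^{\jmath-1}\|_F^2\Bigr).
\]
Forming $\Xi^{\jmath+1}-\Xi^{\jmath}$ and inserting Lemma~\ref{lem:mono}, the lagged terms at level $\jmath$ cancel and the newly added terms at level $\jmath+1$ combine with the negative terms of Lemma~\ref{lem:mono} to leave
\[
\Xi^{\jmath+1}-\Xi^{\jmath}\;\le\;-\frac{\eta}{2}\|C^{\jmath+1}-C^{\jmath}\|_F^2-\frac{(\gamma-1)(1-\delta^{2})}{4\gamma}\sum_{i=1}^{s}\eta\Bigl(\tau_{X_i}^{\jmath}\|X_i^{\jmath+1}-X_i^{\jmath}\|_F^2+\tau_{Y_i}^{\jmath}\|Y_i^{\jmath+1}-Y_i^{\jmath}\|_F^2\Bigr),
\]
which is $\le 0$ since $\gamma>1$, $0<\delta<1$ and $\eta>0$; thus $\{\Xi^{\jmath}\}$ is nonincreasing. (The first step $\jmath=0\to 1$ causes no trouble because $X^{-1}=X^{0}$, $Y^{-1}=Y^{0}$ make the lagged terms vanish.)

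Next I would verify $\{\Xi^{\jmath}\}$ is bounded below. Because $\phi\ge 0$ we have $\Phi(X^{\jmath}),\tilde\Phi(Y^{\jmath})\ge 0$; because every iterate $C^{\jmath}$ is the projection onto $\Theta$ in \eqref{opt:C}, the indicator $l_{\Theta}(C^{\jmath})$ is $0$; and completing the square,
\[
\bigl\langle X^{\jmath}(Y^{\jmath})^{T}-C^{\jmath},S\bigr\rangle+\frac{\eta}{2}\bigl\|X^{\jmath}(Y^{\jmath})^{T}-C^{\jmath}\bigr\|_F^{2}=\frac{\eta}{2}\Bigl\|X^{\jmath}(Y^{\jmath})^{T}-C^{\jmath}+\tfrac{S}{\eta}\Bigr\|_F^{2}-\frac{1}{2\eta}\|S\|_F^{2}\;\ge\;-\frac{1}{2\eta}\|S\|_F^{2}.
\]
Since the extra quadratic terms defining $\Xi^{\jmath}$ are nonnegative, $\Xi^{\jmath}\ge -\tfrac{1}{2\eta}\|S\|_F^{2}$ for all $\jmath$. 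A nonincreasing sequence that is bounded below converges, so $\Xi^{\jmath}-\Xi^{\jmath+1}\to 0$; in fact, telescoping the displayed dissipation inequality yields
\[
\sum_{\jmath\ge 1}\Bigl(\|C^{\jmath+1}-C^{\jmath}\|_F^2+\sum_{i=1}^{s}\tau_{X_i}^{\jmath}\|X_i^{\jmath+1}-X_i^{\jmath}\|_F^2+\sum_{i=1}^{s}\tau_{Y_i}^{\jmath}\|Y_i^{\jmath+1}-Y_i^{\jmath}\|_F^2\Bigr)<\infty,
\]
so each individual summand tends to $0$.

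Finally, I would invoke the uniform positive lower bound $\tau_{X_i}^{\jmath}=\max\{\gamma\|Y_i^{\jmath}\|^{2},\varepsilon\}\ge\varepsilon>0$ (and likewise $\tau_{Y_i}^{\jmath}\ge\varepsilon$): from $\tau_{X_i}^{\jmath}\|X_i^{\jmath+1}-X_i^{\jmath}\|_F^2\to 0$ I get $\|X_i^{\jmath+1}-X_i^{\jmath}\|_F\to 0$ for every $i\in[s]$, hence $X^{\jmath+1}-X^{\jmath}\to 0$; symmetrically $Y^{\jmath+1}-Y^{\jmath}\to 0$; and $\|C^{\jmath+1}-C^{\jmath}\|_F^2\to 0$ gives $C^{\jmath+1}-C^{\jmath}\to 0$, which is \eqref{lim-1}. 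The main obstacle is the first paragraph: choosing the lagged terms with exactly the coefficients $\frac{\gamma-1}{4\gamma}\eta\delta^{2}\tau^{\jmath-1}$ so that their telescoping against the level-$(\jmath+1)$ copies leaves a strictly dissipative remainder (this is where the hypothesis $\delta<1$ enters essentially), together with the lower-boundedness argument, which relies on $l_{\Theta}(C^{\jmath})\equiv 0$ and on completing the square in the augmented-Lagrangian cross term. The remaining steps are routine bookkeeping and require no boundedness assumption on the iterates.
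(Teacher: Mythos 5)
Your proposal is correct and is essentially the paper's argument: the paper telescopes the one-step estimate of Lemma \ref{lem:mono} directly (so the lagged $\delta^{2}\tau^{\jmath-1}$ terms and the negative $\tau^{\jmath-1}$ terms combine into $(\delta^{2}-1)\tau^{\jmath-1}$ terms plus boundary terms at $\jmath=1$), bounds $L_{\eta}$ below by $-\|S\|_F^{2}/(2\eta)$ via the same completion of the square, and then uses $\tau_{X_i}^{\jmath},\tau_{Y_i}^{\jmath}\ge\varepsilon$ to pass from square-summability to the limits. Your Lyapunov function $\Xi^{\jmath}$ is just a repackaging of that telescoping, so the two proofs coincide in substance.
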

\begin{proof}
By Lemma \ref{lem:mono}, one has
\begin{equation}\label{bound-3}
	\begin{aligned}
		&L_{\eta}\left(X^{{\mathcal J}+1},Y^{{\mathcal J}+1},C^{{\mathcal J}+1};S\right)- L_{\eta}\left(X^0,Y^0,C^0;S\right)+\frac{\eta}{2}\sum_{\jmath=1}^{\mathcal J}\left\|C^{\jmath+1}-C^{\jmath}\right\|_F^2 \\
		\le& \frac{\gamma-1}{4\gamma}\sum_{\jmath=1}^{\mathcal J}\sum_{i=1}^s\eta\tau_{X_i}^{\jmath-1}\left(\delta^2-1\right)\left\|X_i^{\jmath}-X_i^{\jmath-1}\right\|_F^2+\frac{\gamma-1}{4\gamma}\sum_{i=1}^s\eta\tau_{X_i}^0 \left\|X_i^1-X_i^0\right\|_F^2 \\ &+\frac{\gamma-1}{4\gamma}\sum_{\jmath=1}^{\mathcal J}\sum_{i=1}^s\eta\tau_{Y_i}^{\jmath-1}\left(\delta^2-1\right)\left\|Y_i^{\jmath}-Y_i^{\jmath-1}\right\|_F^2+\frac{\gamma-1}{4\gamma}\sum_{i=1}^s\eta\tau_{Y_i}^0 \left\|Y_i^1-Y_i^0\right\|_F^2.
	\end{aligned}
\end{equation}
Note that 
\begin{equation}
\begin{aligned}
&L_{\eta}\left(X^{{\mathcal J}+1},Y^{{\mathcal J}+1},C^{{\mathcal J}+1};S\right)\\
=&\Phi\left(X^{{\mathcal J}+1}\right)+\tilde\Phi\left(Y^{{\mathcal J}+1}\right)+l_{\Theta}\left(C^{{\mathcal J}+1}\right)+ \frac{\eta}{2}\left\|X^{{\mathcal J}+1}\left(Y^{{\mathcal J}+1}\right)^T-C^{{\mathcal J}+1}+\frac{S}{\eta}\right\|_F^2-\frac{\left\|S\right\|_F^2}{2\eta}\\
\ge& -\frac{\left\|S\right\|_F^2}{2\eta}>-\infty,
\end{aligned}
\end{equation}
where the last inequality uses  the boundedness of $S$.
Combining this with \eqref{bound-3} and $ \tau_{X_i}^{\jmath-1}\ge \varepsilon $, $ \tau_{Y_i}^{\jmath-1}\ge \varepsilon $ gives
 \begin{equation*}
 	\sum_{\jmath=1}^\infty\left\|X^{\jmath+1}-X^{\jmath}\right\|_F^2<\infty,\quad \sum_{\jmath=1}^\infty\left\|Y^{\jmath+1}-Y^{\jmath}\right\|_F^2<\infty, \quad \sum_{\jmath=1}^\infty\left\|C^{\jmath+1}-C^{\jmath}\right\|_F^2<\infty.
 \end{equation*}
Therefore, $ \lim _{\jmath \to \infty}\left(X^{\jmath+1}-X^{\jmath}\right)=0 $, $ \lim _{\jmath \to \infty}\left(Y^{\jmath+1}-Y^{\jmath}\right)=0 $, $ \lim _{\jmath \to \infty}\left(C^{\jmath+1}-C^{\jmath}\right)=0 $.
\end{proof}

\begin{theorem}\label{thm:local}
Suppose that the sequence $ \left\lbrace X^{\jmath},Y^{\jmath},C^{\jmath}\right\rbrace$ generated by Algorithm \ref{alg2} is bounded. Then the sequence  $ \left\lbrace X^{\jmath},Y^{\jmath},C^{\jmath}\right\rbrace$ has at least one accumulation point, and any accumulation point $ \left\lbrace X^{\star},Y^{\star},C^{\star}\right\rbrace$ is a stationary point of the optimization problem \eqref{IAL}.
\end{theorem}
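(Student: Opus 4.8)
The plan is to combine the trivial existence of accumulation points with a limiting argument on the first-order optimality conditions of the three block updates in Algorithm \ref{alg2}. Since $\{X^\jmath,Y^\jmath,C^\jmath\}$ is bounded, the Bolzano--Weierstrass theorem yields a subsequence $\{X^{\jmath_l},Y^{\jmath_l},C^{\jmath_l}\}$ converging to some $(X^\star,Y^\star,C^\star)$, which establishes the first assertion. To set up the limit I would first record, using Lemma \ref{lem:lim}, that $X^{\jmath_l+1}\to X^\star$, $Y^{\jmath_l+1}\to Y^\star$, $C^{\jmath_l+1}\to C^\star$ and $X^{\jmath_l-1}\to X^\star$, $Y^{\jmath_l-1}\to Y^\star$; since the extrapolation weights satisfy $w_{x_i}^\jmath,w_{y_i}^\jmath<1$ (they are bounded above by $\hat\gamma_i^\jmath=(t_{i-1}^\jmath-1)/t_i^\jmath<1$), the extrapolated matrices obey $\bar X_i^{\jmath_l}\to X_i^\star$ and $\bar Y_i^{\jmath_l}\to Y_i^\star$. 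Boundedness of the iterates also makes the stepsizes $\tau_{X_i}^\jmath=\max\{\gamma\|Y_i^\jmath\|^2,\varepsilon\}$, $\tau_{Y_i}^\jmath=\max\{\gamma\|X_i^{\jmath+1}\|^2,\varepsilon\}$ — hence $\sigma_{X_i}^\jmath=\eta\tau_{X_i}^\jmath$, $\sigma_{Y_i}^\jmath=\eta\tau_{Y_i}^\jmath$ — bounded, while being bounded below by $\varepsilon$; passing to a further subsequence I may assume all of them converge, and then $G_i^{\jmath_l}\to G_i^\star:=C^\star-\sum_{l\neq i}X_l^\star(Y_l^\star)^T-S/\eta$.

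The core step is to pass to the limit in Fermat's rule for \eqref{opt:X}, \eqref{opt:Y} and \eqref{solve:C}. Because $X_i^{\jmath+1}$ is a global minimizer of the subproblem in \eqref{opt:X} and the quadratic term is smooth, the sum rule gives $0\in\partial\big(n_i\phi(\|\cdot\|_p)\big)\big(X_i^{\jmath+1}\big)+\sigma_{X_i}^\jmath\big(X_i^{\jmath+1}-\bar X_i^\jmath\big)+\eta\big(\bar X_i^\jmath(Y_i^\jmath)^T-G_i^\jmath\big)Y_i^\jmath$, using $\sigma_{X_i}^\jmath=\eta\tau_{X_i}^\jmath$ to simplify the linearized term. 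Along $\jmath=\jmath_l$, the bounded factor $\sigma_{X_i}^\jmath$ multiplies $X_i^{\jmath+1}-\bar X_i^\jmath\to 0$, so that term vanishes, and the last term tends to $\eta\big(X^\star(Y^\star)^T-C^\star+S/\eta\big)Y_i^\star$, which is precisely $\nabla_{X_i}$ of the smooth part of $L_\eta$ at $(X^\star,Y^\star,C^\star)$. Invoking outer semicontinuity (closed graph) of the limiting subdifferential of the locally Lipschitz function $n_i\phi(\|\cdot\|_p)$ — with $p=2$ as in the footnote, the closed-form proximal map of Lemma \ref{thm:lfp} makes this subdifferential fully explicit — the subgradients at $X_i^{\jmath_l+1}$ accumulate at a subgradient at $X_i^\star$, so $0\in\partial\big(n_i\phi(\|\cdot\|_p)\big)(X_i^\star)+\eta\big(X^\star(Y^\star)^T-C^\star+S/\eta\big)Y_i^\star$. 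The identical argument on \eqref{opt:Y} gives the $Y_i$-block inclusion, and for \eqref{solve:C} the optimality condition $0\in\partial l_\Theta(C^{\jmath+1})+\eta\big(C^{\jmath+1}-X^{\jmath+1}(Y^{\jmath+1})^T-S/\eta\big)$ passes to the limit using closedness of $\Theta$. Collecting the three blocks yields $0\in\partial L_\eta(X^\star,Y^\star,C^\star;S)$, which is the stationarity condition for \eqref{IAL}.

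The step I expect to be the main obstacle is the bookkeeping inside this limit: the linearized gradient in the $X_i$-update is evaluated at a Gauss--Seidel mixture of blocks ($X_l^{\jmath+1}$ for $l<i$ and $X_l^\jmath$ for $l>i$, together with $Y_l^\jmath$ for all $l$, all hidden inside $G_i^\jmath$) and at the extrapolated point $\bar X_i^\jmath$ rather than at $X_i^\jmath$, so one must verify that every one of these surrogates collapses to the same common limit — this is exactly where Lemma \ref{lem:lim}, the bound $w_{x_i}^\jmath<1$, and the two-sided bounds on the stepsizes are all needed at once. A minor technical point is the nonsmooth subdifferential calculus for $n_i\phi(\|\cdot\|_p)$ at limit points with $X_i^\star=0$ and for the indicator $l_\Theta$, but this is routine once one works with the limiting (Mordukhovich) subdifferential and its outer semicontinuity, so I do not anticipate real difficulty there.
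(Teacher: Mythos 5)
Your proposal is correct and follows essentially the same route as the paper: Bolzano--Weierstrass for existence, then the first-order optimality conditions of the proximal subproblems \eqref{opt:X}, \eqref{opt:Y}, \eqref{solve:C} combined with Lemma \ref{lem:lim} to pass to the limit and obtain the stationarity inclusions \eqref{KKT-X}, \eqref{KKT-Y}, \eqref{KKT-C}. The only difference is that you spell out the bookkeeping the paper leaves implicit (boundedness of $\tau_{X_i}^{\jmath},\tau_{Y_i}^{\jmath}$, convergence of the extrapolated points and of $G_i^{\jmath}$ along the subsequence, and outer semicontinuity of the limiting subdifferential), which strengthens rather than changes the argument.
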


\begin{proof}
From Bolzano-Weierstrass Theorem \cite{Bro12}, Algorithm \ref{alg2} has at least one accumulation point $ \left\lbrace X^\star,Y^\star,C^\star\right\rbrace $ and there exists one sequences that converges to this accumulation point. Without loss of generality, we assume that the sequence is $ \left\lbrace X^{\jmath},Y^{\jmath},C^{\jmath}\right\rbrace $.
	
For the $ X_i $-subproblem, by first-order necessary optimality condition, we get
\begin{equation}\label{fo-X}
	0 \in n_i\partial\phi\left(\left\| X_i^{\jmath+1} \right\|_p\right)+\sigma_{X_i}^{\jmath}\left(X_i^{\jmath+1}-\bar X_i^{\jmath} \right)+\eta\left( \bar X_i^{\jmath}{(Y_i^{\jmath})^T} - G_i^{\jmath} \right)Y_i^{\jmath}.
\end{equation}
According to $ \lim_{\jmath \to\infty}\left(X^{\jmath+1}-X^{\jmath}\right)=0 $ and $ \lim_{\jmath \to\infty}\left(Y^{\jmath+1}-Y^{\jmath}\right)=0 $, we obtain that
\begin{equation}\label{KKT-X}
	0 \in n_i\partial\phi\left(\left\| X_i^\star \right\|_p\right)+\left(\eta X^\star {Y^\star}^T-\eta C^\star+S\right) Y_i^\star.
\end{equation}

For the $ Y_i $-subproblem, by first-order necessary optimality condition, we get
\begin{equation}\label{fo-Y}
	0 \in n_i\partial\phi\left(\left\| Y_i^{\jmath+1} \right\|_p\right)+\sigma_{Y_i}^{\jmath}\left(Y_i^{\jmath+1}-\bar Y_i^{\jmath} \right)+\eta\left(X_i^{\jmath+1}\left(\bar Y_i^{\jmath}\right)^T - G_i^{\jmath} \right)^TX_i^{\jmath+1}.
\end{equation}
According to $ \lim_{\jmath \to\infty}\left(X^{\jmath+1}-X^{\jmath}\right)=0 $ and $ \lim_{\jmath \to\infty}\left(Y^{\jmath+1}-Y^{\jmath}\right)=0 $, we obtain that
\begin{equation}\label{KKT-Y}
	0 \in n_i\partial\phi\left(\left\| Y_i^\star \right\|_p\right)+\left(\eta X^\star {Y^\star}^T-\eta C^\star+S\right)^T X_i^\star.
\end{equation}
	
For the $ C $-subproblem, by first-order necessary optimality condition, we get
\begin{equation*}
	0\in \eta\left(C^{\jmath+1}-X^{\jmath+1}(Y^{\jmath+1})^T - \frac{S}{\eta}\right)+N_\Theta\left(C^{\jmath+1}\right).
\end{equation*}
Thus,
\begin{equation}\label{KKT-C}
	0 \in \eta\left(C^{\star}-X^{\star}(Y^{\star})^T - \frac{S}{\eta}\right) + N_\Theta\left(C^{\star}\right).
\end{equation}
	
By \eqref{KKT-X}, \eqref{KKT-Y} and \eqref{KKT-C}, we know that $ \left\lbrace X^\star,Y^\star,C^\star\right\rbrace$ is a stationary point of the optimization problem \eqref{IAL}, which completes the proof.
\end{proof}

\section{Experimental Results}

In this section, we compare our performance to state-of-the-art matrix completion methods, including group sparsity-based methods FGSR \cite{FDCU19} and GUIG \cite{JFWZ21}, matrix factorization-based method NMFC \cite{XYWZ12}, and nuclear norm-based method SVT \cite{CCS10}.
In order to demonstrate the efficiency of the proposed method, we will present the results of matrix completion experiments on two typical types of matrix data, namely, grayscale image and high altitude aerial images. As part of 
our quantitative evaluation, we use four numerical metrics, namely peak signal-to-noise ratio (PSNR) \cite{WBSS04}, structural similarity index measure (SSIM) \cite{WBSS04} and the recovery computation time.
All experiments are conducted in Matlab R2020b under Windows 11 on a desktop of a 2.50 GHz CPU and 16 GB memory.

Parameter Settings: The parameters of compared algorithms are set as described in their papers, and we take the best results as the final results. In FLGSR, If not specified, $ \eta^0,~\vartheta,~\epsilon_0,~[\rho_1,~\rho_2,~\rho_3] $ are set as 1e-3, 10, 10 and $ [0.999, 0.5, 0.5] $, respectively.
The capped concave function $ \phi $ is set as $\phi^{\operatorname{CapLog}}(t)$. The matrix is divided into 32 groups. All matrix data are prescaled to $ \left[0,1\right]  $.

\subsection{Model analysis}

In this part, we analyze the effectiveness of flexible grouping and the restarted technique in the proposed algorithm FLGSR, and test it on four images from the USC-SIPI Image Database\footnote{https://sipi.usc.edu/database/database.php?volume=misc.}: ``Peppers", ``Sailboat", ``Bridge" and ``Mandrill". All images have a size of $512 \times 512$. The sampling rate (SR) is set to be 70\% in the experiments.

\subsubsection{Effects of flexible grouping}\label{sub:fg}
In this subsection, we compare the effects of different number of groups on our proposed algorithm FLGSR. The curves of PSNR, SSIM and running time with respect to different number of groups are shown in Figure \ref{fig:group}. From the recovery results, as the number of groups increases, the recovery effect of FLGSR first increases and then decreases, reaching the best near 16 groups. In terms of computational time, FLGSR takes significantly longer as the number of groups increases. When the number of groups reaches 512 (that is, each column is treated as a group like FGSR and GUIG), the time it consumes is about six times that of 16 groups. Therefore, the flexible grouping strategy of our proposed FLGSR method is effective in terms of both recovery quality and computational efficiency.

\begin{figure*}[htbp]
	\centering
	\begin{subfigure}[b]{1\linewidth}
		\begin{subfigure}[b]{0.333\linewidth}
			\centering
			\includegraphics[width=\linewidth]{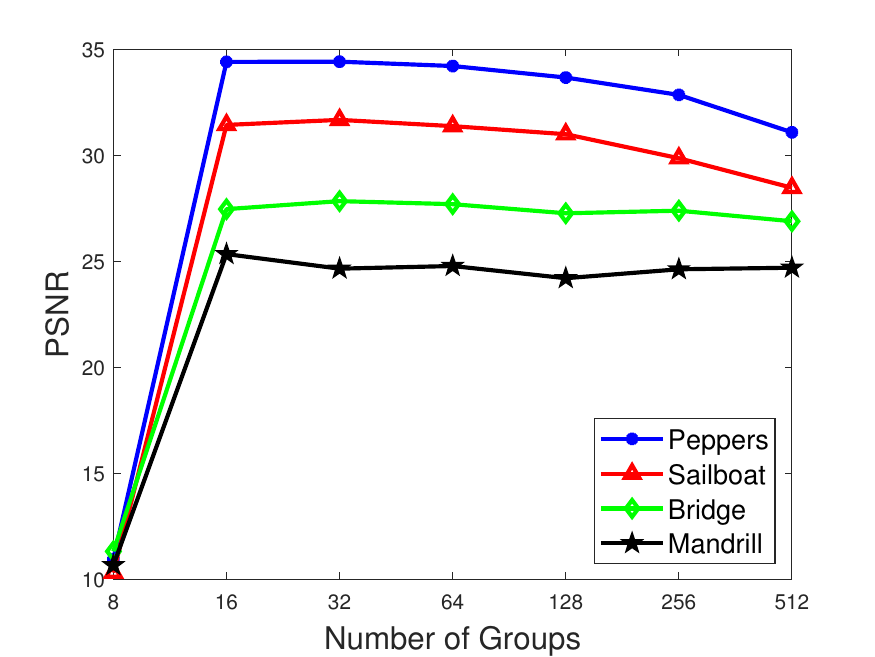}
%			\caption*{PSNR}
		\end{subfigure}   	
		\begin{subfigure}[b]{0.333\linewidth}
			\centering
			\includegraphics[width=\linewidth]{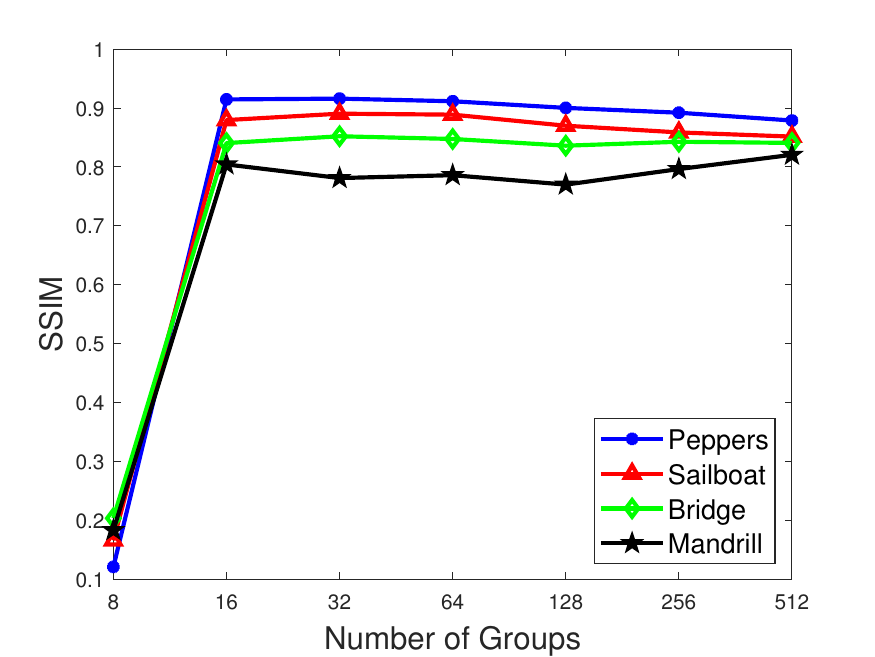}
%			\caption*{SSIM}
		\end{subfigure}
		\begin{subfigure}[b]{0.333\linewidth}
			\centering
			\includegraphics[width=\linewidth]{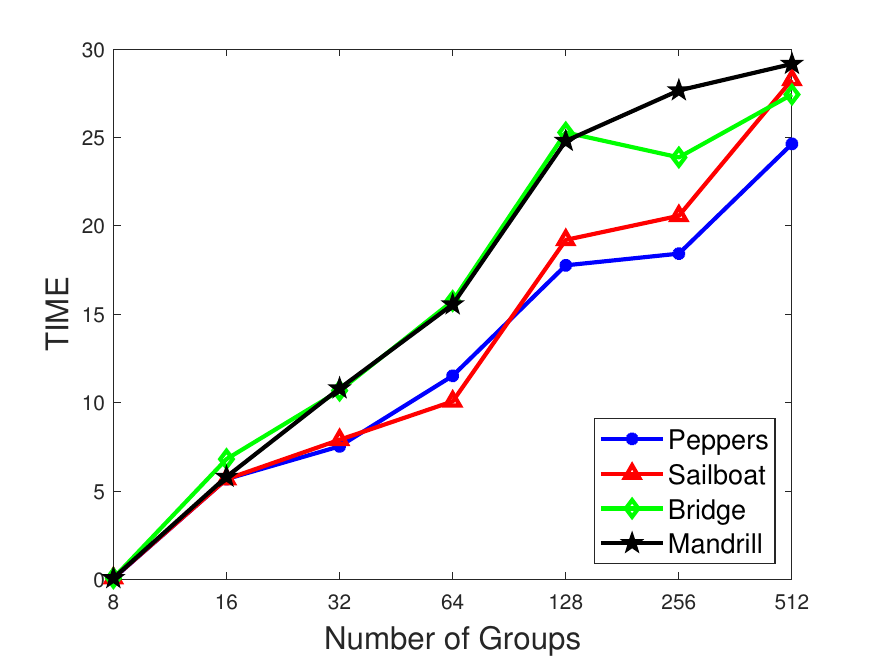}
%			\caption*{TIME}
		\end{subfigure}
	\end{subfigure}
	\vfill
	\caption{The PSNR, SSIM and running time with different number of groups.}
	\label{fig:group}
\end{figure*}

\subsubsection{Effects of the restarted technique}
In this subsection, we compare the effects of restarted technique on our proposed algorithm FLGSR. The bar charts of PSNR, SSIM and running time with respect to different iteration methods for $S$ are shown in Figure \ref{fig:Restarted}. From the PSNR and SSIM metrics, which measure the recovery effect, we can see that using the restarted technique on the Lagrange multiplier matrix ($S$) in our proposed algorithm FLGSR can enhance the recovery quality; from the running time, we can see that using the restarted technique on $S$ can significantly reduce the computational time, which is about four times faster on average than not using the restarted technique on $S$.

\begin{figure*}[htbp]
	\centering
	\begin{subfigure}[b]{1\linewidth}
		\centering
		\includegraphics[width=\linewidth]{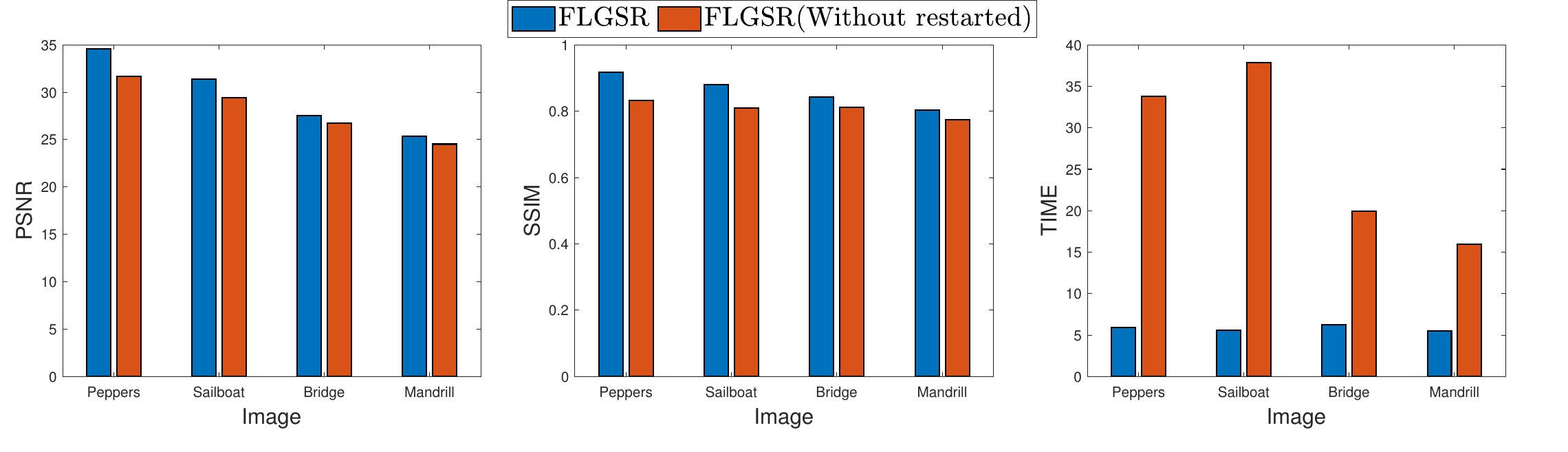}
	\end{subfigure}
	\vfill
	\caption{The PSNR, SSIM and running time with different iteration methods for $S$.}
	\label{fig:Restarted}
\end{figure*}

\subsection{Grayscale image inpainting}
In this subsection, we evaluate all the methods on the USC-SIPI Image Database\footnote{https://sipi.usc.edu/database/database.php?volume=misc.}. For testing, we randomly select 20 images of size $512 \times 512$ pixels from this database. We set the sampling rate $SR$ to 70\%.

We display the inpainting results of the four testing images (``Peppers", ``Sailboat", ``Bridge" and ``Mandrill") in Figure \ref{fig:grayscaleimage}. Enlarged views of parts of the recovered images clearly show the recovery differences. FLGSR recovers the details much better and preserves the surface of peppers, the sail on boat, the tree branch by bridge and the face of mandrill well. It can be seen that FLGSR is superior to other methods.

Table \ref{tab:image} lists the recovery PSNR, SSIM and the corresponding running times of different methods. The highest PSNR and SSIM results are shown in bold.
It can be seen from the table that FLGSR outperforms other methods on all metrics. FLGSR, FGSR and GUIG, which are based on group sparsity, have higher PSNR and SSIM values than NMFC, which is based on matrix factorization, and SVT, which is based on nuclear norm. 
In terms of time consumption, thanks to the idea of flexible grouping, FLGSR consumes much less time than the 1-column-based methods FGSR and GUIG.

In Figure \ref{fig:Box}, we report the PSNR, SSIM and the algorithm running time of different methods on the 20 images. Our method achieves the best performance with an average improvement of 1.9 dB in PSNR and 0.07 in SSIM over the respective second best methods on each image, further verifying its advantages and robustness. In terms of time consumption, our method FLGSR is much faster than other compared methods based on group sparsity. In conclusion, it not only achieves the best inpainting results but also runs very fast.

\begin{table}[htbp]
	\centering
	\caption{Grayscale image inpainting performance comparison: PSNR, SSIM and running time}
	\begin{tabular}{ccccccc}
		\toprule
		Image & Index & FLGSR & FGSR  & GUIG  & NMF   & SVT \\
		\midrule
		\multirow{3}{*}{Peppers} & PSNR  & \textbf{34.391 } & 31.019  & 31.816  & 27.696  & 27.883  \\
		& SSIM  & \textbf{0.905 } & 0.777  & 0.821  & 0.748  & 0.786  \\
		& TIME  & 4.900  & 8.886  & 10.285  & 1.101  & 4.455  \\
		\midrule
		\multirow{3}{*}{Sailboat} & PSNR  & \textbf{31.775 } & 29.850  & 29.170  & 24.724  & 26.004  \\
		& SSIM  & \textbf{0.895 } & 0.811  & 0.781  & 0.673  & 0.776  \\
		& TIME  & 5.295  & 9.598  & 11.335  & 0.339  & 3.773  \\
		\midrule
		\multirow{3}{*}{Bridge} & PSNR  & \textbf{27.455 } & 26.169  & 24.372  & 23.179  & 20.977  \\
		& SSIM  & \textbf{0.840 } & 0.778  & 0.725  & 0.603  & 0.634  \\
		& TIME  & 5.233  & 9.815  & 10.808  & 0.403  & 2.363  \\
		\midrule
		\multirow{3}{*}{Mandrill} & PSNR  & \textbf{25.301 } & 23.864  & 22.620  & 21.574  & 21.395  \\
		& SSIM  & \textbf{0.802 } & 0.717  & 0.647  & 0.523  & 0.657  \\
		& TIME  & 4.554  & 9.776  & 11.312  & 0.331  & 2.345  \\
		\bottomrule
	\end{tabular}%
	\label{tab:image}%
\end{table}%

\begin{figure*}[htbp]
	\centering
	\begin{subfigure}[b]{1\linewidth}
		\begin{subfigure}[b]{0.138\linewidth}
			\centering
			\includegraphics[width=\linewidth]{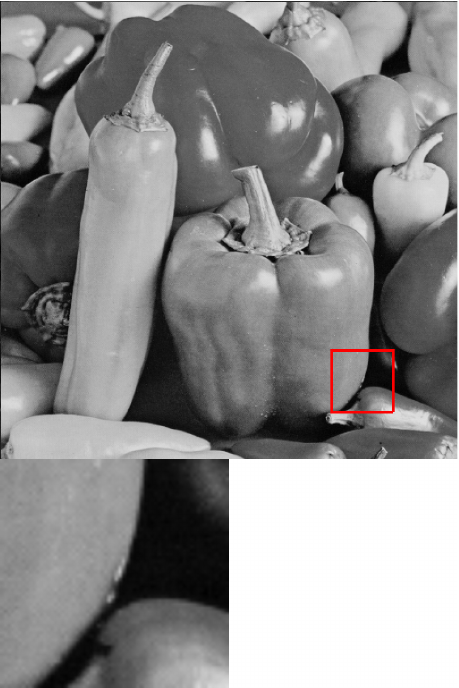}\vspace{0pt}
			\includegraphics[width=\linewidth]{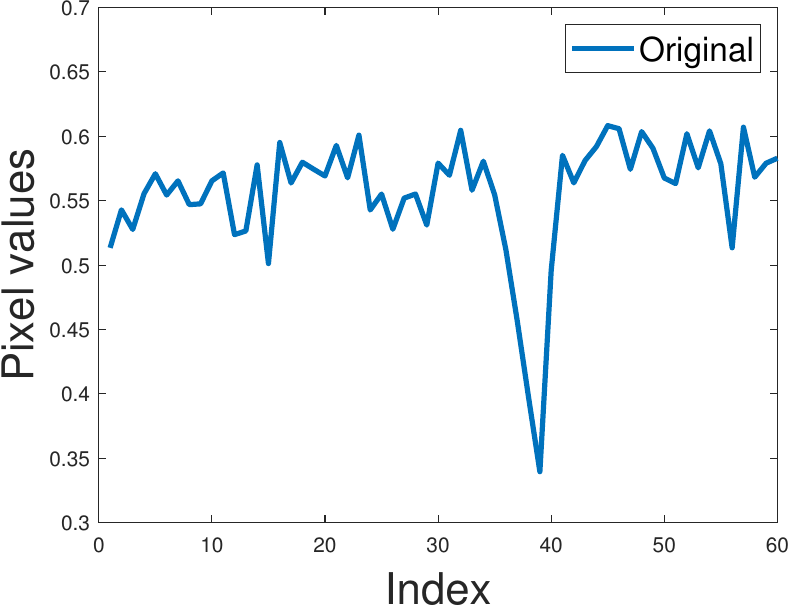}\vspace{0pt}
			\includegraphics[width=\linewidth]{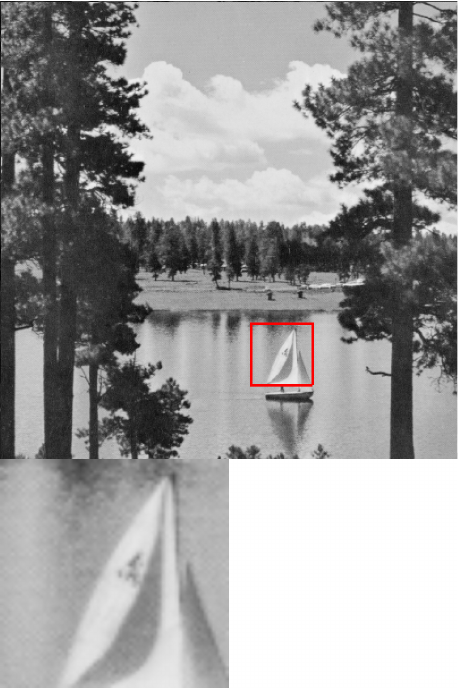}\vspace{0pt}
			\includegraphics[width=\linewidth]{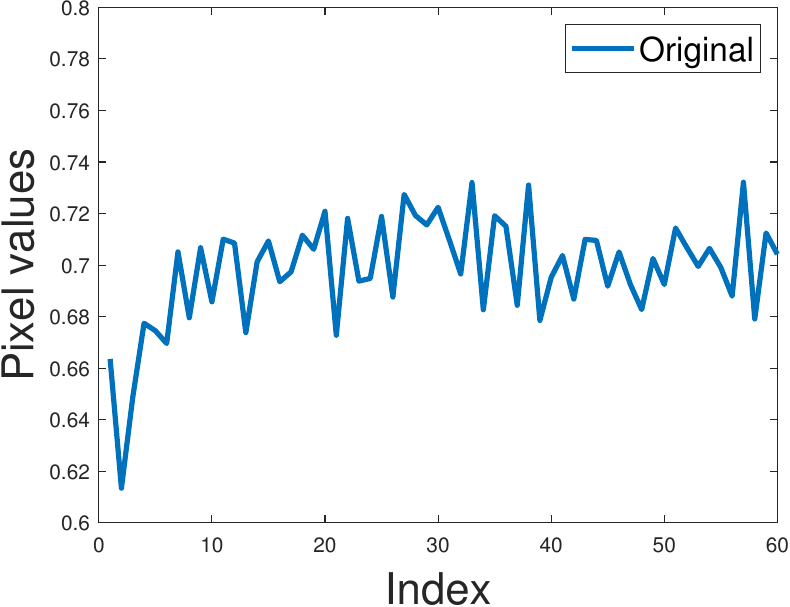}\vspace{0pt}
			\includegraphics[width=\linewidth]{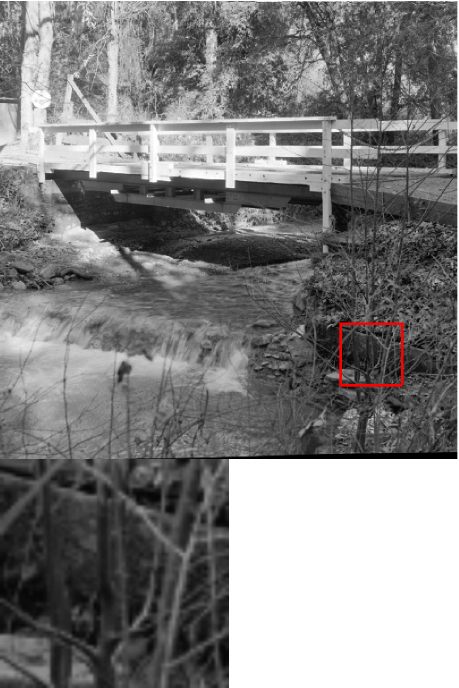}\vspace{0pt}
			\includegraphics[width=\linewidth]{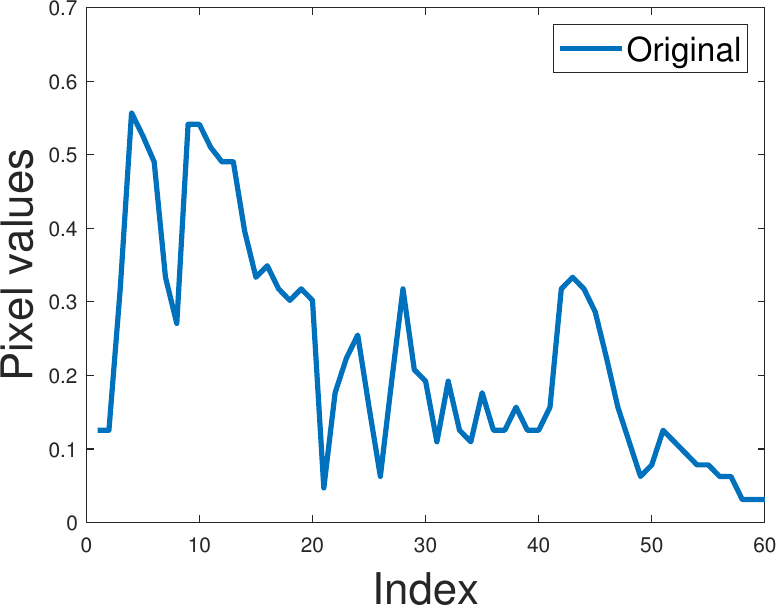}\vspace{0pt}
			\includegraphics[width=\linewidth]{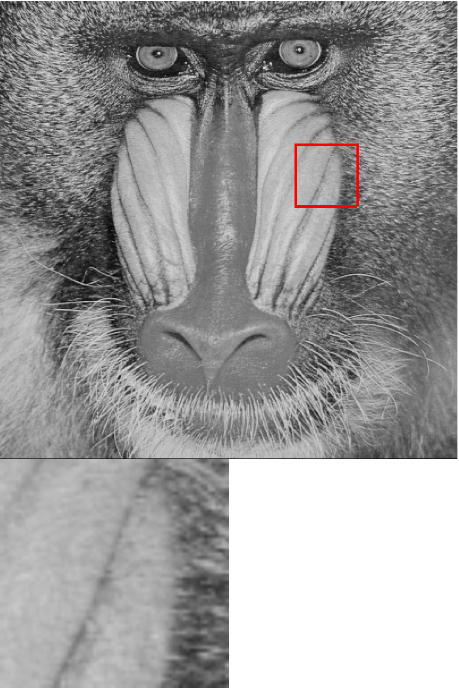}\vspace{0pt}
			\includegraphics[width=\linewidth]{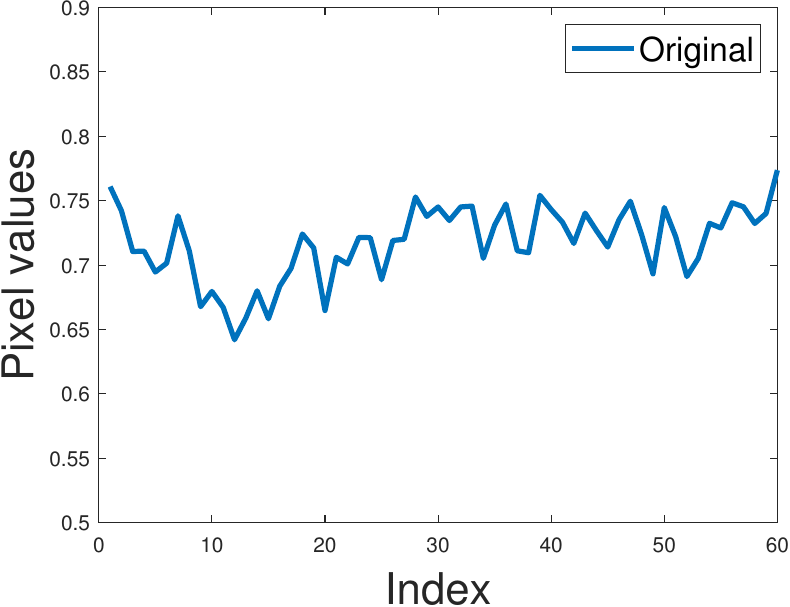}
			\caption*{Original}
		\end{subfigure}   	
		\begin{subfigure}[b]{0.138\linewidth}
			\centering
			\includegraphics[width=\linewidth]{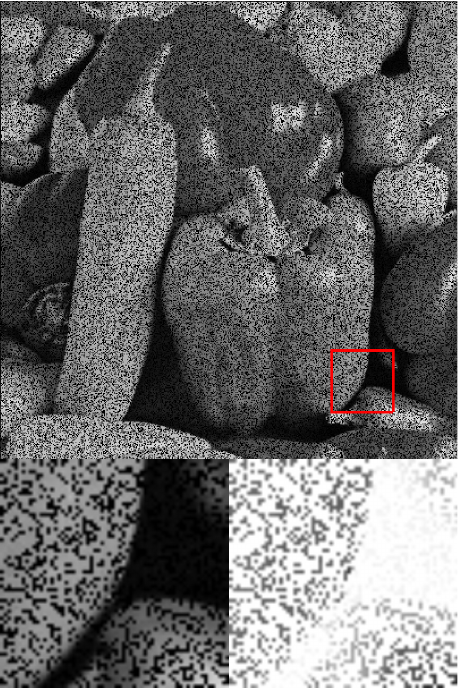}\vspace{0pt}
			\includegraphics[width=\linewidth]{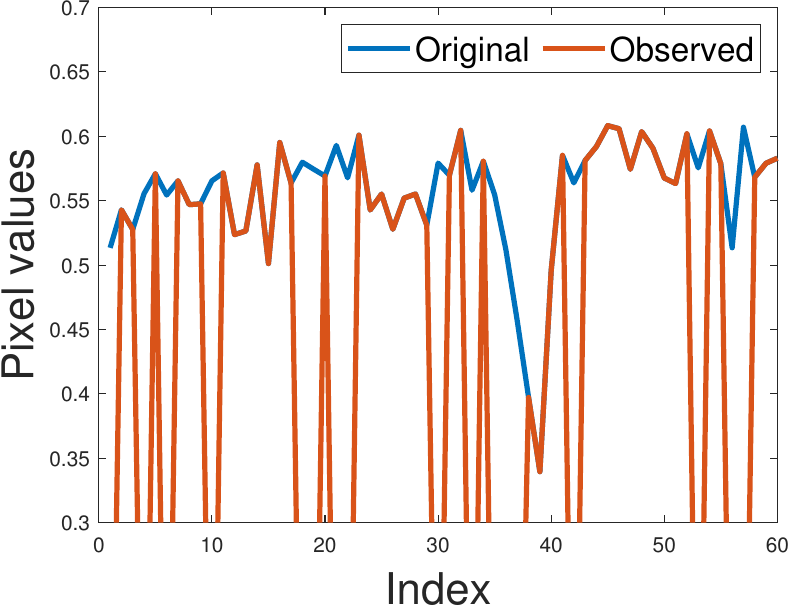}\vspace{0pt}
			\includegraphics[width=\linewidth]{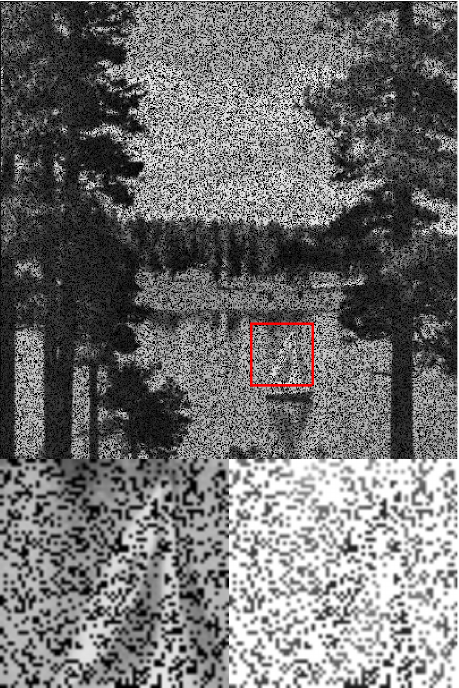}\vspace{0pt}
			\includegraphics[width=\linewidth]{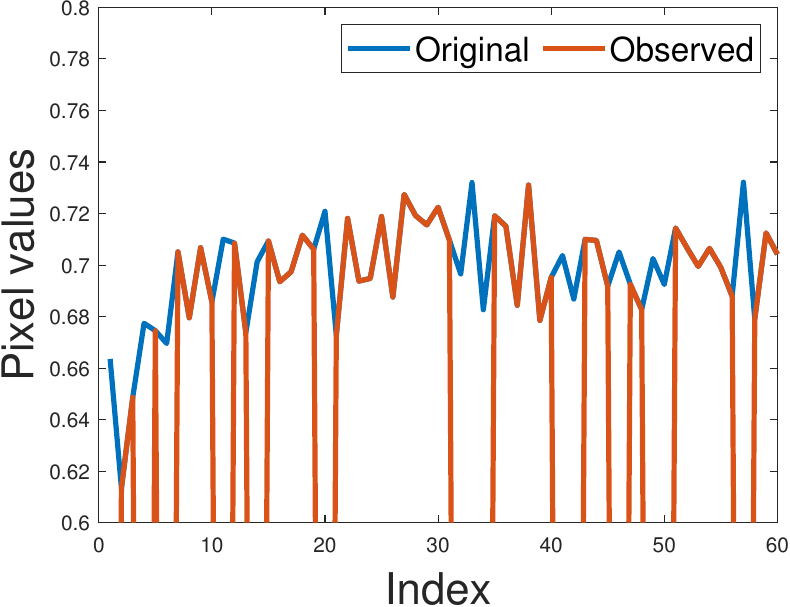}\vspace{0pt}
			\includegraphics[width=\linewidth]{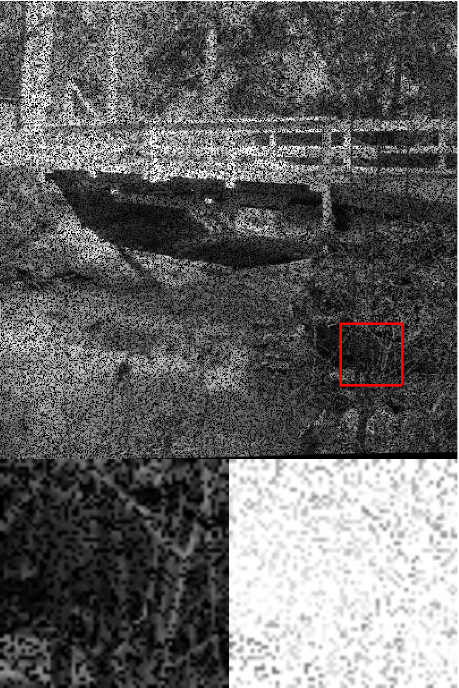}\vspace{0pt}
			\includegraphics[width=\linewidth]{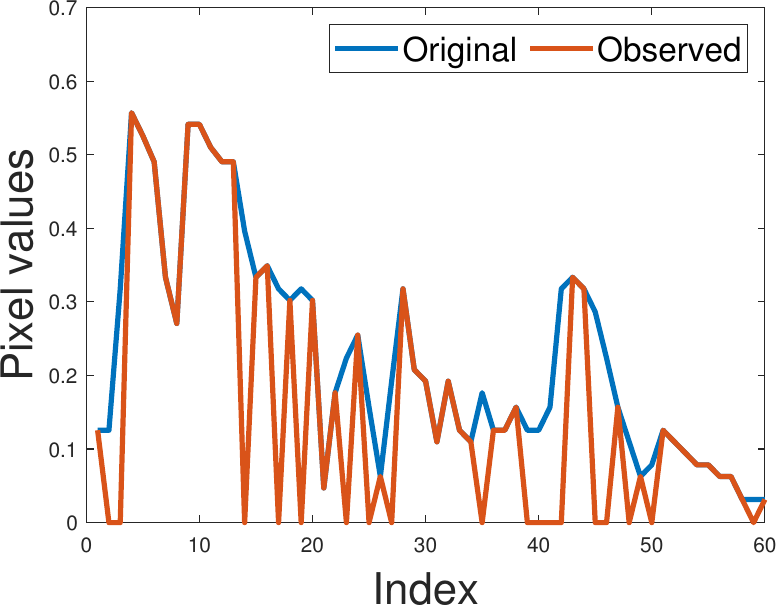}\vspace{0pt}
			\includegraphics[width=\linewidth]{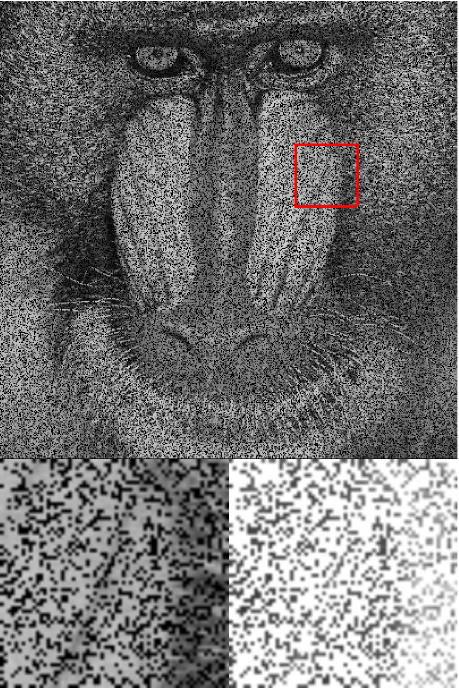}\vspace{0pt}
			\includegraphics[width=\linewidth]{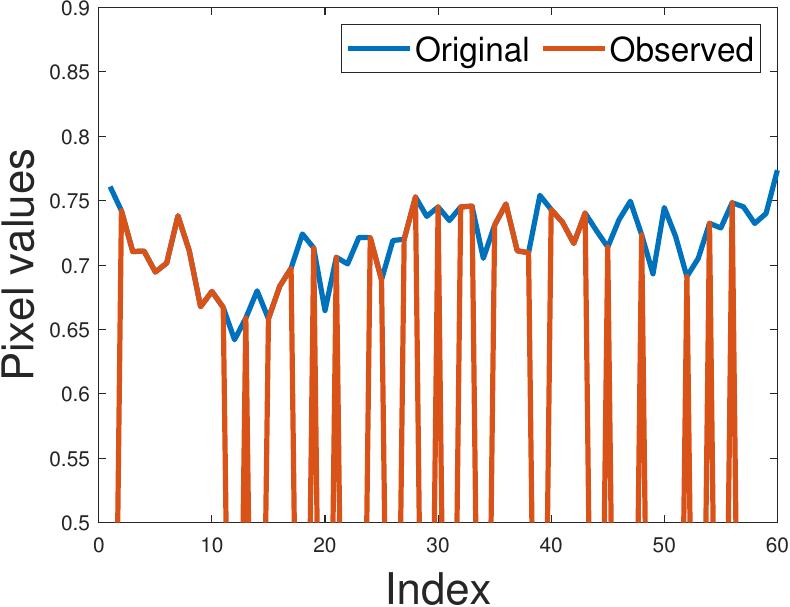}
			\caption*{Observed}
		\end{subfigure}
		\begin{subfigure}[b]{0.138\linewidth}
			\centering
			\includegraphics[width=\linewidth]{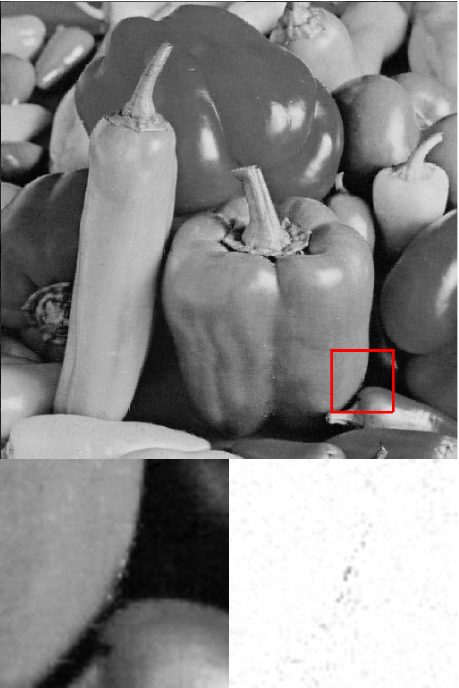}\vspace{0pt}
			\includegraphics[width=\linewidth]{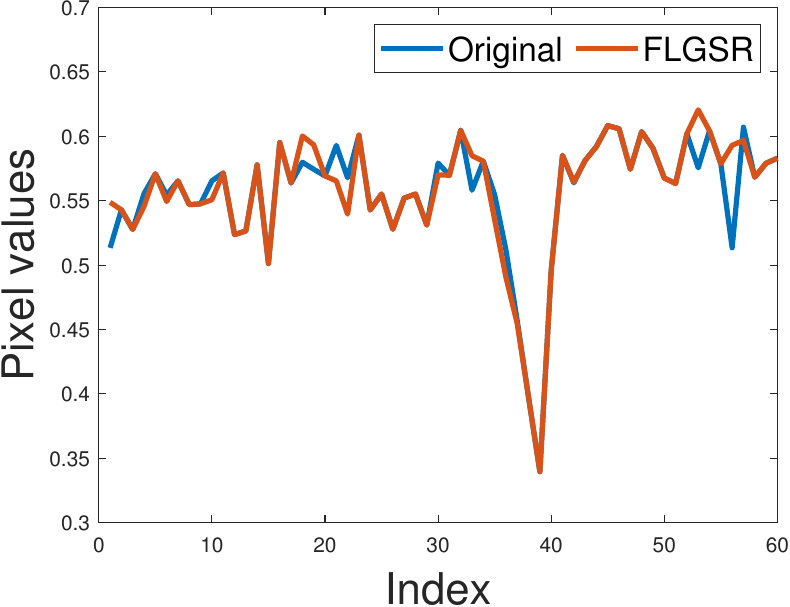}\vspace{0pt}
			\includegraphics[width=\linewidth]{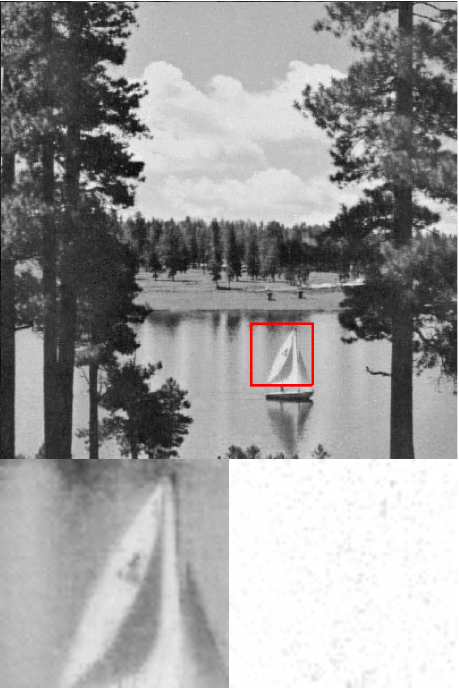}\vspace{0pt}
			\includegraphics[width=\linewidth]{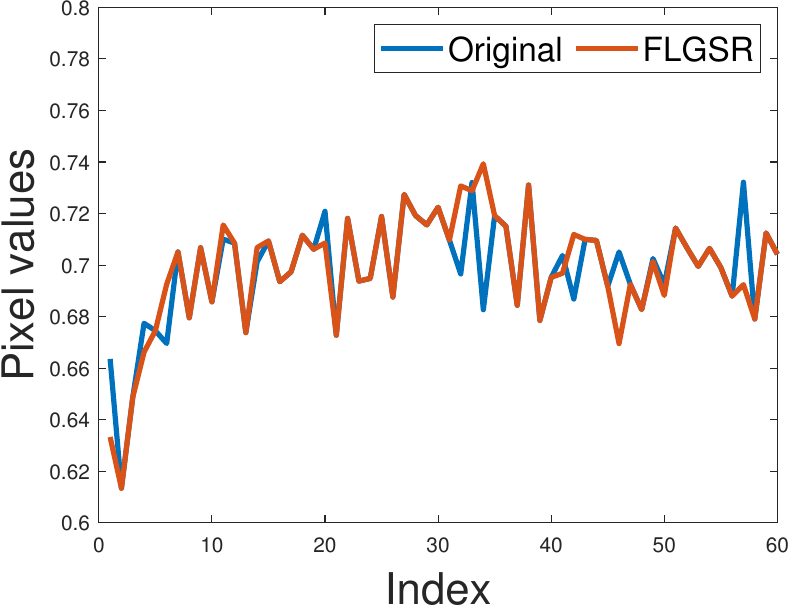}\vspace{0pt}
			\includegraphics[width=\linewidth]{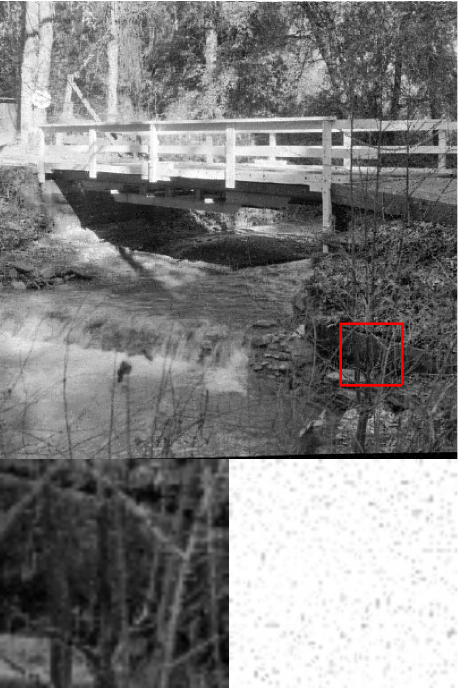}\vspace{0pt}
			\includegraphics[width=\linewidth]{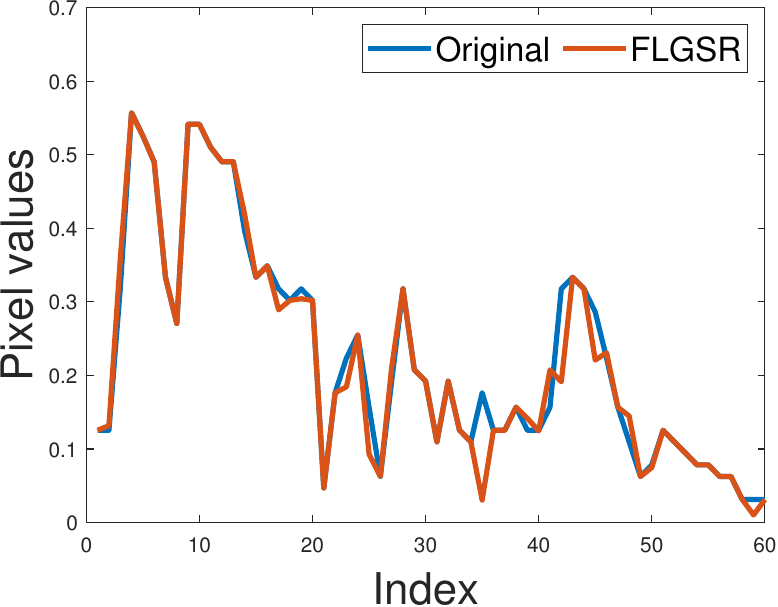}\vspace{0pt}
			\includegraphics[width=\linewidth]{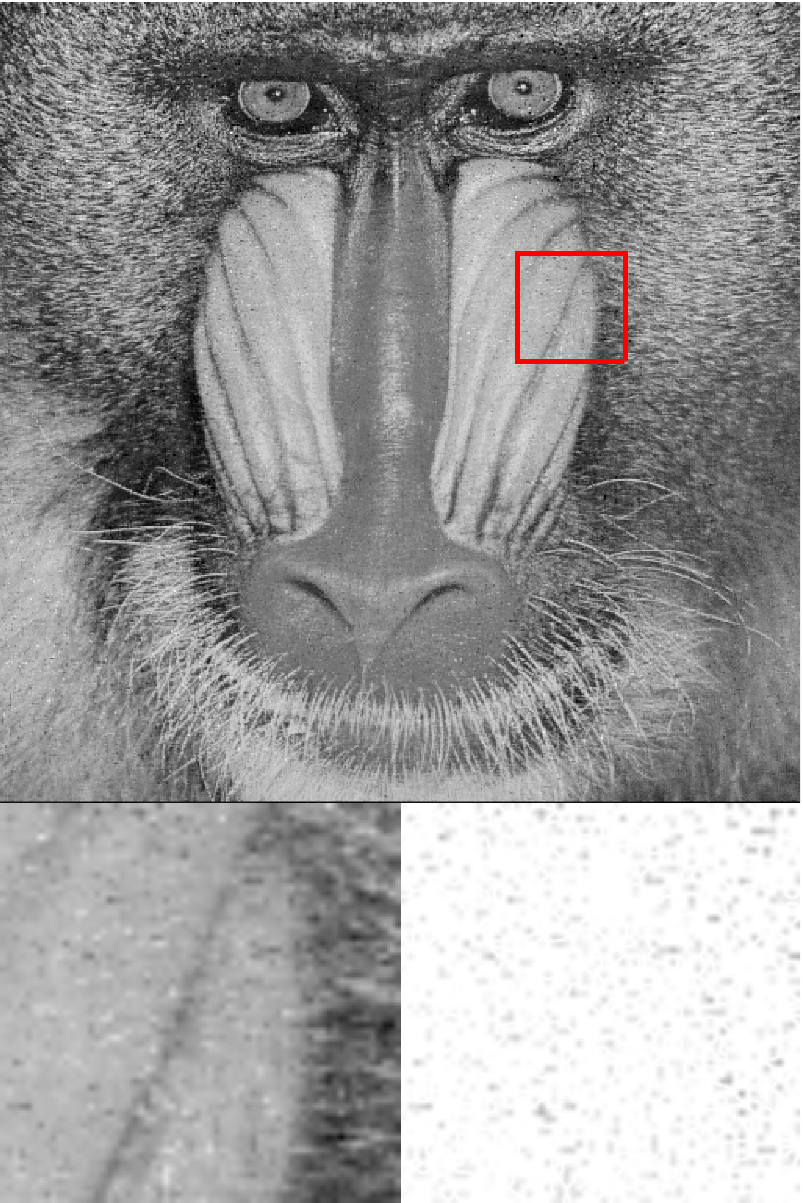}\vspace{0pt}
			\includegraphics[width=\linewidth]{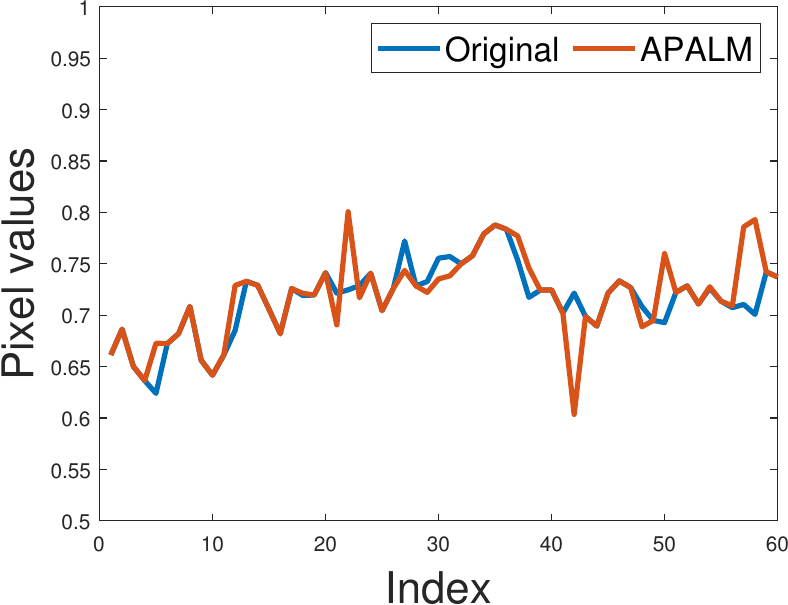}
			\caption*{FLGSR}
		\end{subfigure}
		\begin{subfigure}[b]{0.138\linewidth}
			\centering		
			\includegraphics[width=\linewidth]{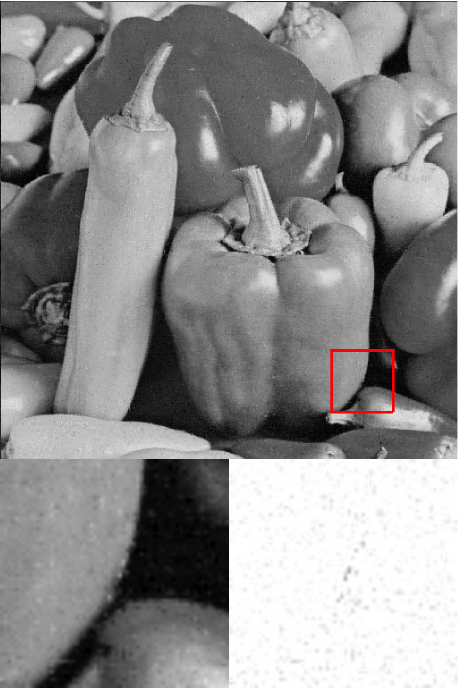}\vspace{0pt}
			\includegraphics[width=\linewidth]{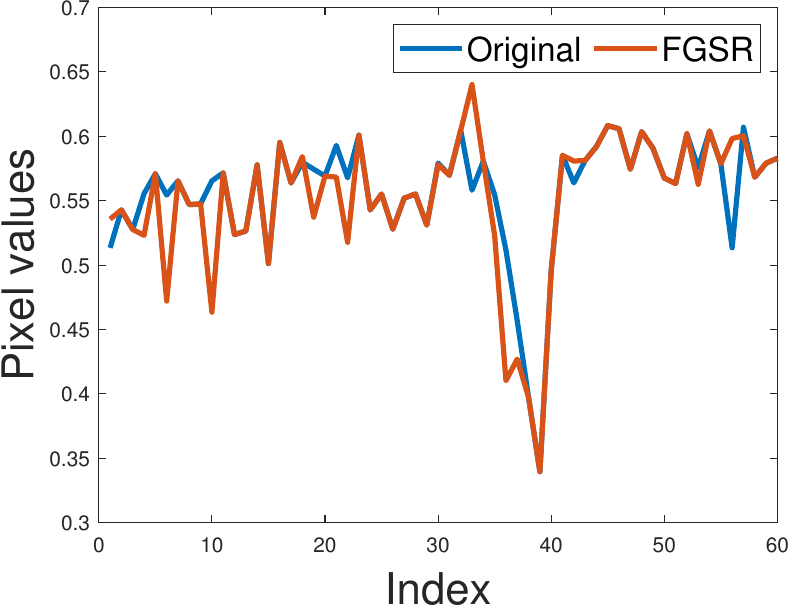}\vspace{0pt}
			\includegraphics[width=\linewidth]{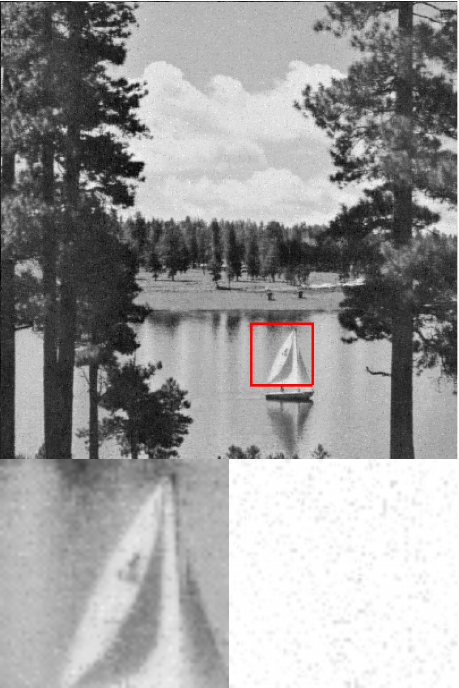}\vspace{0pt}
			\includegraphics[width=\linewidth]{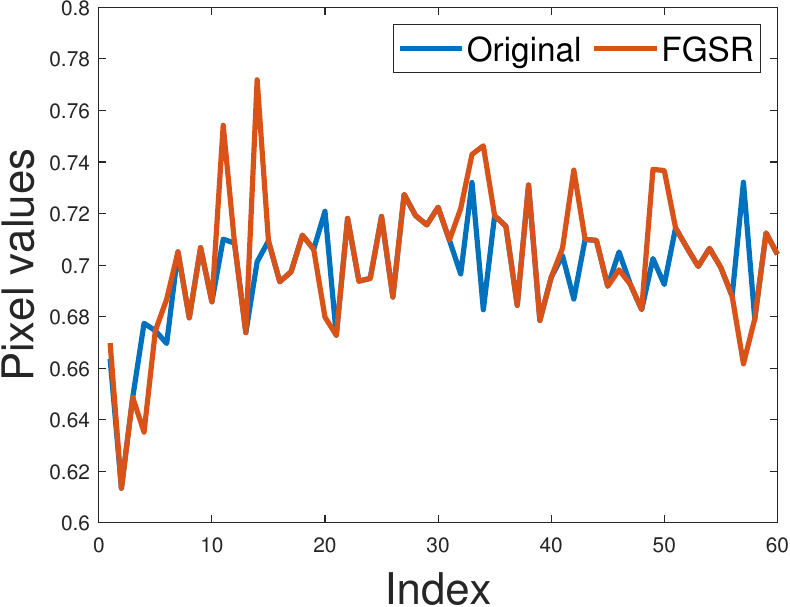}\vspace{0pt}
			\includegraphics[width=\linewidth]{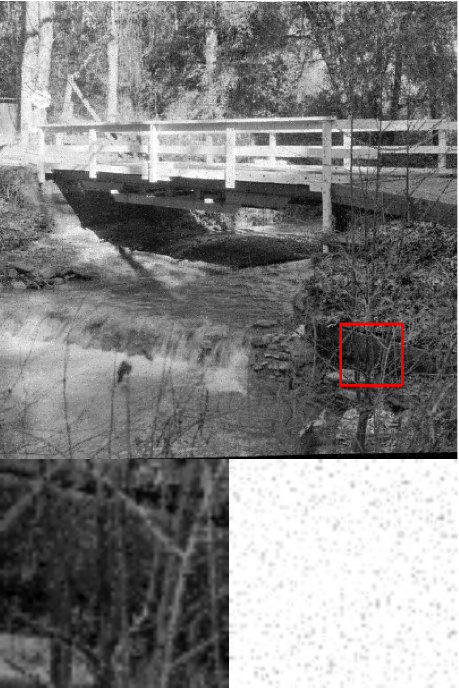}\vspace{0pt}
			\includegraphics[width=\linewidth]{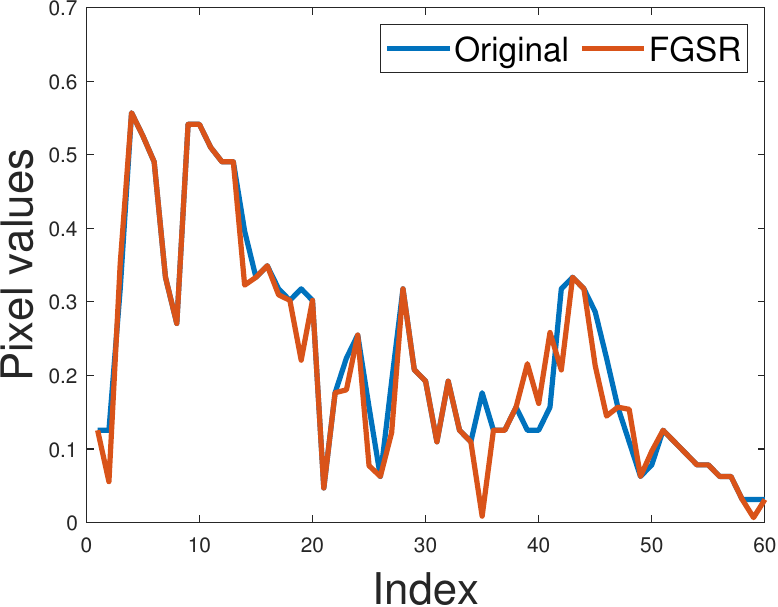}\vspace{0pt}
			\includegraphics[width=\linewidth]{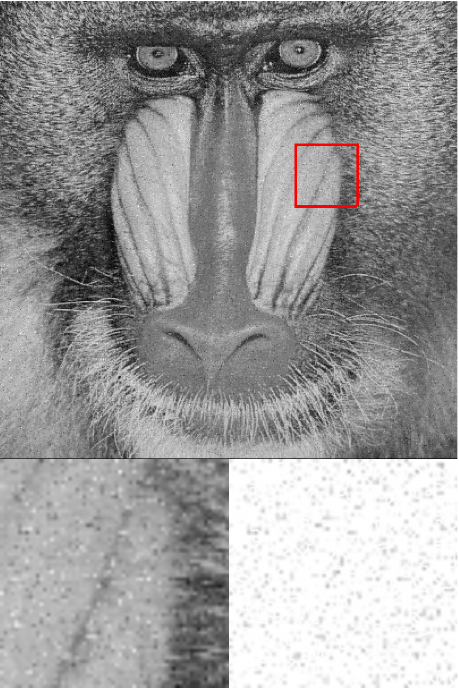}\vspace{0pt}
			\includegraphics[width=\linewidth]{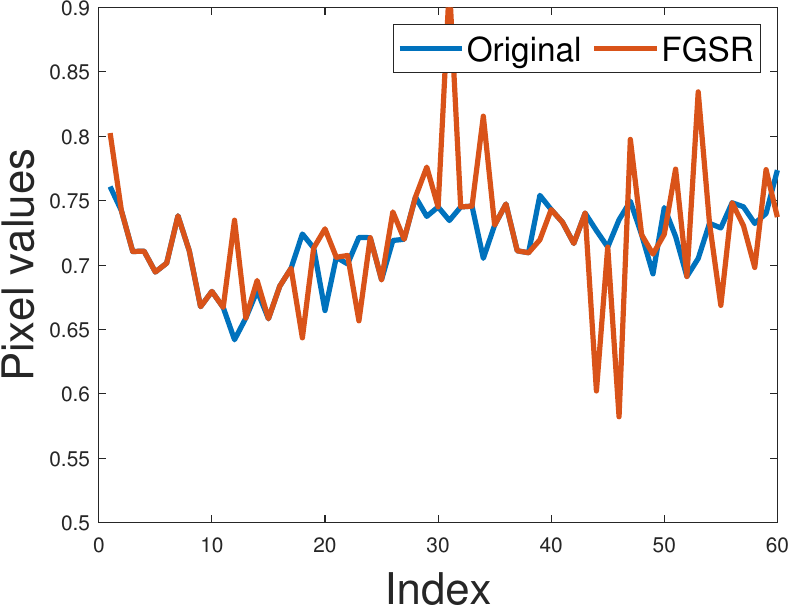}
			\caption*{FGSR}
		\end{subfigure}
		\begin{subfigure}[b]{0.138\linewidth}
			\centering
			\includegraphics[width=\linewidth]{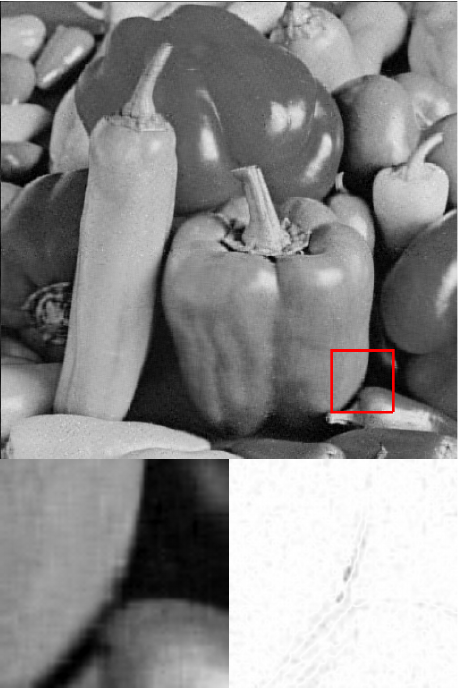}\vspace{0pt}
			\includegraphics[width=\linewidth]{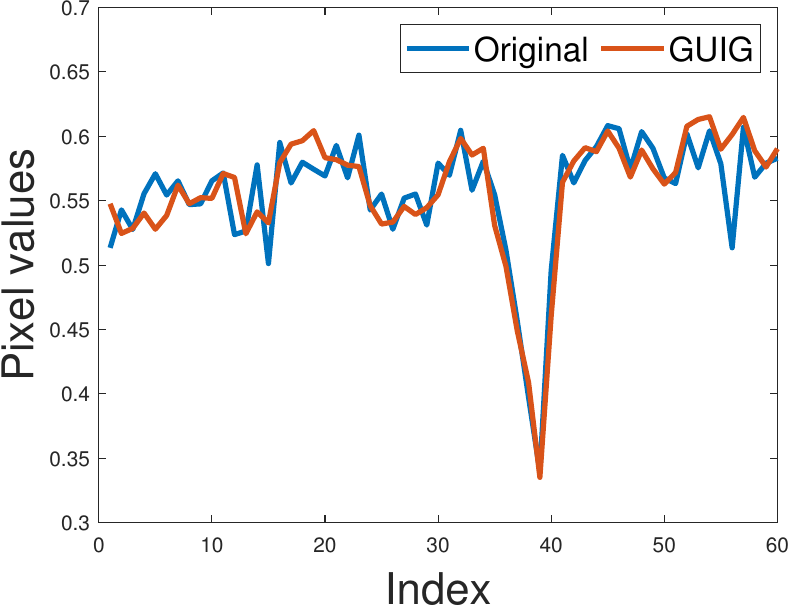}\vspace{0pt}
			\includegraphics[width=\linewidth]{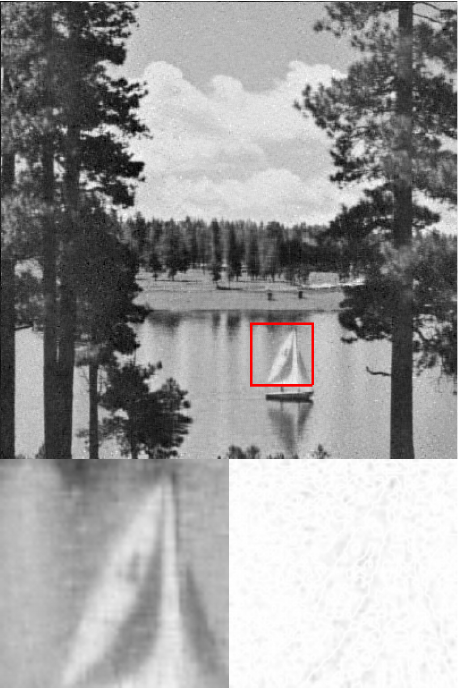}\vspace{0pt}
			\includegraphics[width=\linewidth]{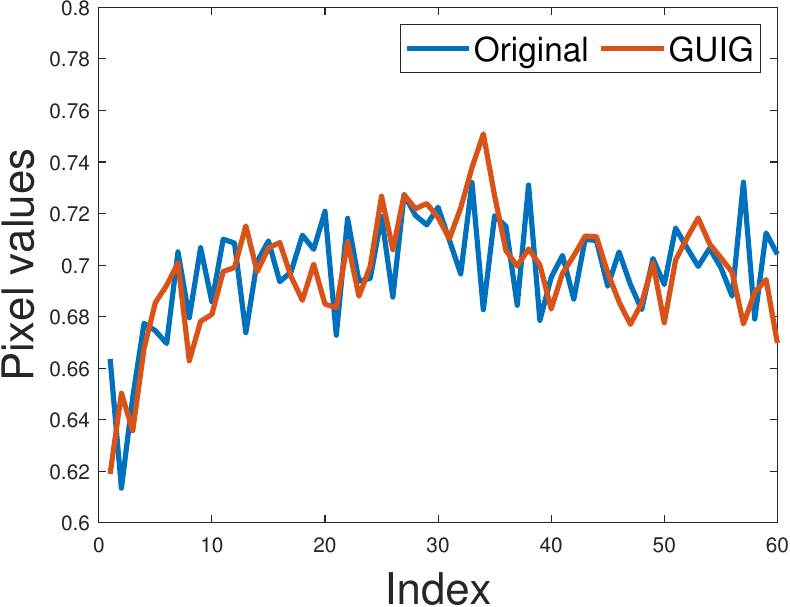}\vspace{0pt}
			\includegraphics[width=\linewidth]{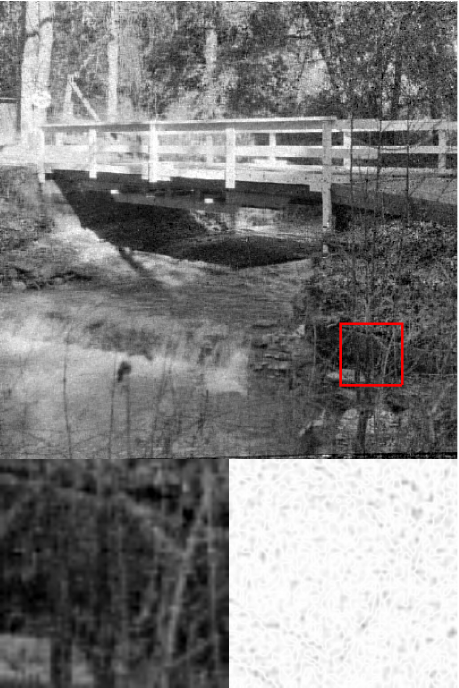}\vspace{0pt}
			\includegraphics[width=\linewidth]{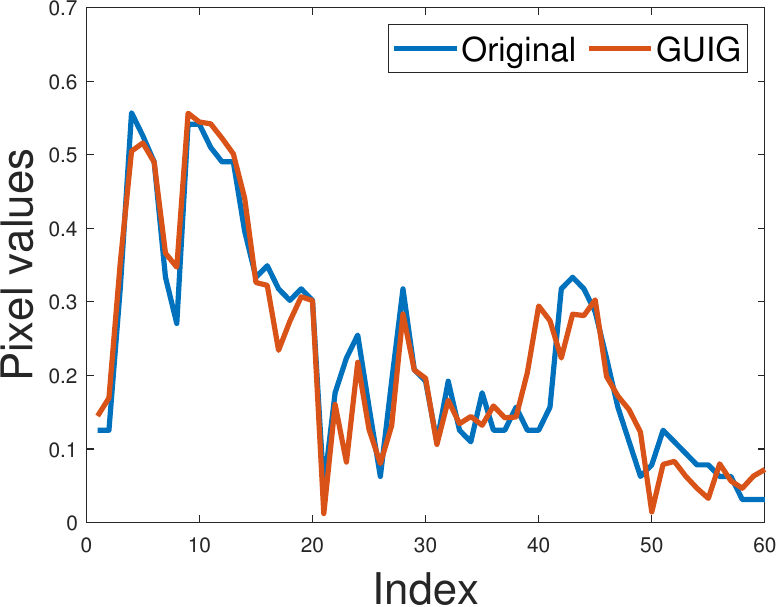}\vspace{0pt}
			\includegraphics[width=\linewidth]{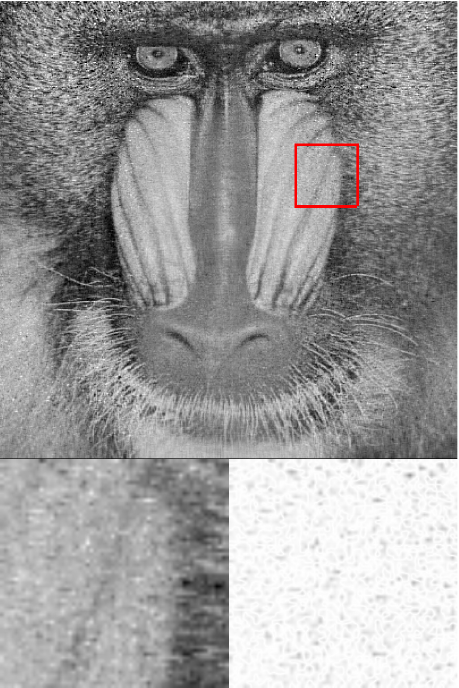}\vspace{0pt}
			\includegraphics[width=\linewidth]{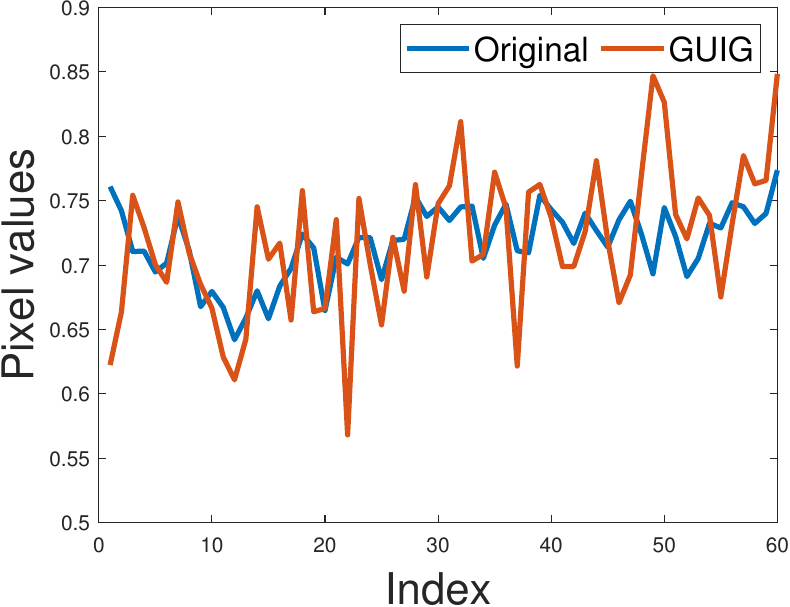}
			\caption*{GUIG}
		\end{subfigure}	
		\begin{subfigure}[b]{0.138\linewidth}
			\centering			
			\includegraphics[width=\linewidth]{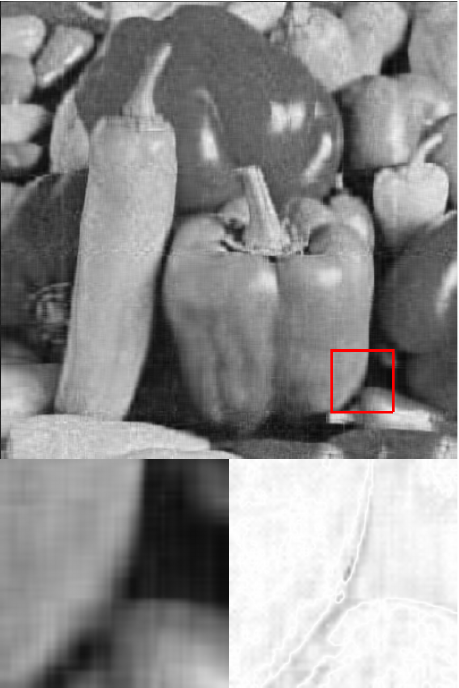}\vspace{0pt}
			\includegraphics[width=\linewidth]{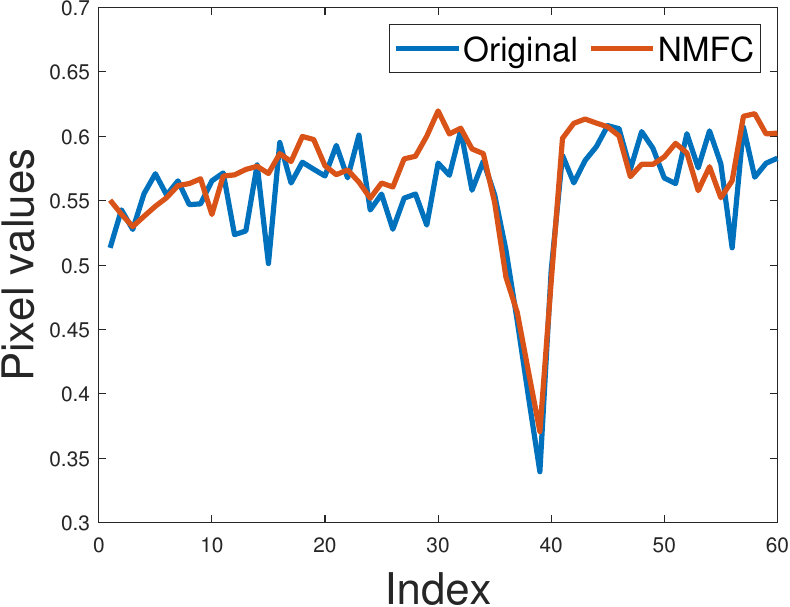}\vspace{0pt}
			\includegraphics[width=\linewidth]{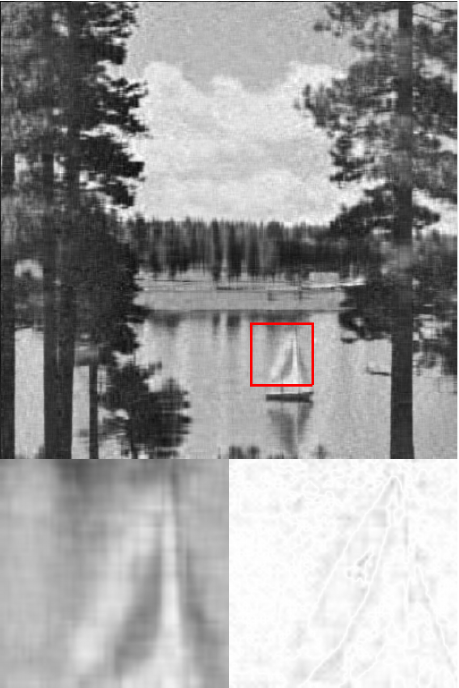}\vspace{0pt}
			\includegraphics[width=\linewidth]{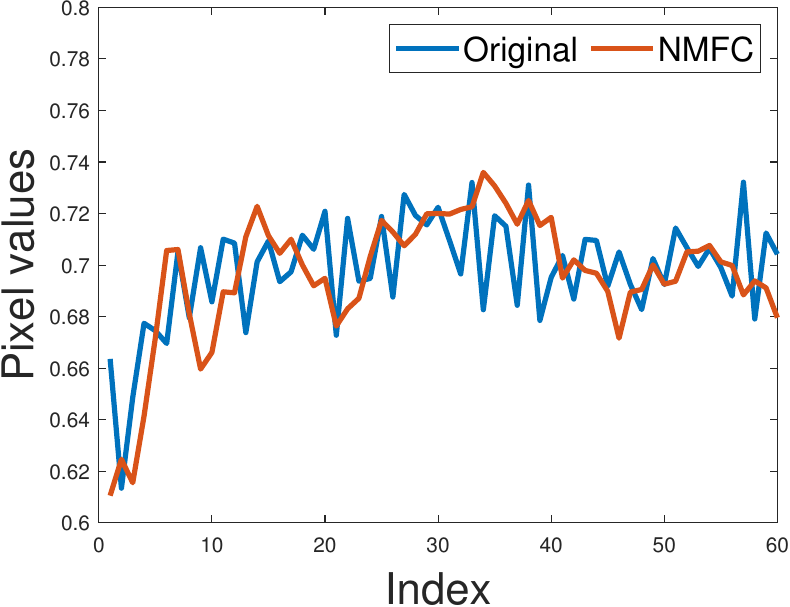}\vspace{0pt}
			\includegraphics[width=\linewidth]{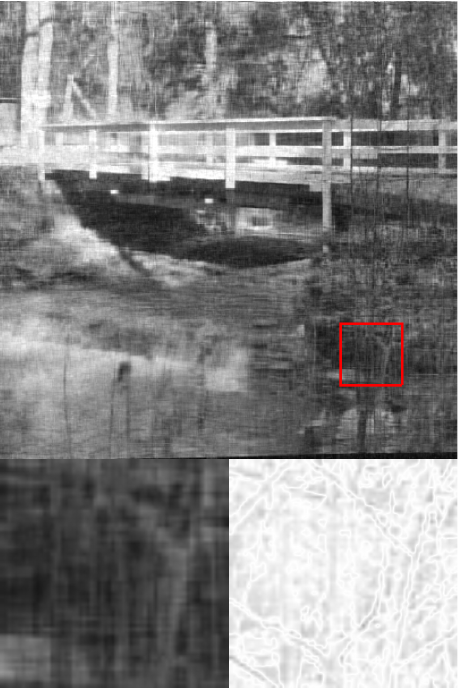}\vspace{0pt}
			\includegraphics[width=\linewidth]{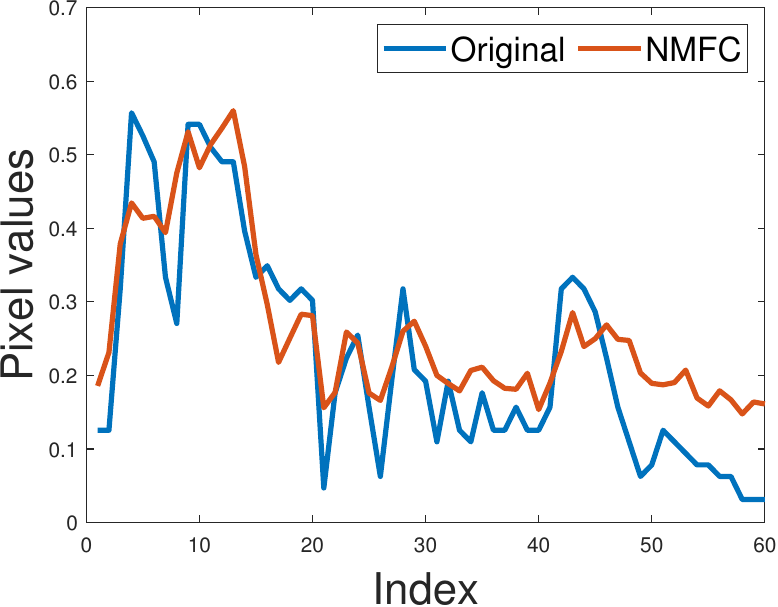}\vspace{0pt}
			\includegraphics[width=\linewidth]{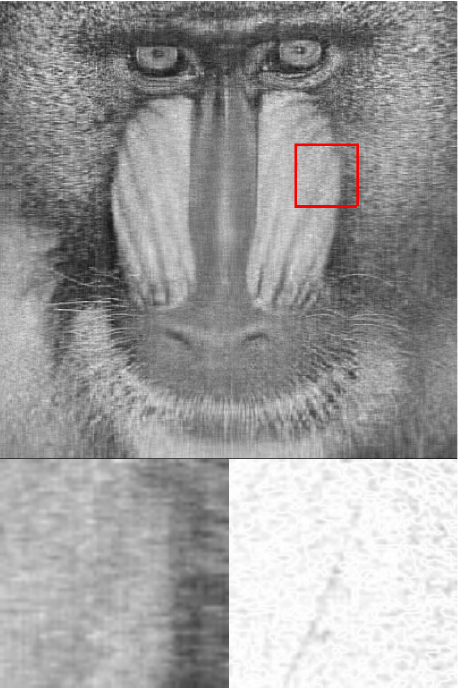}\vspace{0pt}
			\includegraphics[width=\linewidth]{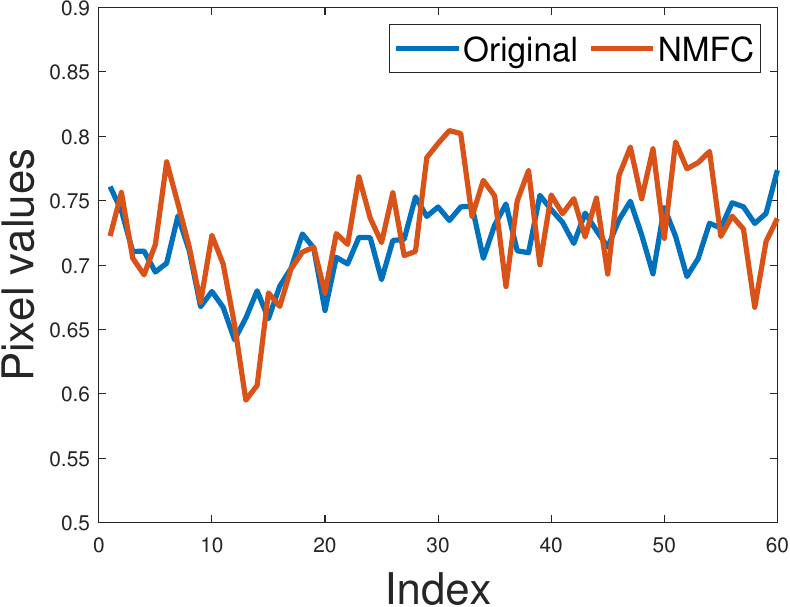}
			\caption*{NMFC}
		\end{subfigure}
			\begin{subfigure}[b]{0.138\linewidth}
		\centering			
		\includegraphics[width=\linewidth]{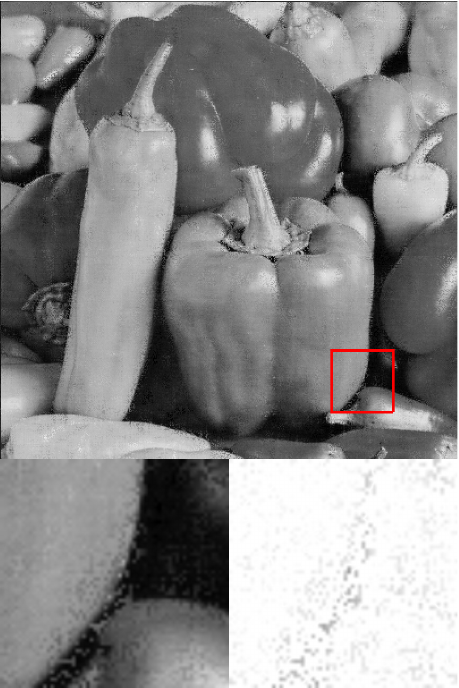}\vspace{0pt}
		\includegraphics[width=\linewidth]{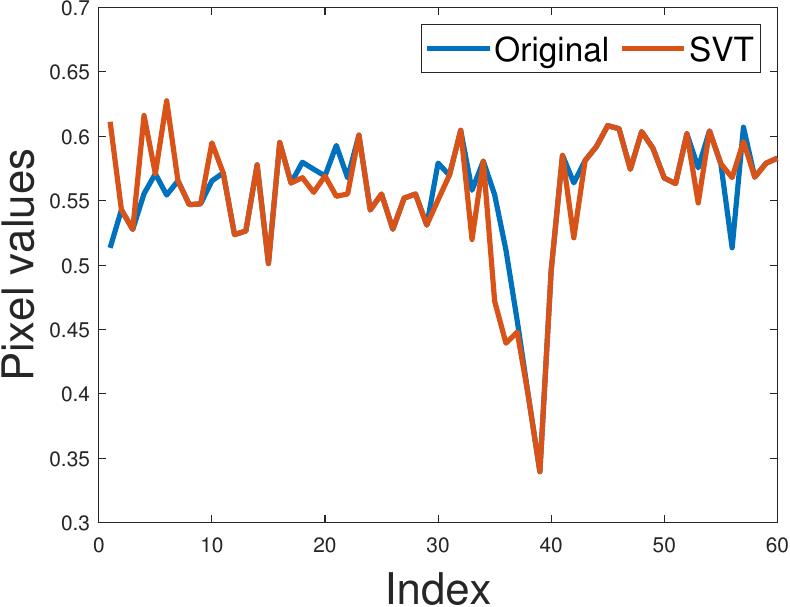}\vspace{0pt}
		\includegraphics[width=\linewidth]{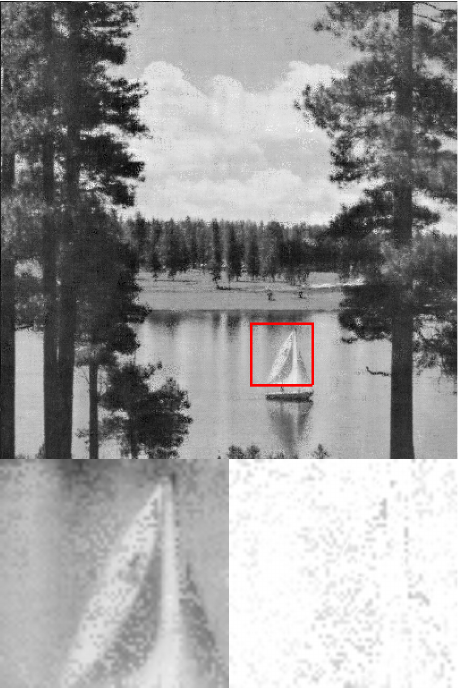}\vspace{0pt}
		\includegraphics[width=\linewidth]{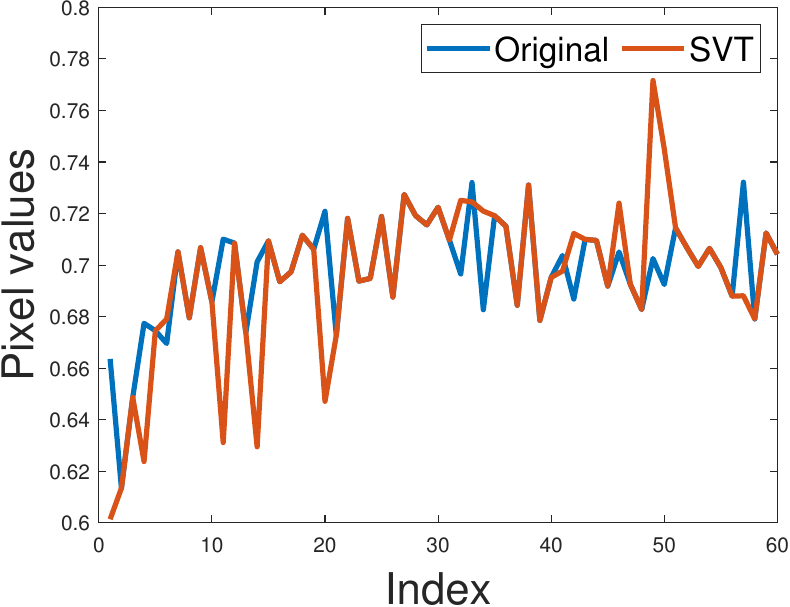}\vspace{0pt}
		\includegraphics[width=\linewidth]{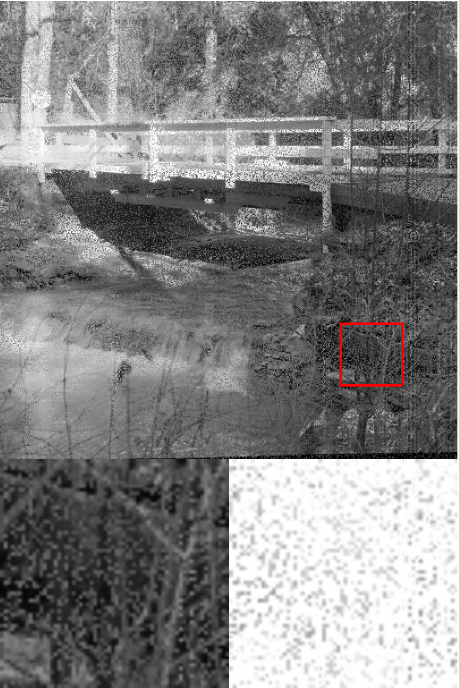}\vspace{0pt}
		\includegraphics[width=\linewidth]{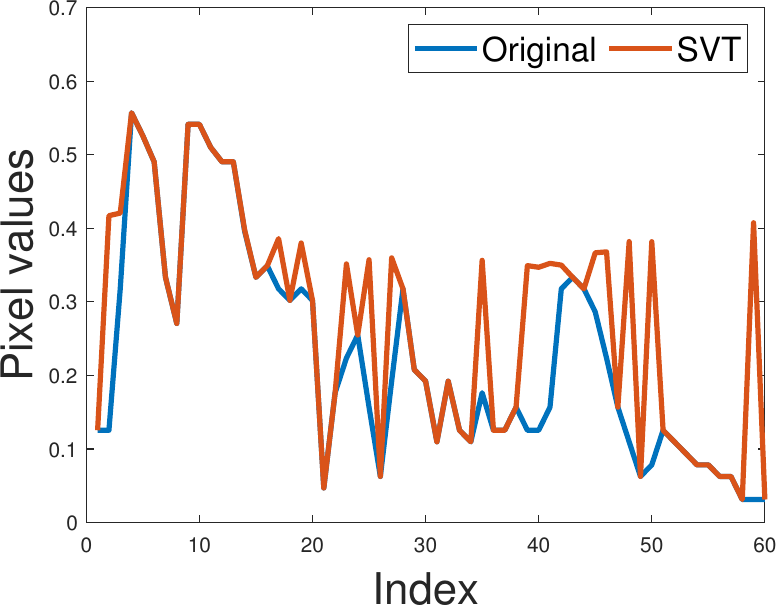}\vspace{0pt}
		\includegraphics[width=\linewidth]{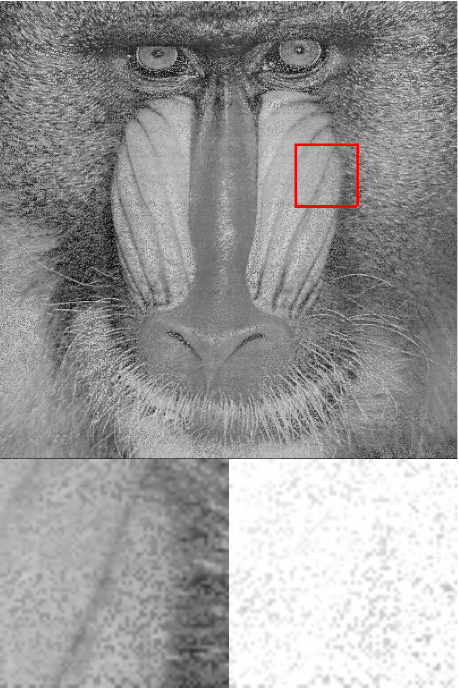}\vspace{0pt}
		\includegraphics[width=\linewidth]{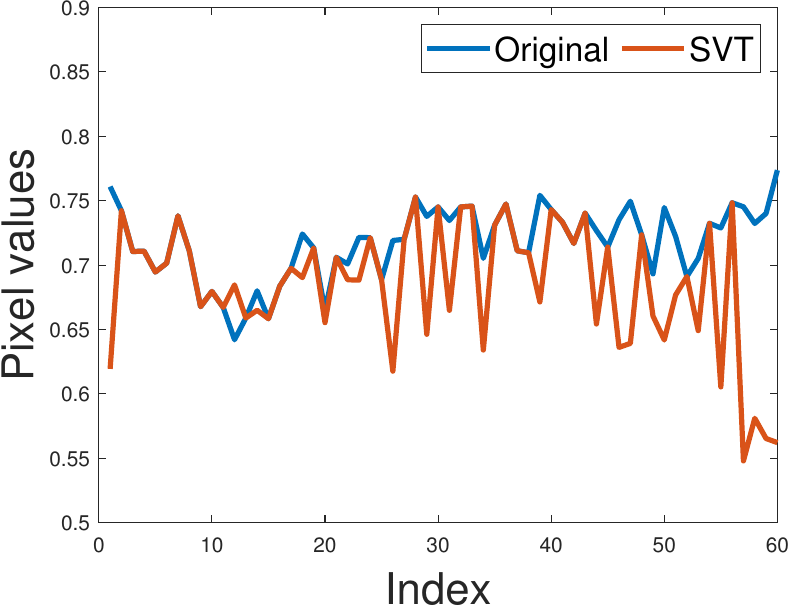}
		\caption*{SVT}
	\end{subfigure}						
	\end{subfigure}
	\vfill
	\caption{Examples of grayscale image inpainting. From top to bottom are respectively corresponding to ``Peppers", ``Sailboat", ``Bridge" and ``Mandrill". For better visualization, we show the zoom-in region and the corresponding partial residuals of the region. Under each image, we show enlargements of a demarcated patch and the corresponding error map (difference from the Original). Error maps with less color information indicate better restoration performance.}
	\label{fig:grayscaleimage}
\end{figure*}

\begin{figure*}[htbp]
	\centering
	\begin{subfigure}[b]{1\linewidth}
		\centering
		\includegraphics[width=\linewidth]{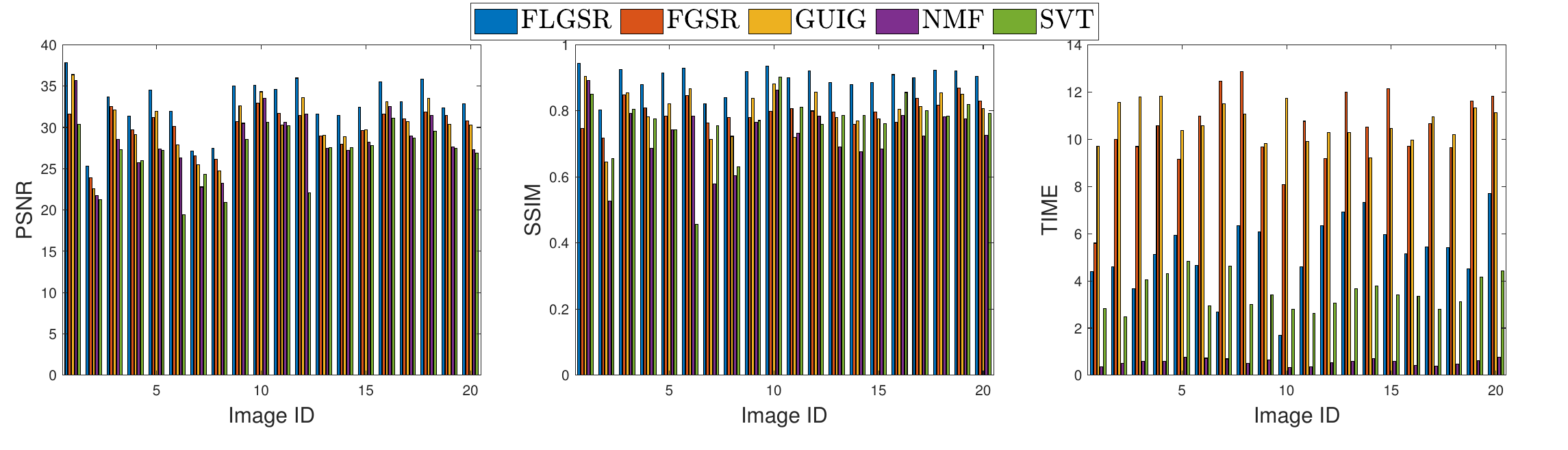}
	\end{subfigure}
	\vfill
	\caption{Comparison of the PSNR, SSIM and the running time on the randomly selected 20 images.}
	\label{fig:Box}
\end{figure*}

\subsection{High altitude aerial image inpainting}
In this subsection, we test high altitude aerial (HAA) image set of size $ 512 \times 512 $ pixels. The sampling rate $ SR $ is set to $ 70\% $. Table \ref{fig:HAAimage} summarizes the PSNR, SSIM values and the corresponding running times of the compared algorithms. The highest PSNR and SSIM results are shown in bold. As observed, FLGSR consistently achieves the highest values in terms of all evaluation indexes, e.g., the proposed method achieves an approximately 1.66 dB gain in PSNR over the respective second best methods on each image. Figure \ref{fig:HAAimage} shows a visualized comparison of the recovery images. As can be seen, FLGSR, GUIG and FGSR, which all rely on group sparsity, generate the best visual results. In addition, it can be seen that NMF and SVT still contain a certain amount of noise. The high altitude aerial image inpainting results are also consistent with the grayscale image inpainting results and all these demonstrate that our FLGSR results are much better than other methods, both in visual quality and in terms of PSNR, and SSIM.

\begin{figure*}[htbp]
	\centering
	\begin{subfigure}[b]{1\linewidth}
		\begin{subfigure}[b]{0.138\linewidth}
			\centering
			\includegraphics[width=\linewidth]{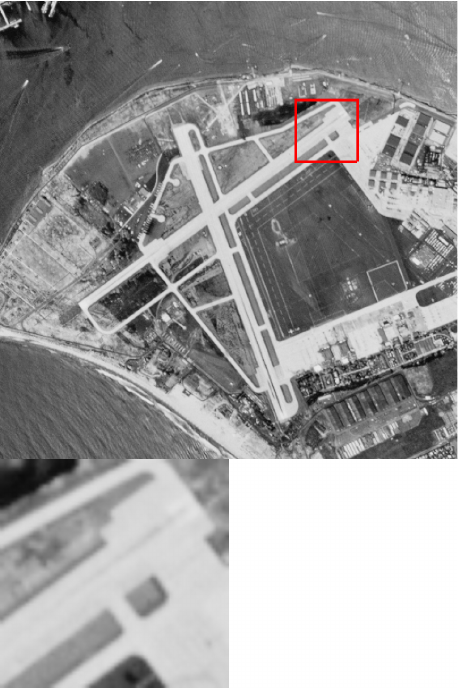}\vspace{0pt}
			\includegraphics[width=\linewidth]{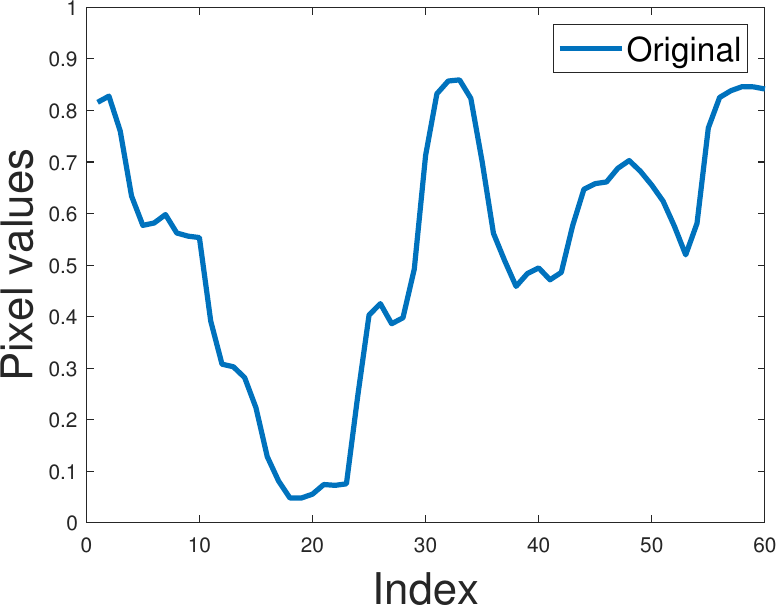}\vspace{0pt}
			\includegraphics[width=\linewidth]{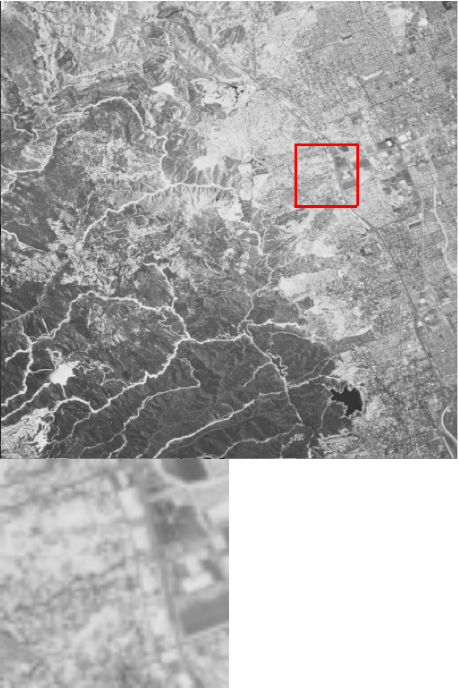}\vspace{0pt}
			\includegraphics[width=\linewidth]{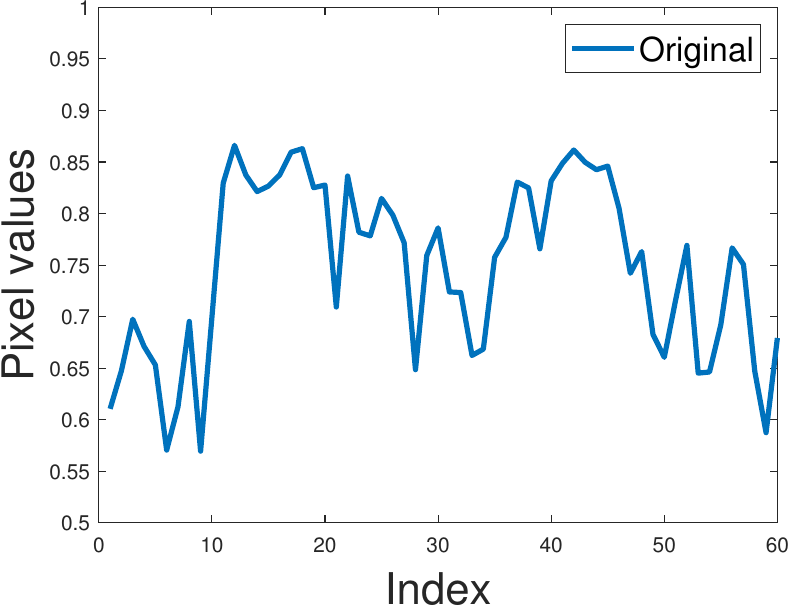}
			\caption*{Original}
		\end{subfigure}   	
		\begin{subfigure}[b]{0.138\linewidth}
			\centering
			\includegraphics[width=\linewidth]{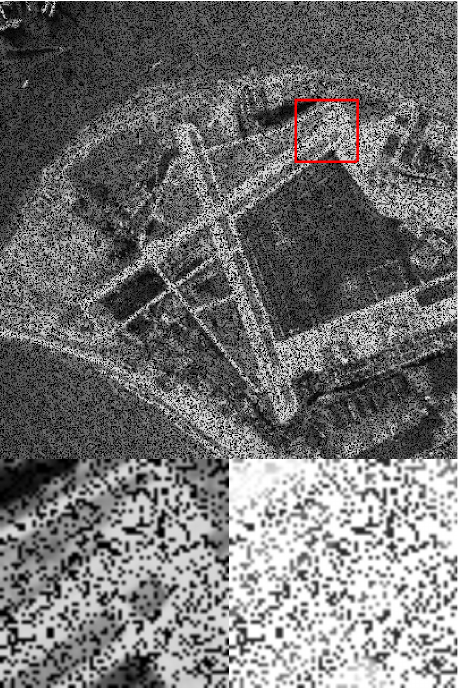}\vspace{0pt}
			\includegraphics[width=\linewidth]{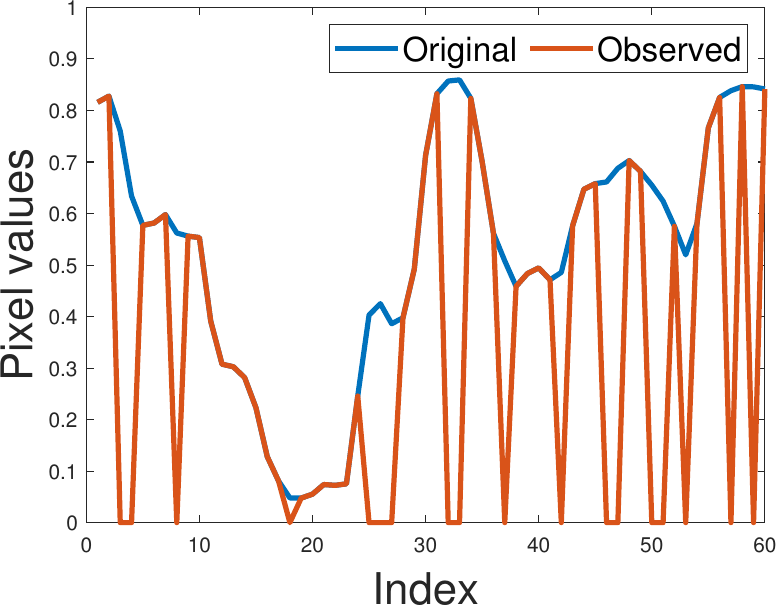}\vspace{0pt}
			\includegraphics[width=\linewidth]{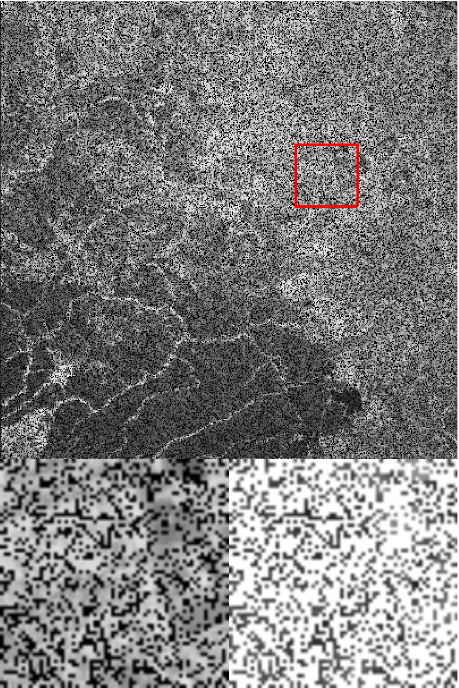}\vspace{0pt}
			\includegraphics[width=\linewidth]{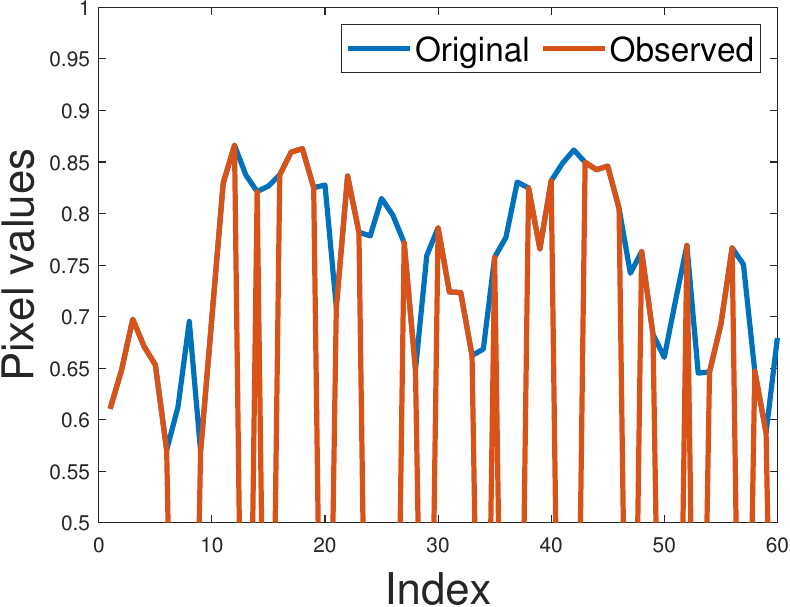}
			\caption*{Observed}
		\end{subfigure}
		\begin{subfigure}[b]{0.138\linewidth}
			\centering
			\includegraphics[width=\linewidth]{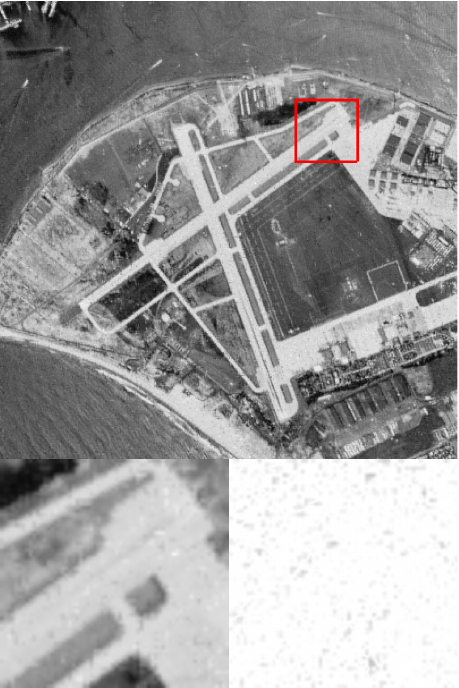}\vspace{0pt}
			\includegraphics[width=\linewidth]{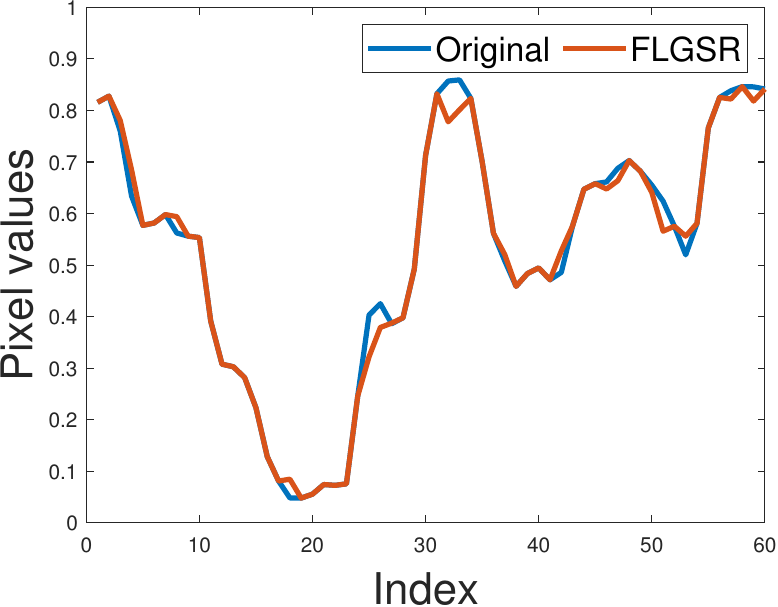}\vspace{0pt}
			\includegraphics[width=\linewidth]{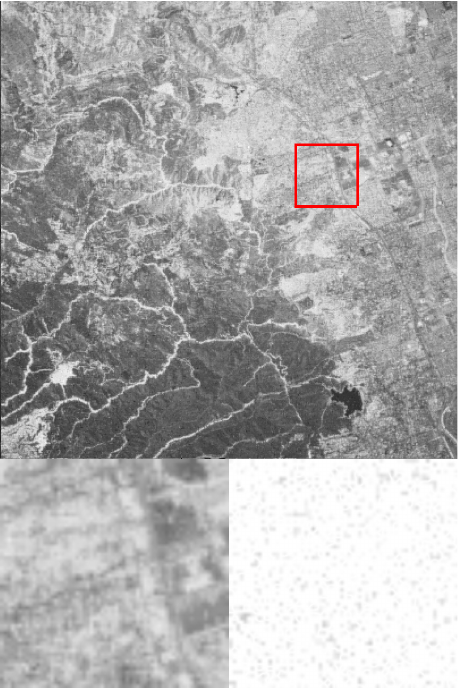}\vspace{0pt}
			\includegraphics[width=\linewidth]{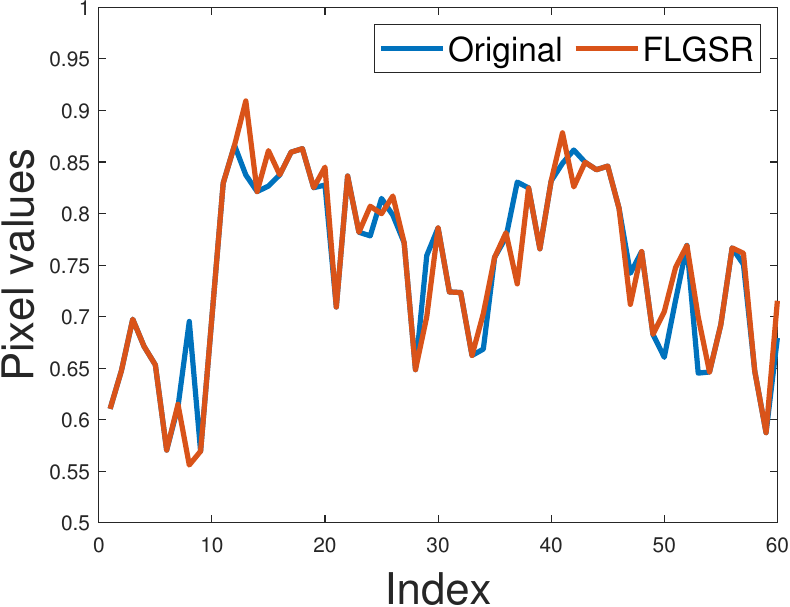}
			\caption*{FLGSR}
		\end{subfigure}
		\begin{subfigure}[b]{0.138\linewidth}
			\centering		
			\includegraphics[width=\linewidth]{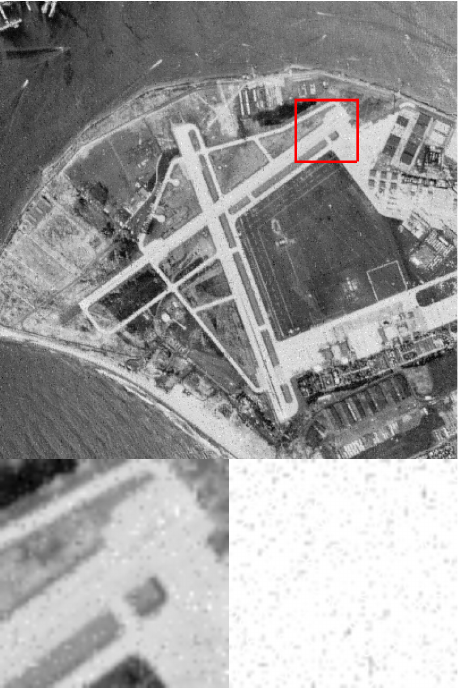}\vspace{0pt}
			\includegraphics[width=\linewidth]{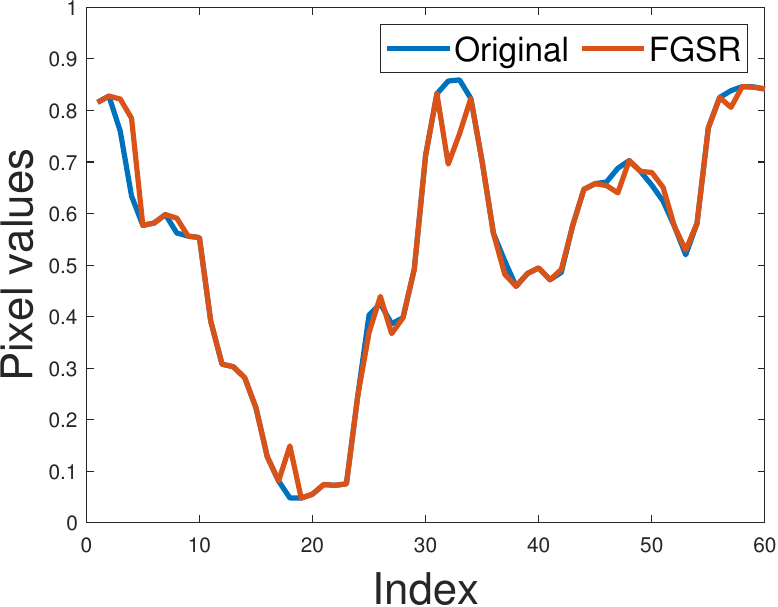}\vspace{0pt}
			\includegraphics[width=\linewidth]{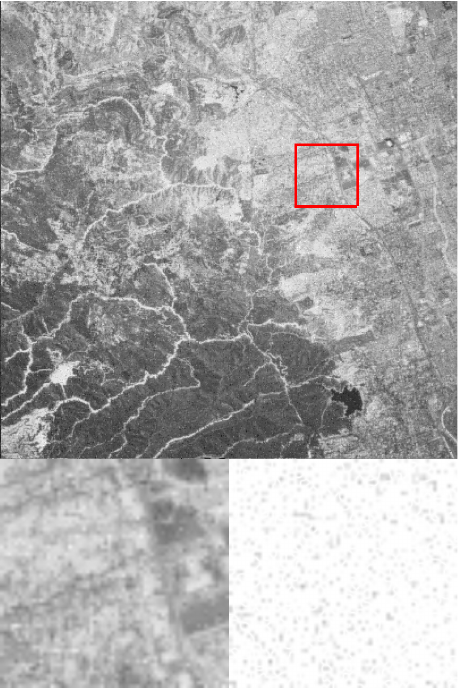}\vspace{0pt}
			\includegraphics[width=\linewidth]{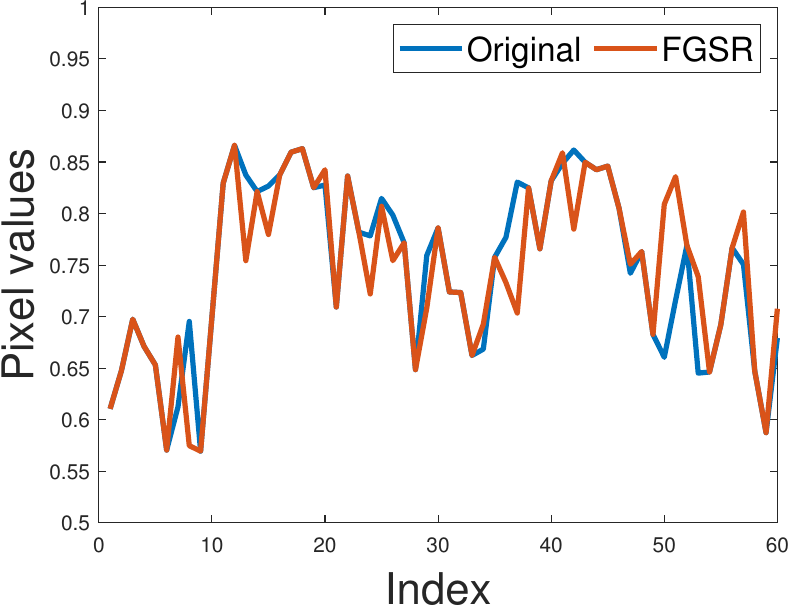}
			\caption*{FGSR}
		\end{subfigure}
		\begin{subfigure}[b]{0.138\linewidth}
			\centering
			\includegraphics[width=\linewidth]{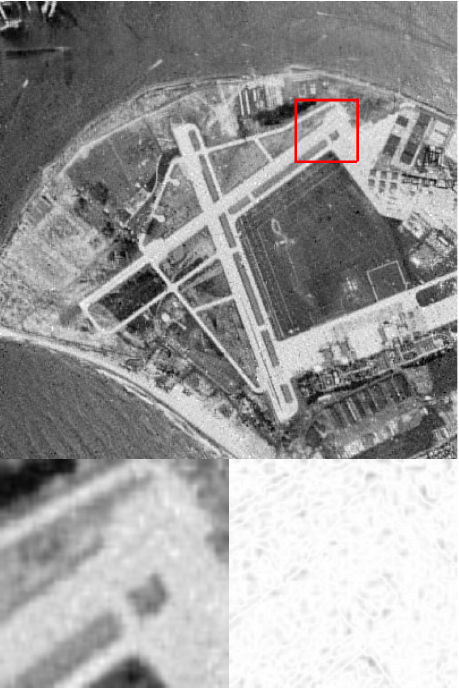}\vspace{0pt}
			\includegraphics[width=\linewidth]{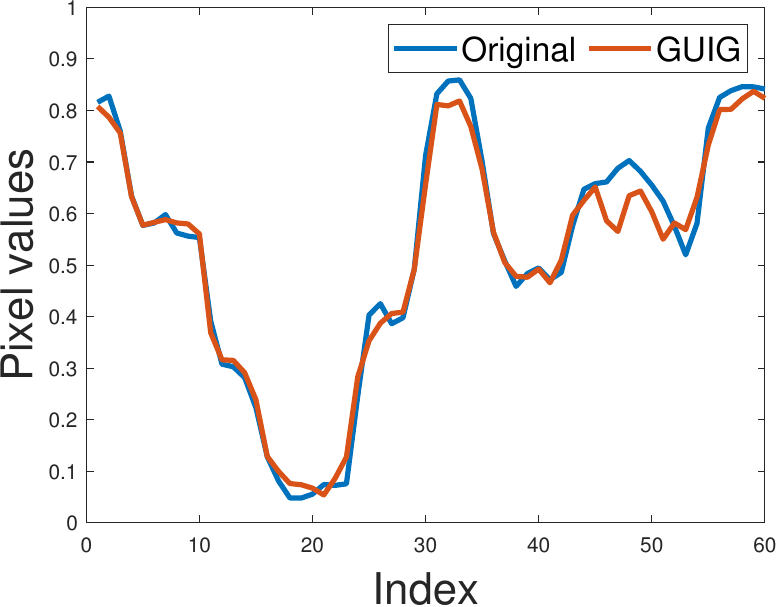}\vspace{0pt}
			\includegraphics[width=\linewidth]{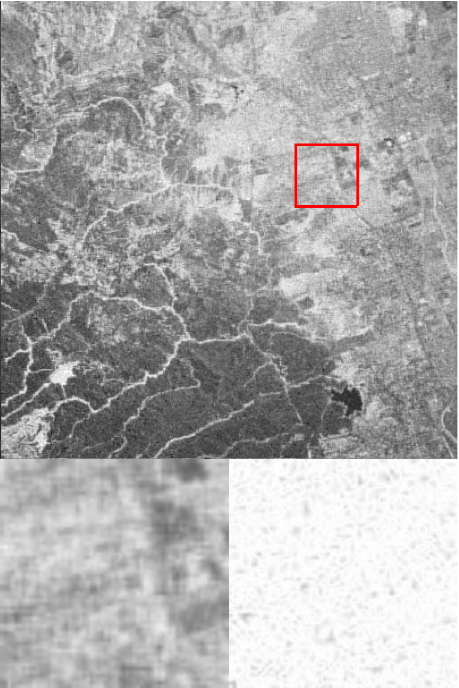}\vspace{0pt}
			\includegraphics[width=\linewidth]{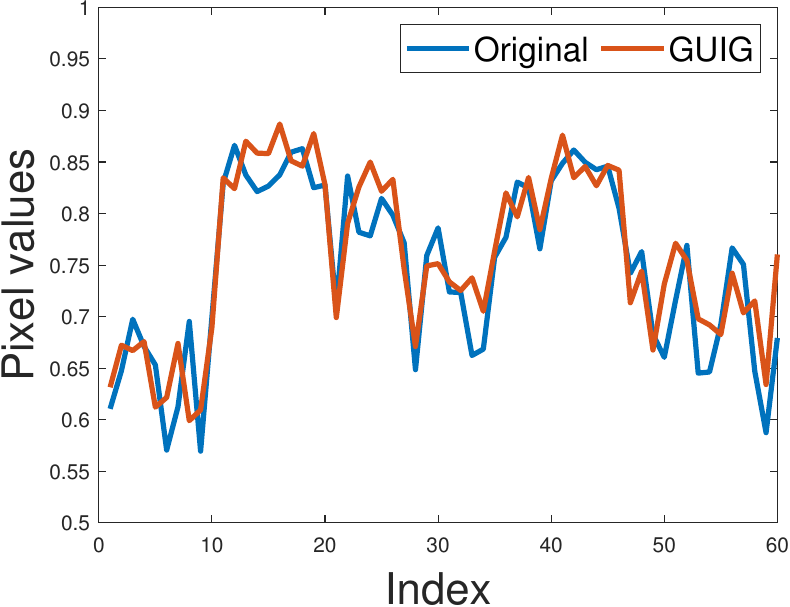}
			\caption*{GUIG}
		\end{subfigure}	
		\begin{subfigure}[b]{0.138\linewidth}
			\centering			
			\includegraphics[width=\linewidth]{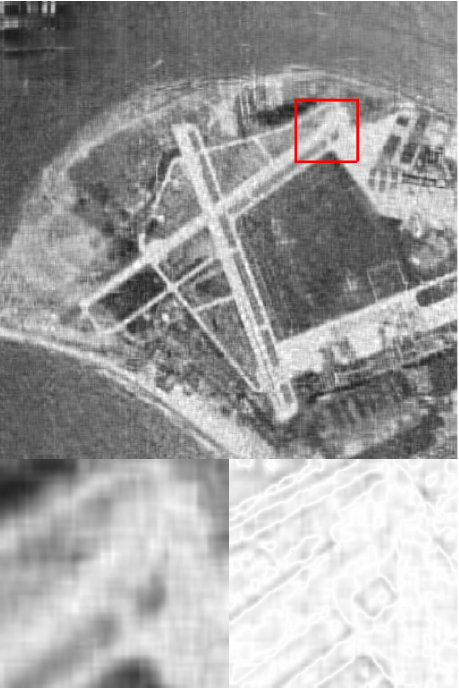}\vspace{0pt}
			\includegraphics[width=\linewidth]{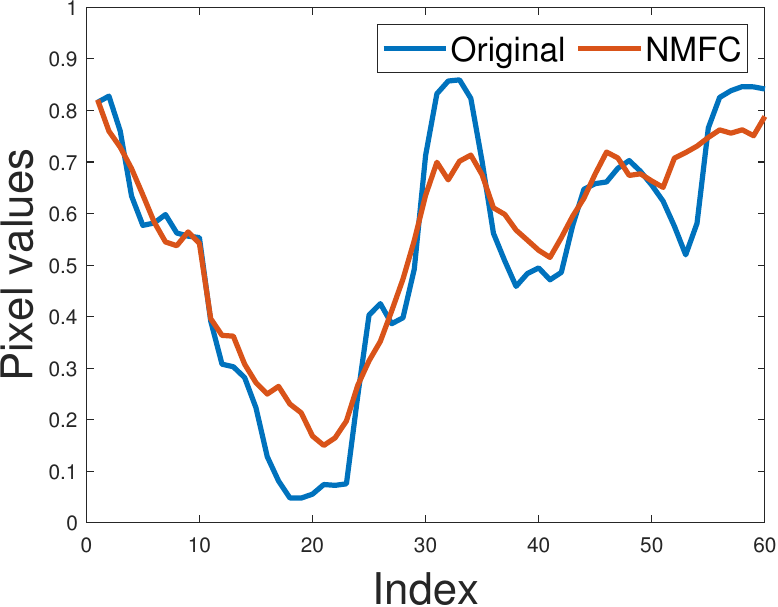}\vspace{0pt}
			\includegraphics[width=\linewidth]{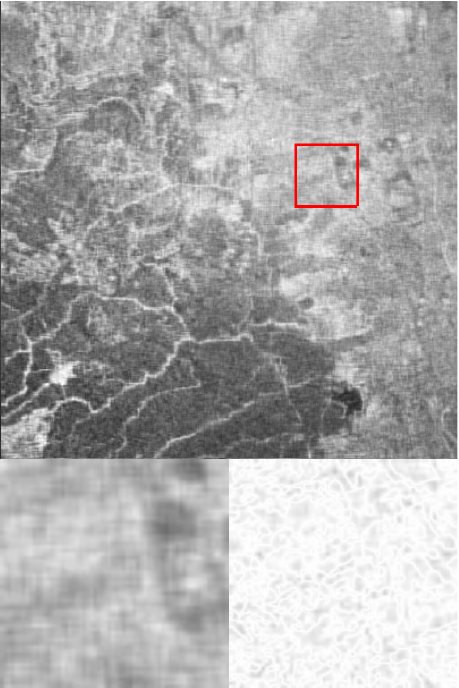}\vspace{0pt}
			\includegraphics[width=\linewidth]{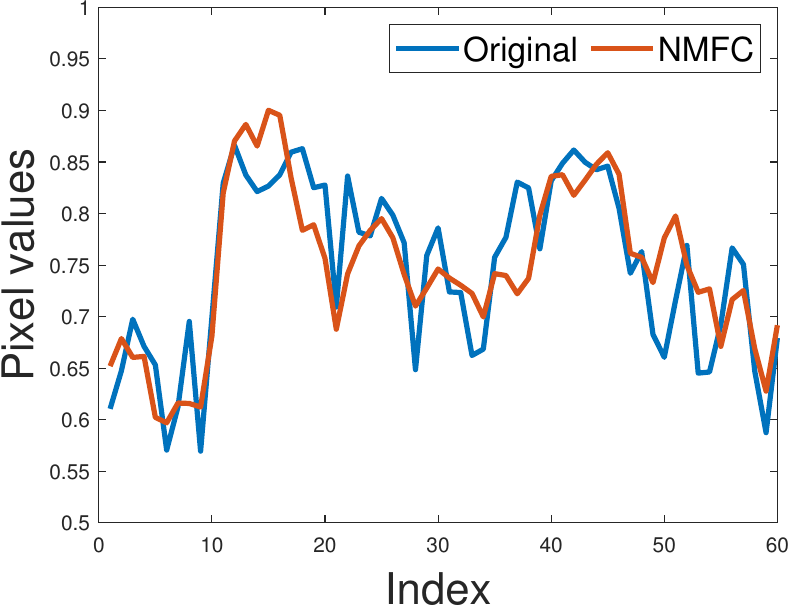}
			\caption*{NMFC}
		\end{subfigure}
		\begin{subfigure}[b]{0.138\linewidth}
			\centering			
			\includegraphics[width=\linewidth]{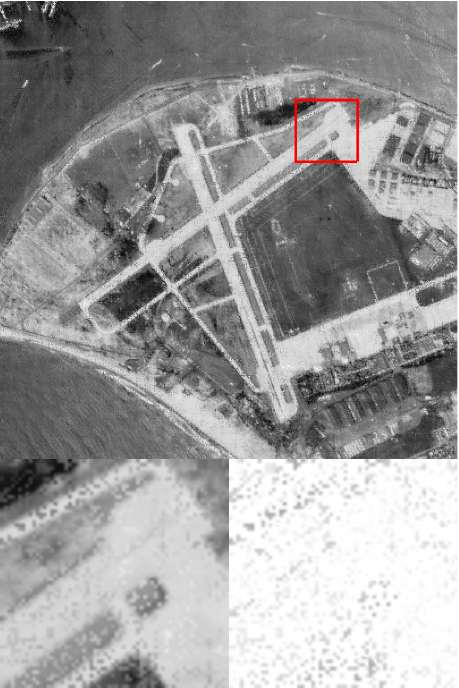}\vspace{0pt}
			\includegraphics[width=\linewidth]{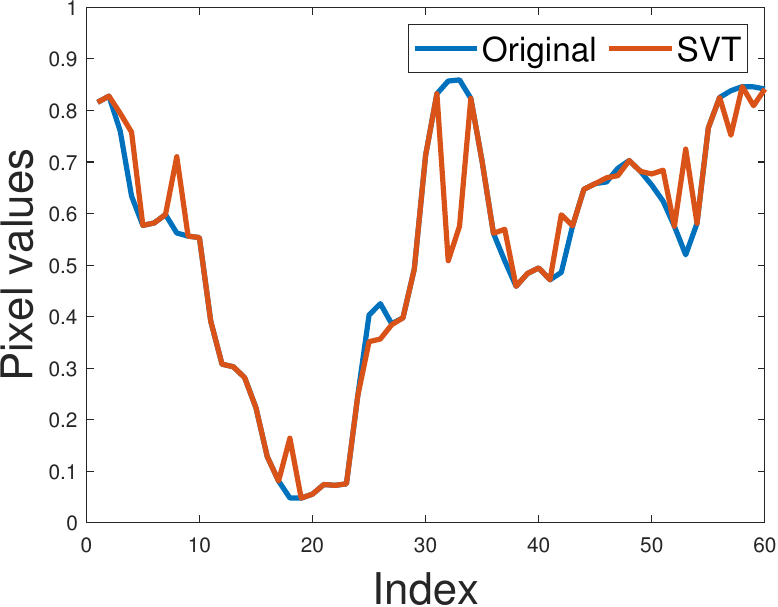}\vspace{0pt}
			\includegraphics[width=\linewidth]{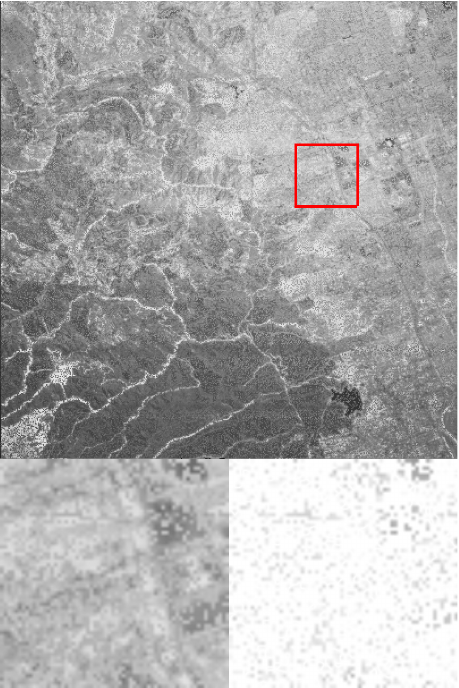}\vspace{0pt}
			\includegraphics[width=\linewidth]{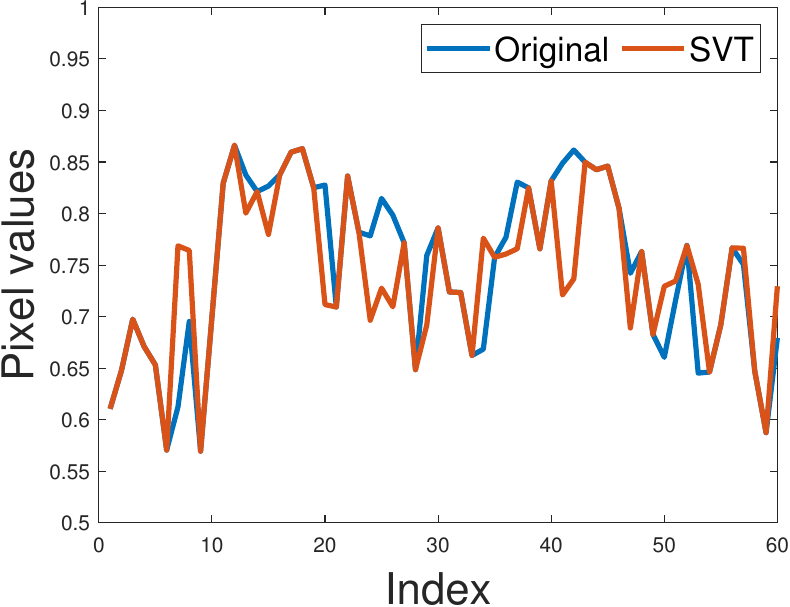}
			\caption*{SVT}
		\end{subfigure}						
	\end{subfigure}
	\vfill
	\caption{Examples of HAA image inpainting. From top to bottom are respectively corresponding to ``San Diego" and ``Woodland Hills". For better visualization, we show the zoom-in region and the corresponding partial residuals of the region. Under each image, we show enlargements of a demarcated patch and the corresponding error map (difference from the Original). Error maps with less color information indicate better restoration performance.}
	\label{fig:HAAimage}
\end{figure*}

\begin{table}[htbp]
	\centering
	\caption{HAA image inpainting performance comparison: PSNR, SSIM and running time}
	\begin{tabular}{ccccccc}
		\toprule
		Image & Index & FLGSR & FGSR  & GUIG  & NMFC  & SVT \\
		\midrule
		\multirow{3}{*}{San Diego} & PSNR  & \textbf{29.843 } & 28.182  & 27.113  & 22.539  & 25.133  \\
		& SSIM  & \textbf{0.866 } & 0.794  & 0.743  & 0.585  & 0.768  \\
		& TIME  & 7.578  & 10.651  & 17.139  & 0.358  & 4.356  \\
		\midrule
		\multirow{3}{*}{Woodland Hills} & PSNR  & \textbf{28.819 } & 26.873  & 26.143  & 24.330  & 23.287  \\
		& SSIM  & \textbf{0.857 } & 0.785  & 0.730  & 0.591  & 0.682  \\
		& TIME  & 6.521  & 10.159  & 11.581  & 0.387  & 2.555  \\
		\bottomrule
	\end{tabular}%
	\label{tab:HAAimage}%
\end{table}%

\section{Conclusions}
 In this paper, we have proposed a new group sparsity approach to the LRMR problem, which can recover low rank matrices from incomplete observations. Specifically, we have introduced the FLGSR, a novel regularizer that can group multiple columns of a matrix as a unit based on the data structure. By doing so, we have proved the equivalence between the matrix rank and the FLGSR, and shown that the LRMR problem with either of them has the same global minimizers. Furthermore, we have also established the equivalence between the relaxed and the penalty formulations of the LRMR problem with FLGSR. To optimize this model, we have devised an efficient algorithm to solve the LRMR problem with FLGSR, and analyzed its convergence properties. Finally, we have demonstrated the superiority of our method over state-of-the-art methods in terms of recovery accuracy, visual quality and computational efficiency on both grayscale images and high altitude aerial images.

\bigskip
\noindent
{\bf Acknowledgement}
Xinzhen Zhang was partly supported by the National Natural Science Foundation of China
(Grant No. 11871369). Minru Bai was partly supported by the National Natural Science Foundation of China (Grant No. 11971159, 12071399) and the Hunan Provincial Key Laboratory of Intelligent Information Processing and Applied Mathematics.

\bibliographystyle{plain}
\bibliography{MatrixGroup}

\begin{thebibliography}{10}

\bibitem{ABEV06}
Jacob~D. Abernethy, Francis~R. Bach, Theodoros Evgeniou, and Jean{-}Philippe
  Vert.
\newblock Low-rank matrix factorization with attributes.
\newblock {\em CoRR}, abs/cs/0611124, 2006.

\bibitem{AFSU07}
Yonatan Amit, Michael Fink, Nathan Srebro, and Shimon Ullman.
\newblock Uncovering shared structures in multiclass classification.
\newblock In {\em Proceedings of the 24th international conference on Machine
  learning}. {ACM} Press, 2007.

\bibitem{Bro12}
Andrew Browder.
\newblock {\em Mathematical Analysis}.
\newblock Undergraduate Texts in Mathematics. Springer New York, NY, 2012.

\bibitem{ClTCB15}
Ricardo Cabral, Fernando~De la~Torre, Joao~Paulo Costeira, and Alexandre
  Bernardino.
\newblock Matrix completion for weakly-supervised multi-label image
  classification.
\newblock {\em {IEEE} Transactions on Pattern Analysis and Machine
  Intelligence}, 37(1):121--135, jan 2015.

\bibitem{CCS10}
Jian-Feng Cai, Emmanuel~J. Cand{\`{e}}s, and Zuowei Shen.
\newblock A singular value thresholding algorithm for matrix completion.
\newblock {\em {SIAM} Journal on Optimization}, 20(4):1956--1982, jan 2010.

\bibitem{CR09}
Emmanuel~J. Cand{\`{e}}s and Benjamin Recht.
\newblock Exact matrix completion via convex optimization.
\newblock {\em Foundations of Computational Mathematics}, 9(6):717--772, apr
  2009.

\bibitem{CT10}
Emmanuel~J. Candes and Terence Tao.
\newblock The power of convex relaxation: Near-optimal matrix completion.
\newblock {\em {IEEE} Transactions on Information Theory}, 56(5):2053--2080,
  may 2010.

\bibitem{CGLY17}
Xiaojun Chen, Lei Guo, Zhaosong Lu, and Jane~J. Ye.
\newblock An augmented lagrangian method for non-lipschitz nonconvex
  programming.
\newblock {\em {SIAM} Journal on Numerical Analysis}, 55(1):168--193, jan 2017.

\bibitem{CLP16}
Xiaojun Chen, Zhaosong Lu, and Ting~Kei Pong.
\newblock Penalty methods for a class of non-lipschitz optimization problems.
\newblock {\em SIAM Journal on Optimization}, 26(3):1465--1492, January 2016.

\bibitem{DXG18}
Jing Dong, Zhichao Xue, Jian Guan, Zi-Fa Han, and Wenwu Wang.
\newblock Low rank matrix completion using truncated nuclear norm and sparse
  regularizer.
\newblock {\em Signal Processing: Image Communication}, 68:76--87, oct 2018.

\bibitem{EvdH12}
A.~Eriksson and A.~van~den Hengel.
\newblock Efficient computation of robust weighted low-rank matrix
  approximations using the l{\_}1 norm.
\newblock {\em {IEEE} Transactions on Pattern Analysis and Machine
  Intelligence}, 34(9):1681--1690, sep 2012.

\bibitem{FDCU19}
Jicong Fan, Lijun Ding, Yudong Chen, and Madeleine Udell.
\newblock Factor group-sparse regularization for efficient low-rank matrix
  recovery.
\newblock In {\em Proceedings of the 33rd International Conference on Neural
  Information Processing Systems}, NIPS'19, pages 5104--5114, Red Hook, NY,
  USA, 2019. Curran Associates Inc.

\bibitem{FHB04}
M.~Fazel, H.~Hindi, and S.~Boyd.
\newblock Rank minimization and applications in system theory.
\newblock In {\em Proceedings of the 2004 American Control Conference}. {IEEE},
  2004.

\bibitem{FHB01}
M.~Fazel, H.~Hindi, and S.P. Boyd.
\newblock A rank minimization heuristic with application to minimum order
  system approximation.
\newblock In {\em Proceedings of the 2001 American Control Conference.} {IEEE},
  2001.

\bibitem{JFWZ21}
Xixi Jia, Xiangchu Feng, Weiwei Wang, and Lei Zhang.
\newblock Generalized unitarily invariant gauge regularization for fast
  low-rank matrix recovery.
\newblock {\em {IEEE} Transactions on Neural Networks and Learning Systems},
  32(4):1627--1641, apr 2021.

\bibitem{KMO10}
Raghunandan~H. Keshavan, Andrea Montanari, and Sewoong Oh.
\newblock Matrix completion from a few entries.
\newblock {\em {IEEE} Transactions on Information Theory}, 56(6):2980--2998,
  jun 2010.

\bibitem{Kom}
N.~Komodakis.
\newblock Image completion using global optimization.
\newblock In {\em 2006 {IEEE} Computer Society Conference on Computer Vision
  and Pattern Recognition - Volume 1}. {IEEE}.

\bibitem{LL16}
Chul Lee and Edmund~Y. Lam.
\newblock Computationally efficient truncated nuclear norm minimization for
  high dynamic range imaging.
\newblock {\em {IEEE} Transactions on Image Processing}, 25(9):4145--4157, sep
  2016.

\bibitem{Lew95}
Adrian~S. Lewis.
\newblock The convex analysis of unitarily invariant matrix functions.
\newblock {\em Journal of Convex Analysis}, 2(1):173--183, 1995.

\bibitem{LLC23}
Wei Liu, Xin Liu, and Xiaojun Chen.
\newblock An inexact augmented lagrangian algorithm for training leaky {{ReLU}}
  neural network with group sparsity.
\newblock {\em Journal of Machine Learning Research}, 24(212):1--43, 2023.

\bibitem{LLM19}
Ya-Feng Liu, Xin Liu, and Shiqian Ma.
\newblock On the nonergodic convergence rate of an inexact augmented lagrangian
  framework for composite convex programming.
\newblock {\em Mathematics of Operations Research}, 44(2):632--650, May 2019.

\bibitem{LZ12}
Zhaosong Lu and Yong Zhang.
\newblock An augmented lagrangian approach for sparse principal component
  analysis.
\newblock {\em Mathematical Programming}, 135(1):149--193, apr 2012.

\bibitem{LL94}
Xiao-Dong Luo and Zhi-Quan Luo.
\newblock Extension of hoffman's error bound to polynomial systems.
\newblock {\em SIAM Journal on Optimization}, 4(2):383--392, May 1994.

\bibitem{LLTX15}
Yong Luo, Tongliang Liu, Dacheng Tao, and Chao Xu.
\newblock Multiview matrix completion for multilabel image classification.
\newblock {\em {IEEE} Transactions on Image Processing}, 24(8):2355--2368, aug
  2015.

\bibitem{MGC11}
Shiqian Ma, Donald Goldfarb, and Lifeng Chen.
\newblock Fixed point and bregman iterative methods for matrix rank
  minimization.
\newblock {\em Mathematical Programming}, 128(1-2):321--353, sep 2011.

\bibitem{MLH17}
Tian-Hui Ma, Yifei Lou, and Ting-Zhu Huang.
\newblock Truncated $l_{1-2}$ models for sparse recovery and rank minimization.
\newblock {\em {SIAM} Journal on Imaging Sciences}, 10(3):1346--1380, jan 2017.

\bibitem{MS12}
G.~Marjanovic and V.~Solo.
\newblock On $l_q$ optimization and matrix completion.
\newblock {\em {IEEE} Transactions on Signal Processing}, 60(11):5714--5724,
  nov 2012.

\bibitem{NWC12}
Feiping Nie, Hua Wang, Xiao Cai, Heng Huang, and Chris Ding.
\newblock Robust matrix completion via joint schatten p-norm and lp-norm
  minimization.
\newblock In {\em 2012 {IEEE} 12th International Conference on Data Mining}.
  {IEEE}, dec 2012.

\bibitem{PC21}
Lili Pan and Xiaojun Chen.
\newblock Group sparse optimization for images recovery using capped folded
  concave functions.
\newblock {\em {SIAM} Journal on Imaging Sciences}, 14(1):1--25, jan 2021.

\bibitem{RFP10}
Benjamin Recht, Maryam Fazel, and Pablo~A. Parrilo.
\newblock Guaranteed minimum-rank solutions of linear matrix equations via
  nuclear norm minimization.
\newblock {\em {SIAM} Review}, 52(3):471--501, jan 2010.

\bibitem{SWKT19}
Xinhua Su, Yilun Wang, Xuejing Kang, and Ran Tao.
\newblock Nonconvex truncated nuclear norm minimization based on adaptive
  bisection method.
\newblock {\em {IEEE} Transactions on Circuits and Systems for Video
  Technology}, 29(11):3159--3172, nov 2019.

\bibitem{WBSS04}
Z.~Wang, A.C. Bovik, H.R. Sheikh, and E.P. Simoncelli.
\newblock Image quality assessment: From error visibility to structural
  similarity.
\newblock {\em {IEEE} Transactions on Image Processing}, 13(4):600--612, apr
  2004.

\bibitem{WYZ12}
Zaiwen Wen, Wotao Yin, and Yin Zhang.
\newblock Solving a low-rank factorization model for matrix completion by a
  nonlinear successive over-relaxation algorithm.
\newblock {\em Mathematical Programming Computation}, 4(4):333--361, jul 2012.

\bibitem{XYWZ12}
Yangyang Xu, Wotao Yin, Zaiwen Wen, and Yin Zhang.
\newblock An alternating direction algorithm for matrix completion with
  nonnegative factors.
\newblock {\em Frontiers of Mathematics in China}, 7(2):365--384, apr 2012.

\bibitem{YKWL19}
Quanming Yao, James~T. Kwok, Taifeng Wang, and Tie-Yan Liu.
\newblock Large-scale low-rank matrix learning with nonconvex regularizers.
\newblock {\em {IEEE} Transactions on Pattern Analysis and Machine
  Intelligence}, 41(11):2628--2643, nov 2019.

\bibitem{YZ22}
Quan Yu and Xinzhen Zhang.
\newblock A smoothing proximal gradient algorithm for matrix rank minimization
  problem.
\newblock {\em Computational Optimization and Applications}, 81(2):519--538,
  January 2022.

\bibitem{ZZW22}
Xiaoqin Zhang, Jingjing Zheng, Di~Wang, Guiying Tang, Zhengyuan Zhou, and
  Zhouchen Lin.
\newblock Structured sparsity optimization with non-convex surrogates of
  $\ell_{2,0}$-norm: A unified algorithmic framework.
\newblock {\em {IEEE} Transactions on Pattern Analysis and Machine
  Intelligence}, pages 1--18, 2022.

\end{thebibliography}

\appendix

\section{Proof of Theorem \ref{Thm:rank}}\label{App-Thm:rank}
\begin{proof}
	On the one hand, from the definition of $ \left\|\cdot\right\|_{p,0} $, for any matrices $X:=\left[X_1,\ldots,X_s\right]\in\R^{m\times n}$ and $Y:=\left[Y_1,\ldots,Y_s\right]\in\R^{n\times n}$ that satisfy $C=XY^T$, we obtain that
	\begin{equation*}
		\begin{aligned}
			\rank\left(C\right)&\le \frac{1}{2}\left(\rank\left(X\right)+\rank\left(Y\right)\right)
			=\frac{1}{2}\rank\left(\left[X_1,\ldots,X_s\right]\right)+\frac{1}{2}\rank\left(\left[Y_1,\ldots,Y_s\right]\right)\\ 
			&\le \frac{1}{2}\sum_{i=1}^{s}\left(\rank\left(X_i\right)+\rank\left(Y_i\right)\right)
			\le \frac{1}{2}\sum_{i=1}^sn_i\left(\left\|X_i\right\|_p^0+\left\|Y_i\right\|_p^0\right)
			=\frac{1}{2}\left(\left\| X \right\|_{p,0}+\left\| Y \right\|_{p,0}\right). 
		\end{aligned}
	\end{equation*}
	On the other hand, it is clear that there exists two column full rank matrices $ {\bar X} \in \R^{m\times r} $ and $ {\bar Y} \in \R^{n\times r }$ such that $ C={\bar X}{\bar Y}^T $. By the given conditions, we know that there exist $ n_{i_1}, \ldots, n_{i_p} \in \left\lbrace n_1,\ldots, n_s \right\rbrace  $ such that $ \sum_{j=1}^{p} n_{i_j}=r $. Without loss of generality, we suppose $ n_{i_1}=n_1,\ldots,n_{i_p}=n_p $. Let $ \tilde{X}=[{\bar X},0] \in \R^{m\times n} $ and $ \tilde{Y}=[{\bar Y},0] \in \R^{n\times n} $, then $ C=\tilde{X}\tilde{Y}^T $ and $ \rank(\tilde{X})=\| \tilde{X} \|_{p,0}  $, $ \rank\left(\tilde{Y}\right)=\| \tilde{Y} \|_{p,0}  $.	
	
	From the above two aspects, we can conclude that
	$$
	\rank\left(C\right)=\mathop{\min} \limits_{C=XY^T} \frac{1}{2}\left(\left\| X \right\|_{p,0}+\left\| Y \right\|_{p,0}\right)=
	G_p^{\left|\cdot\right|^0}\left(C\right).
	$$
	The proof is completed.
\end{proof}

\section{Proof of Theorem \ref{Thm:nuclear}}\label{App-Thm:nuclear}

Before proof, we restate some definitions and lemmas here.

\begin{defi}\cite{Lew95}
A function $F: \mathbb{R}^{m \times n} \to[-\infty,+\infty]$ is called unitarily invariant if $F(U C V)=F(C)$ for any $C \in \mathbb{R}^{m \times n}$, where $U \in \mathbb{R}^{m \times m}, V\in \mathbb{R}^{n \times n}$ are arbitrary unitary matrices.
\end{defi}

\begin{defi}
	A function $f: \mathbb{R}^n \to[-\infty,+\infty]$ is called absolutely symmetric if
	$f(\bm{\hat c})=f(\bm{c})$ for any $ \bm{c} \in \mathbb{R}^n$, where $\bm{\hat c}$ is the vector with components $\left|c_i\right|$ arranged in a non-ascending order.
\end{defi}

\begin{lemma}\label{lem:ui}\cite[Proposition 2.2]{Lew95}
If the function $F: \mathbb{R}^{m \times n} \rightarrow$ $[-\infty,+\infty]$ is unitarily invariant, then there exists a absolutely symmetric function $f: \mathbb{R}^{\min\left\lbrace m,n \right\rbrace } \rightarrow[-\infty,+\infty]$ such that $F\left(X\right)=f \circ \sigma\left(X\right)$.
\end{lemma}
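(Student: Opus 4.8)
The plan is to construct the function $f$ explicitly from $F$ and then verify the two properties the statement demands. First I would set $q=\min\{m,n\}$ and define $f:\mathbb{R}^{q}\to[-\infty,+\infty]$ by $f(\bm{c}):=F(\Sigma(\bm{c}))$, where $\Sigma(\bm{c})\in\mathbb{R}^{m\times n}$ denotes the rectangular ``diagonal'' matrix whose $(i,i)$ entry equals $c_i$ for $i=1,\dots,q$ and whose remaining entries vanish. This is the only natural candidate: absolute symmetry will force $f$ to be determined by its restriction to the nonincreasing nonnegative orthant, and there its values are pinned down by $F$ evaluated on diagonal matrices.

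The representation $F=f\circ\sigma$ then follows immediately from the singular value decomposition. Any $X\in\mathbb{R}^{m\times n}$ factors as $X=U\,\Sigma(\sigma(X))\,V^{\top}$ with $U\in\mathbb{R}^{m\times m}$ and $V\in\mathbb{R}^{n\times n}$ orthogonal and $\sigma(X)$ the singular value vector listed in nonincreasing order. Applying unitary invariance with these $U$ and $V^\top$ gives $F(X)=F(\Sigma(\sigma(X)))=f(\sigma(X))$, so this step is essentially free once $f$ has been defined.

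The substantive work is showing that $f$ is absolutely symmetric, i.e.\ $f(\bm{\hat c})=f(\bm{c})$ where $\bm{\hat c}$ lists $|c_1|,\dots,|c_q|$ in nonascending order. I would reduce this to two elementary moves, each realized by orthogonal conjugation. For a sign change of $c_i$, note that flipping the sign of the $i$-th row of $\Sigma(\bm{c})$ is achieved by left multiplication by the orthogonal diagonal matrix $D\in\mathbb{R}^{m\times m}$ equal to $-1$ in the $i$-th slot and $+1$ elsewhere; orthogonality of $D$ and unitary invariance give $f(\bm{c})$ unchanged. For a transposition of $c_i$ and $c_j$ (with $i,j\le q$), I would multiply $\Sigma(\bm{c})$ on the left by the row-swap permutation matrix $P\in\mathbb{R}^{m\times m}$ and on the right by the column-swap permutation matrix $P'\in\mathbb{R}^{n\times n}$, giving $P\,\Sigma(\bm{c})\,P'=\Sigma(\bm{c}')$ with $\bm{c}'$ obtained from $\bm{c}$ by exchanging the two entries; since $P$ and $P'$ are orthogonal, unitary invariance yields $f(\bm{c}')=f(\bm{c})$. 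Composing arbitrary sign flips with arbitrary transpositions shows $f$ is invariant under all sign changes and all coordinate permutations, which is exactly absolute symmetry.

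The main obstacle I anticipate is the rectangular bookkeeping rather than any deep idea: one must check that all index pairs used in defining $\Sigma(\bm{c})$, $D$, $P$ and $P'$ lie in the admissible range $\{1,\dots,q\}$, and that the left and right factors act consistently so that their net effect on the diagonal band is precisely the claimed coordinate operation. Once the convention for $\sigma(X)$ (nonnegative entries, nonincreasing order) is matched to the convention defining $\bm{\hat c}$, the whole argument reduces to orthogonality of permutation and sign matrices together with the SVD, with no serious analytic difficulty.
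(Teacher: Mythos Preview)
The paper does not give its own proof of this lemma: it is quoted verbatim from \cite[Proposition~2.2]{Lew95}, and the \texttt{proof} environment that appears immediately after it in Appendix~B is actually the proof of Theorem~\ref{Thm:nuclear} (showing that $G_2^{\phi}$ is unitarily invariant so that the lemma applies), not a proof of the lemma itself.

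Your argument is correct and is precisely the standard one found in Lewis's original paper: define $f(\bm{c}):=F(\Sigma(\bm{c}))$, obtain $F=f\circ\sigma$ from the SVD and unitary invariance, and check absolute symmetry by realizing sign changes and coordinate transpositions as left/right multiplication by orthogonal (diagonal sign and permutation) matrices. The only point worth noting is terminological: in this paper ``unitary'' means real orthogonal, which you have used correctly. There is no gap.
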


\begin{proof}
For any $C \in \mathbb{R}^{m \times n}$, one has
\begin{equation*}
	\begin{aligned}
		G_{2}^{\phi}\left(UCV\right)&=\mathop{\min} \limits_{UCV=XY^T} \frac{1}{2}\sum_{i=1}^{s}n_i\left(\phi\left(\left\|X_i\right\|_F\right)+\phi\left(\left\|Y_i\right\|_F\right)\right)\\
		&=\mathop{\min} \limits_{C=\left(U^TX\right)\left(VY\right)^T} \frac{1}{2}\sum_{i=1}^{s}n_i\left(\phi\left(\left\|U^TX_i\right\|_F\right)+\phi\left(\left\|VY_i\right\|_F\right)\right)\\
		&=\mathop{\min} \limits_{C={\bar X}{\bar Y}^T} \frac{1}{2}\sum_{i=1}^{s}n_i\left(\phi\left(\left\|{\bar X}_i\right\|_F\right)+\phi\left(\left\|{\bar Y}_i\right\|_F\right)\right)\\
		&=G_{2}^{\phi}\left(C\right)
	\end{aligned}
\end{equation*}
for arbitrary unitary matrices $U\in\R^{m\times m}$ and $V\in\R^{n\times n}$,
which encounters the expectation in Lemma \ref{lem:ui} and thus completes the proof.
\end{proof}

\end{document}